


\documentclass[final,3p,times]{elsarticle}


\usepackage{amssymb}
\usepackage{amsthm,bm}
\usepackage{multirow}
\usepackage{amsmath}

\usepackage{color}
\usepackage{hyperref}
\usepackage{lineno}
\usepackage{soul}

\newcommand{\Vper}{V_\mathrm{per}(\Omega^\varepsilon)}

\newcommand{\app}{{\mathrm{app}}}

\newcommand{\pd}[2]{\frac{\partial{#1}}{\partial{#2}}}

\newcommand{\bl}{{\mathrm{bl}}}
\newcommand{\FF}{{\mathrm{ff}}}
\newcommand{\PM}{{\mathrm{pm}}}
\newcommand{\lrp}[1]{\left( #1 \right)}
\newcommand{\vecee}[1]{{\ensuremath{\mathbf{ #1}}}}
\renewcommand{\vecee}[1]{{\ensuremath{\boldsymbol{\mathrm #1}}}}

\newcommand{\ten}[1]{\ensuremath{\mathbf{#1}}}

\newcommand{\vdot}{\boldsymbol{\mathsf{\ensuremath\cdot}}}
\newcommand{\del}{\ensuremath{\nabla}}
\newcommand{\deld}{\ensuremath{\del\vdot}}
\newcommand{\RR}{\mathbb{R}}
\renewcommand{\div}{\ensuremath{\del\vdot}}
\newcommand{\divy}{\ensuremath{\del_{\vecee y}\vdot}}

\newcommand{\dvdto}{\frac{\partial}{\partial x_1} \frac{\partial v_1^\FF}{\partial x_2}\bigg|_\Sigma}
\newcommand{\dvdtoo}{\frac{\partial^2}{\partial x_1^2} \frac{\partial v_1^\FF}{\partial x_2}\bigg|_\Sigma}

\newcommand{\Omepspm}{\Omega^\varepsilon_\PM}
\newcommand{\Omeps}{\Omega^\varepsilon}

\usepackage{mathtools}
\DeclarePairedDelimiter{\norm}{\lVert}{\rVert}

\newcommand{\normphipm}{\| \vecee \varphi \|_{L^2(\Omega_\PM^\varepsilon)^2}}
\newcommand{\normgradphi}{\| \nabla \vecee \varphi \|_{L^2(\Omeps)^{2 \times 2}}}
\newcommand{\normgradphipm}{\| \nabla \vecee \varphi \|_{L^2(\Omepspm)^{2 \times 2}}}

\newcommand{\dpxij}{\frac{\partial^2 p^\PM}{\partial x_i x_j}\bigg|_\Sigma}
\newcommand{\dpxijdom}{\frac{\partial^2 p^\PM}{\partial x_i x_j}}

\newcommand{\dpxj}{\frac{\partial p^\PM}{\partial x_j}\bigg|_\Sigma}
\newcommand{\dpxjH}{\frac{\partial p^\PM}{\partial x_j}(x_1, -H)}

\newcommand{\intvel}[3]{ 
 \int_{\Omeps}  \nabla \left( {#1} {#2} {#3}  \right) \colon \nabla  \vecee \varphi}
\newcommand{\intpmvel}[3]{ 
 \int_{\Omepspm}  \nabla \left( {#1} {#2} {#3}  \right) \colon \nabla  \vecee \varphi}

\newcommand{\intpmp}[3]{ 
 \int_{\Omepspm}  \left( {#1} {#2} {#3}  \right) \div  \vecee \varphi}

\newcommand{\intp}[3]{ 
 \int_{\Omeps}  \left( {#1} {#2} {#3}  \right) \div  \vecee \varphi 
}

\newcommand{\intotim}[3]{2 \int_{\Omeps} {#1} \left( \left( {#2} \right) \otimes \nabla {#3} \right) \colon \nabla \vecee \varphi}

\newcommand{\intdel}[3]{
\int_{\Omeps} {#1}\left( \left( {#2} \right) \Delta {#3}  \right)  \vdot  \vecee \varphi
}

\newcommand{\intpff}[3]{
 \int_{\Omega_\FF} {#1}\left( \left( {#2}  \right) \pd{}{x_1} {#3} \right) \varphi_1
}

\newcommand{\intppm}[3]{
\int_{\Omepspm} {#1}\left(  {#2}  \pd{}{x_1} {#3}\right)   \varphi_1
}

\newcommand{\intpeps}[3]{
\int_{\Omeps} {#1}\left( \left( {#2}  \right) \pd{}{x_1} {#3} \right) \varphi_1
}


\usepackage{stmaryrd}
\usepackage{amsthm}

\newtheorem{theorem}{Theorem}
\newtheorem{remark}{Remark}
\newtheorem{corollary}{Corollary}

\begin{document}

\begin{frontmatter}

\title{Higher-order coupling conditions for arbitrary flows \\ in Stokes--Darcy systems}

\author[1]{Elissa Eggenweiler}\ead{elissa.eggenweiler@ians.uni-stuttgart.de}

\author[1]{Iryna Rybak\corref{cor2}}\ead{iryna.rybak@ians.uni-stuttagrt.de}

\address[1]{Institute of Applied Analysis and Numerical Simulation, 
University of Stuttgart \\ Pfaffenwaldring 57, 70569 Stuttgart, Germany}

\cortext[cor2]{Corresponding author}

\begin{abstract}
The choice of interface conditions for coupling free-flow and porous-medium flow systems is crucial in order to obtain accurate coupled flow models and precise numerical simulation results. 
Typically, the Stokes equations are considered in the free-flow region, Darcy's law is applied in the porous medium, and traditional coupling conditions (conservation of mass, balance of normal forces, the Beavers--Joseph condition on tangential velocity) are set on the interface.
However, these traditional conditions are applicable to flows parallel to the fluid--porous interface only. 
Recently, we derived generalized interface conditions accounting for arbitrary flow directions to the porous layer using homogenization and boundary layer theory.
We validated these conditions numerically and demonstrated that they are more accurate than the traditional coupling conditions. However, error estimates have not been derived yet.
In this paper, we extend the generalized coupling conditions and prove rigorous error estimates for the homogenization result.
All effective parameters appearing in the developed higher-order interface conditions are computed numerically based on the pore geometry. 
We validate the derived conditions by comparing numerical simulation results for the coupled Stokes--Darcy model and the pore-scale resolved model. 
Moreover, we compare the new coupling conditions to the traditional as well as generalized interface conditions and highlight the importance of the additional higher-order terms appearing in the derived coupling concept. 
\end{abstract}

\begin{keyword}
Stokes equations \sep Darcy's law \sep coupling conditions \sep homogenization \sep boundary layer theory

\MSC[2020] 35Q35 \sep 76D07 \sep 76M10 \sep 76M50 \sep 76S05

\end{keyword}

\end{frontmatter}


\biboptions{sort&compress}

\section{Introduction}
\label{sec:intro}

Coupled systems containing a free-flow region and a porous medium are prevalent, both in industrial applications as well as in biological and environmental settings. Examples include the gas/water management in PEM fuel cells~\cite{Gurau_Mann_2009}, 
blood flows through human tissues and vessels~\cite{Smith_Humphrey_2007} and surface water/groundwater flows~\cite{Weishaupt_etal_2019}.
Pore-scale resolved simulations of such coupled flow systems are computationally demanding and often not feasible for practical applications. Therefore, macroscale models treating the coupled fluid--porous system as two different continua divided by a sharp interface are commonly used. 
Selecting appropriate coupling conditions at the interface is essential for macroscopic modeling of coupled fluid flow systems to ensure the attainment of physically consistent and reliable numerical simulation results. 

In the literature, diverse model formulations are available to describe fluid flows in coupled systems involving both free-flow and porous-medium flow, with the choice depending on the particular flow regime and the specific application under consideration. In the broadest scenario, the free-flow domain employs the Navier--Stokes equations, while the subsurface flow is described using multiphase Darcy’s law~\cite{Helmig_97}. For coupled problems, particularly when viscous forces play a predominant role in fluid flow, the Navier--Stokes equations simplify to the Stokes equations. 
When the vertical length scale significantly differs from the horizontal one, the shallow water equations are utilized in the free-flow region~\cite{Vreugdenhil_1994}. 
In many applications, it is common to consider the porous medium as being completely saturated with the fluid present in the free-flow region, and the single-phase Darcy law~\cite{Darcy_1856} is applied in the porous medium. 
Alternative models describing the fluid flow in porous media are the Brinkman equation~\cite{Brinkman_47} that is used in case of high porosity ($\phi > 0.95$), the Forchheimer law~\cite{Forchheimer_1901} that describes porous-medium flow in case of high velocities, or the Richards equation~\cite{Richards_31} that is applied in case of unsaturated porous media.

In this work, we study the Stokes--Darcy problem which is typically used for mathematical modeling, numerical analysis, as well as the development of efficient solution strategies, e.g.,~\cite{Angot_etal_17,Beavers_Joseph_67,Discacciati_GerardoGiorda_18,Eggenweiler_Rybak_MMS20,Jaeger_Mikelic_96,Lacis_Bagheri_17}.
%
The classical conditions for coupling the Stokes and Darcy equations across the interface
are the continuity of normal velocity across the interface~\eqref{eq:IC-classical-mass}, the balance of normal forces~\eqref{eq:IC-classical-momentum} and the Beavers--Joseph condition~\eqref{eq:IC-classical-BJJ} on the tangential component of velocity
\begin{alignat}{2}
    \label{eq:IC-classical-mass}
    \vecee v^\FF \vdot \vecee n &= \vecee v^\PM \vdot \vecee n \qquad &&\text{on} 
    \;\Sigma \, ,
    \\
    \label{eq:IC-classical-momentum}
    p^\PM &=-\vecee n \vdot 
    ( \nabla \vecee v^\FF - p^\FF \ten I ) 
    \vecee n 
    \qquad &&\text{on} \; \Sigma\, ,
    \\
    \label{eq:IC-classical-BJJ}(\vecee v^\FF - \vecee v^\PM ) \vdot \vecee \tau
    &= \alpha^{-1}\,\sqrt{K}
    \vecee \tau \vdot \nabla \vecee v^\FF 
    \vecee n \qquad &&\text{on} \; \Sigma \, .
\end{alignat}
Here, $\{\vecee v^i, p^i\}$, $i \in \{\FF, \PM\}$ is the velocity and pressure describing the free flow ($\FF$) 
and porous-medium flow ($\PM$), $\ten I$ is the identity tensor, $\alpha>0$ is the Beavers--Joseph parameter, $K$~is a characteristic permeability~\citep{Eggenweiler_Rybak_20}
and $\vecee n$, $\vecee \tau$ are the unit normal 
and tangential vectors at the fluid--porous interface $\Sigma$ (figure~\ref{fig:setting}, left).
Note that conditions~\eqref{eq:IC-classical-mass}--\eqref{eq:IC-classical-BJJ} are provided in their non-dimensional form.
The classical set of interface conditions~\eqref{eq:IC-classical-mass}--\eqref{eq:IC-classical-BJJ} is valid for parallel flows to the porous medium~\citep{Beavers_Joseph_67,Jones_73,Nield_09,Saffman} but unsuitable for arbitrary flow directions at the fluid--porous interface~\citep{Eggenweiler_Rybak_20,Strohbeck_etal_2021,Sudhakar_etal_2021}. 
Besides the classical interface conditions, several other coupling concepts have been proposed in the literature.  
However, most of these coupling conditions are limited in their applicability, i.e., some require the determination of unknown model parameters~\citep{Angot_2010}, while others are still restricted to specific flow directions~\citep{Carraro_etal_15,Jaeger_etal_01,Jaeger_Mikelic_00}.

Alternative interface conditions recently developed in~\citep{Eggenweiler_Rybak_MMS20,Sudhakar_etal_2021} are not restricted to unidirectional flows (either parallel or perpendicular) to the interface and do not contain undetermined parameters.
The conditions proposed by~\cite{Sudhakar_etal_2021} are obtained using formal multiscale homogenization and are shown to be valid for the lid-driven cavity flow over a porous bed. 
The generalized interface conditions, derived using homogenization and boundary layer theory in our previous work~\citep{Eggenweiler_Rybak_MMS20}, are numerically validated for various flow problems with arbitrary flow direction at the fluid--porous interface~\citep{Eggenweiler_Rybak_MMS20,Strohbeck_etal_2021}. These coupling conditions in the non-dimensional form read
\begin{alignat}{2}
    \vecee v^\FF \vdot \vecee n &= \vecee v^\PM \vdot \vecee n \quad &&\text{on } \Sigma \, , \label{eq:ER-mass}
    \\
    p^\PM &= 
    -\vecee n \vdot 
    ( \nabla \vecee v^\FF - p^\FF \ten I ) 
    \vecee n
    + N_s^{\bl}  \vecee \tau \vdot \nabla \vecee v^\FF 
    \vecee n \hspace{3.5mm}  &&\text{on } \Sigma \, , \label{eq:ER-momentum}
    \\
    \vecee v^\FF \vdot \vecee \tau &= 
    - \varepsilon N_1^{\bl}  \vecee  \tau \vdot 
    \nabla \vecee v^\FF
    \vecee n 
    + \varepsilon^2 \sum_{j=1}^2 \vecee M^{j,\bl} \frac{\partial p^\PM}{\partial x_j}
    \vdot \vecee \tau  \quad &&\text{on } \Sigma \, , \label{eq:ER-tangential}
\end{alignat}
where $\varepsilon$ is the characteristic pore size (figure~\ref{fig:setting}, left), $N_s^\bl$, $N_1^\bl$ and $\vecee M^{j,\bl}$, $j=1,2$ are parameters that are computed based on the geometrical configuration of the porous medium (see~\ref{appendix:cell-and-boundary-layer-problems}).
However, the justification of the homogenization result~\eqref{eq:ER-mass}--\eqref{eq:ER-tangential} through rigorous error estimates has remained an unresolved issue. We address this issue in our current work. 
In this paper, we develop a novel set of coupling conditions that extends interface conditions~\eqref{eq:ER-mass}--\eqref{eq:ER-tangential} from~\citep{Eggenweiler_Rybak_MMS20} by additional higher-order terms w.r.t. the scale separation parameter. 
The newly developed coupling conditions are derived for arbitrary flow directions to the interface using periodic homogenization and boundary layer theory.
Under an assumption on the regularity and uniform boundedness of the free-flow velocity and the porous-medium pressure, we prove rigorous error estimates that justify the obtained higher-order interface conditions.
Moreover, we demonstrate the suitability of the novel set of interface conditions for arbitrary flow directions at the fluid--porous interface and highlight the significance of the additional higher-order terms.

The paper is organized as follows. The problem setting is described in section~\ref{sec:setting}, where we present the geometrical description of the coupled system, the assumptions regarding the flow system, and the microscopic as well as macroscopic flow models. 
The main results are provided in section~\ref{sec:main-results} which include the newly developed coupling conditions and the derived error estimates.
The rigorous derivation of coupling conditions and error estimates for the model approximation is presented in section~\ref{sec:derivation}.
The developed interface conditions are validated by comparison of pore-scale resolved and macroscale numerical simulation in section~\ref{sec:numerics}. The conclusions follow in section~\ref{sec:conclusion}.

\begin{figure}
    \centering
    \includegraphics[width=0.85\textwidth]{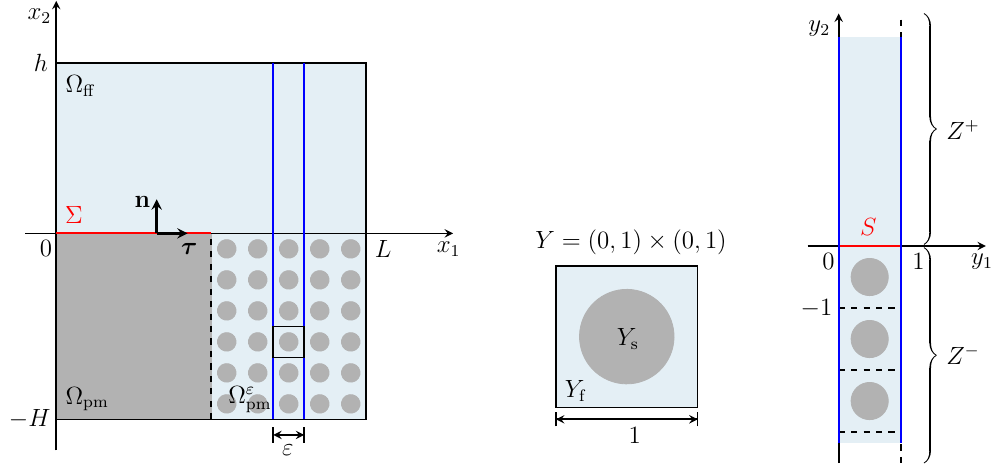}
    \caption{Coupled flow domain (left) at the macroscale (left part of domain) and at the pore scale (right part of domain), unit cell $Y=(0,1)\times (0,1)$ (middle), and the infinite long boundary layer stripe $Z^\bl = Z^+ \cup S \cup Z^-$ (right).}
    \label{fig:setting}
\end{figure}

\section{Problem setting}
\label{sec:setting}
In this section, we first introduce the geometrical setting and present the assumptions on the flow, the fluid, and the porous medium. Then, we provide the microscopic flow model 
and the macroscopic models in the free-flow and porous-medium domain
. 

\subsection{Definition of geometry and assumption on the flow system}
\label{sec:assumptions}
From the macroscale perspective, we consider the coupled domain $\RR^2\supseteq \Omega = \Omega_\FF \cup \Sigma \cup \Omega_\PM$ consisting of the free-flow region~$\Omega_\FF = (0,L) \times (0,h)$, the sharp interface~$\Sigma=(0,L) \times \{0\}$, and the porous medium~$\Omega_\PM = (0,L) \times (-H,0)$ (figure~\ref{fig:setting}, left).
The unit normal vector on the horizontal fluid--porous $\Sigma$ is $\vecee n = \vecee e_2$ and the unit tangential vector is~$\vecee \tau = \vecee e_1$.

From the pore-scale perspective, the entire flow domain~$\RR^2\supseteq \Omega^\varepsilon = \Omega_\FF \cup \Sigma \cup \Omega^\varepsilon_\PM$ comprises the free-flow region~$\Omega_\FF$, the interface $\Sigma$, and the pore space~$\Omega_\PM^\varepsilon \subset \RR^{2}$ of the porous medium. 
We consider the coupled flow region which satisfies $\frac{L}{\varepsilon}, \frac{H}{\varepsilon} \in \mathbb{N}$, where $\varepsilon$ is the characteristic pore size (figure~\ref{fig:setting}, left).
For the construction of the porous-medium domain, we follow~\cite{Hornung_97,Jaeger_Mikelic_96}.
We assume that the porous medium is built by the periodic repetition of the scaled unit cell $Y^\varepsilon = \varepsilon Y = (0,\varepsilon) \times (0,\varepsilon)$, which consists of the fluid part $Y_\mathrm{f}^\varepsilon= \varepsilon Y_\mathrm{f}$ and the solid part $Y_\mathrm{s}^\varepsilon=\varepsilon Y_\mathrm{s}$ (figure~\ref{fig:setting}, middle).

In this work, we study the steady-state, laminar ($Re \ll 1$), single-phase flow of an incompressible fluid. We assume that the fluid has constant viscosity, fully saturates the pore space $\Omepspm$ of the porous medium, and contains only one chemical species. 
Moreover, the coupled system is supposed to be isothermal, the solid obstacles are impermeable, and the porous material is non-deformable.

\subsection{Microscopic flow model}

Under the prescribed assumptions in section~\ref{sec:assumptions}, we consider the following non-dimensional Stokes equations to describe the fluid flow in the entire flow domain~$\Omega^\varepsilon$:
\begin{equation} \label{eq:pore-scale}
    \begin{aligned}
    - \Delta   \vecee{v}^\varepsilon+ \nabla p^\varepsilon &= \vecee 0 \, , \quad 
    \deld \vecee{v}^{\varepsilon} = 0 \quad \textnormal{ in } \Omega^{\varepsilon} \, ,
    \quad \ \int_{\Omega_\FF} p^\varepsilon \ \mathrm{d} \vecee x = 0 \, , 
     \\
    \vecee v^{\varepsilon} &= \vecee 0 \quad \textnormal{on } \partial \Omega^{\varepsilon} \setminus \partial \Omega \, ,    
    \quad 
    \{\vecee{v}^{\varepsilon}, p^{\varepsilon} \}  \textnormal{ is $L$-periodic in $x_1$} \, ,
    \\[1ex]
    \vecee{v}^{\varepsilon} &= (v_1^\mathrm{in}(x_1),0)^\top \quad \textnormal{on } \Gamma_h:=(0,L) \times \{h\} 
    \, , 
    \\
    v_2^{\varepsilon} &= \frac{\partial v_1^\varepsilon}{\partial x_2} = 0 \hspace{5.9ex} \textnormal{on } \Gamma_H := (0,L) \times \{-H\} 
    \, ,
    \end{aligned}
\end{equation}
where $\vecee{v}^\varepsilon=(v_1^\varepsilon, v_2^\varepsilon)^\top$ and $p^\varepsilon$ denote the fluid velocity and pressure, and $v_1^\mathrm{in} \neq 0 
$ is a prescribed inflow velocity. System~\eqref{eq:pore-scale} is originally formulated in~\citep{Eggenweiler_Rybak_MMS20}, on which the present work is based, and describes a coupled system where the flow direction is arbitrary at the fluid--porous interface~$\Sigma$.

\subsection{Macroscopic flow models}

At the macroscale, we model the fluid flow in the free-flow region~$\Omega_\FF$ by the Stokes equations in their non-dimensional form
\begin{equation}
    \begin{array}{ccc}
    & - \Delta   \vecee{v}^\FF+ \nabla p^\FF = \vecee 0 \, , \quad 
    \deld \vecee{v}^{\FF} = 0 \quad \textnormal{ in } \Omega_\FF \, ,
    \quad \ \int_{\Omega_\FF} p^\FF \ \mathrm{d} \vecee x = 0 \, ,     
    \label{eq:macro-FF}
    \\[1ex]
    &
    \{\vecee{v}^{\FF}, p^{\FF} \}  \textnormal{ is $L$-periodic in $x_1$} \, ,
    \, \quad
      \vecee{v}^{\FF} = (v_1^\mathrm{in}(x_1),0)^\top \quad \textnormal{on } \Gamma_h 
      \, ,  
    \end{array}
\end{equation}
and in the porous medium~$\Omega_\PM$ by the non-dimensional Darcy flow equations
\begin{equation}
    \begin{array}{ccc}
    & \vecee{v}^\PM= - \ten K^\varepsilon \nabla p^\PM \, , \quad 
    \deld \vecee{v}^{\PM} = 0 \quad \textnormal{ in } \Omega_\PM \, ,
    \label{eq:macro-PM}
    \\[1ex]
    &
    p^{\PM} \textnormal{ is $L$-periodic in $x_1$} \, ,
    \, \quad
      v_2^{\PM} =0 \quad \textnormal{on } \Gamma_H 
      \, . 
        \end{array}
\end{equation}
Here, $\vecee v^i = (v_1^i, v_2^i)^\top$ and $p^i$ for $i \in \{\FF, \PM\}$ are the velocity and pressure in the free-flow region and the porous medium, $\ten K^\varepsilon = \varepsilon^2 \ten K$ is the scaled permeability tensor, and $v_1^\mathrm{in}$ is the same inflow velocity as in problem~\eqref{eq:pore-scale}. The entries of the permeability tensor~$\ten K$ are defined in the standard way, e.g.,~\cite[eq.~(1.5)]{Hornung_97}.

To obtain a complete macroscale model formulation, coupling conditions on the fluid--porous interface~$\Sigma$ need to be specified.
In this work, we derive higher-order interface conditions that are valid for arbitrary flows at the fluid--porous interface. These conditions are presented in the following section and their derivation is provided in section~\ref{sec:derivation}.

\section{Main results: Higher-order interface conditions and error estimates}
\label{sec:main-results}
In this section, we provide the main results which are the higher-order interface conditions and the rigorous error estimates for the model derived in section~\ref{sec:derivation}.

\subsection{Higher-order interface conditions}
We provide the generalized higher-order interface conditions for the Stokes--Darcy problem~\eqref{eq:macro-FF}--\eqref{eq:macro-PM} 
considering the horizontal interface $\Sigma$ taking $\vecee n = \vecee e_2$ and $\vecee \tau =\vecee e_1$ (figure~\ref{fig:setting}, left). The conditions read
\begin{eqnarray}
    v_2^\FF     &=&  v_2^\PM  
    -\varepsilon^2 W^\bl \dvdto  
    \, ,
    \label{eq:IC-HO-normal}
    \\
    p^\PM &=& p^\FF - \frac{\partial v_2^\FF}{\partial x_2}\bigg|_\Sigma 
    + N_s^\bl \frac{\partial v_1^\FF}{\partial x_2}\bigg|_\Sigma
     - \varepsilon \sum_{j=1}^2 M_\omega^{j,\bl} \frac{\partial p^\PM }{\partial x_j}\bigg|_\Sigma
     + \varepsilon \left( L_\eta^\bl
     + E_b^\bl +  N_1^\bl \right)  \dvdto  
     \, ,
     \label{eq:IC-HO-p}
     \\
     v_1^\FF &=&  - \varepsilon N_1^\bl  \frac{\partial v_1^\FF}{\partial x_2}\bigg|_\Sigma 
    +\varepsilon^2 \sum_{j=1}^2 M_1^{j,\bl} \frac{\partial p^\PM }{\partial x_j}\bigg|_\Sigma 
    -\varepsilon^2 \left( E_1^\bl  +  L_1^\bl \right)  \dvdto 
    \label{eq:IC-HO-tangential}
    \, ,
\end{eqnarray}
where~$N_1^\bl$,~$M_1^{j,\bl}$, $E_1^\bl$, $L_1^\bl$, $W^\bl$ $N_s^\bl$, $M_\omega^{j,\bl} $, $L_\eta^\bl$ and $E_b^\bl$ for $j=1,2$ are boundary layer constants defined in~\eqref{eq:BLP-constants-L},~\eqref{eq:BLP-constants-E},~\eqref{eq:BLP-constants-N},~\eqref{eq:BLP-constants-M} and~\eqref{eq:BLP-constant-W}.
All constants appearing in conditions~\eqref{eq:IC-HO-normal}--\eqref{eq:IC-HO-tangential} are computed numerically based on the pore geometry near the fluid--porous interface (see section~\ref{sec:numerics}). 

\begin{remark}
    Interface conditions~\eqref{eq:IC-HO-normal}--\eqref{eq:IC-HO-tangential} are of similar form as the conditions derived by~\cite{Sudhakar_etal_2021} although they used a different averaging technique to derive their coupling conditions.
\end{remark}

\subsection{Error estimates}
Coupling conditions~\eqref{eq:IC-HO-normal}--\eqref{eq:IC-HO-tangential} are derived from the approximation of the pore-scale velocity and pressure that we construct in section~\ref{sec:derivation} using homogenization and boundary layer theory.
The corresponding model approximation error is quantified by the functions $\{ \vecee U^{10,\varepsilon}, P^{10,\varepsilon} \}$ defined in~\eqref{eq:error-10-U} and~\eqref{eq:error-10-P}. The error functions describe the difference between the pore-scale solution~$\{ \vecee v^\varepsilon, p^\varepsilon\}$ and our constructed approximation.

To derive rigorous error estimates for the proposed pore-scale approximation in section~\ref{sec:derivation}, we need additional assumptions on the regularity and uniform boundedness of the Stokes velocity $\vecee v^\FF$ and the Darcy pressure $p^\PM$. These assumptions are summarized in Remark~\ref{rem}.
\begin{remark}\label{rem}
In this work we assume that the derivatives
$\partial^2 v_k^\FF/\partial x_1 \partial x_2$, $\partial^3 p^\PM/\partial x_i^2 \partial x_j$ exist 
for  $i,j,k=1,2$ and satisfy the following uniform bounds w.r.t. $\varepsilon>0$ for $C>0$: 
\begin{eqnarray} \label{eq:assumption}
 \bigg\| \frac{\partial^2 v_k^\FF}{\partial x_1 \partial x_2} \bigg\|_{L^\infty(\Sigma)} \leq C \, ,
   \qquad \
   \bigg\| \frac{\partial^2 p^\PM}{\partial x_i \partial x_j} \bigg\|_{L^\infty(\Sigma)} \leq C \, ,
   \qquad \
   \bigg\| \frac{\partial^3 p^\PM}{\partial x_i^2 \partial x_j} \bigg\|_{L^\infty(\Omega_\PM^\varepsilon)} \leq C \, . 
\end{eqnarray}
\end{remark}
Note that similar regularity assumptions are also used for the rigorous derivation of interface conditions for the forced infiltration in~\citep{Carraro_etal_15}. 

\begin{theorem}
\label{theo}
Let us suppose the geometry as described in section~\ref{sec:setting} and let $\{\vecee U^{10,\varepsilon}, P^{10,\varepsilon}\}$ be defined in~\eqref{eq:error-10-U} and~\eqref{eq:error-10-P}. 
Moreover, we denote the extension of $\{  \vecee U^{10,\varepsilon},  P^{10,\varepsilon} \}$ defined in~$\Omeps$ to the macroscopic domain $\Omega$ by $\{ \widetilde{\vecee U}^{10,\varepsilon}, \widetilde P^{10,\varepsilon} \}$. Hereby, $\vecee U^{10,\varepsilon}$ is extended to zero in $\Omega \setminus \Omeps$ and $P^{10,\varepsilon}$ is extended in the standard way as presented in~\eqref{eq:pressure-extension}. 
Then, under the assumptions given in Remark~\ref{rem}, we have
\begin{align}
    \norm{\nabla \widetilde{\vecee U}^{10,\varepsilon}}_{L^2(\Omega)^{{2\times2}}} &\leq C \varepsilon^{5/2} \, , \label{eq:estimate-1}
    \\
    \norm{\widetilde{\vecee U}^{10,\varepsilon}}_{L^2(\Omega_\PM)^{{2}}} &\leq C \varepsilon^{7/2} \, ,
    \\
     \norm{\widetilde{\vecee U}^{10,\varepsilon}}_{L^2(\Omega_\FF)^{{2}}} 
    &\leq  C \varepsilon^3 \, , 
    \\
    \norm{\widetilde{\vecee U}^{10,\varepsilon}}_{L^2(\Sigma)^{{2}}} 
    &\leq  C \varepsilon^3 \, , 
    \\
    \norm{\widetilde P^{10,\varepsilon}}_{L^2(\Omega)} 
    &\leq  C \varepsilon^{3/2} \, .
    \label{eq:estimate-5}
 \end{align}
\end{theorem}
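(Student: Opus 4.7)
The plan is to adapt the classical two-scale energy method for Stokes flow through partially perforated domains (in the style of Jaeger--Mikelic) to the extended asymptotic expansion underlying conditions~\eqref{eq:IC-HO-normal}--\eqref{eq:IC-HO-tangential}. By construction, $\{\vecee U^{10,\varepsilon}, P^{10,\varepsilon}\}$ is the discrepancy between the pore-scale solution $\{\vecee v^\varepsilon, p^\varepsilon\}$ and our approximation, so it satisfies a Stokes-type system on $\Omeps$ whose data are the residuals of the expansion. The first step is therefore to derive these equations explicitly: inserting the approximation into~\eqref{eq:pore-scale} produces a body force $\vecee F^\varepsilon$ in the momentum equation, a divergence defect $G^\varepsilon$, and traces on $\Sigma$, $\Gamma_h$, $\Gamma_H$ that are engineered to cancel to the order prescribed by the higher-order interface conditions. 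The cell problems remove the periodic bulk contributions, the boundary layer correctors kill the leading interfacial jumps, and the new higher-order terms in~\eqref{eq:IC-HO-normal}--\eqref{eq:IC-HO-tangential} absorb the next order.

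Next, I would carry out the energy estimate. Constructing an $H^1$ lifting $\vecee \psi$ of the divergence defect $G^\varepsilon$ so that $\vecee U^{10,\varepsilon}-\vecee \psi$ is divergence free and vanishes on $\partial\Omeps \setminus \partial\Omega$, I test the momentum equation against $\vecee U^{10,\varepsilon}-\vecee \psi$. This yields
\begin{equation*}
\norm{\nabla \vecee U^{10,\varepsilon}}_{L^2(\Omeps)^{2\times 2}}^{2} \leq |\text{bulk remainder}| + |\text{interface and boundary remainders}|.
\end{equation*}
The bulk terms are bounded by Cauchy--Schwarz together with the exponential decay of the boundary layer functions away from $\Sigma$, which contributes a $\sqrt{\varepsilon}$ factor each time a boundary layer appears under an $L^2$ norm. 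The interface remainders are controlled by the trace theorem on $\Omega_\FF$ combined with the uniform bounds~\eqref{eq:assumption} from Remark~\ref{rem}, while the traces on $\Gamma_h$, $\Gamma_H$ are exponentially small because the boundary layers have already decayed to their far-field constants there. Tracking powers of $\varepsilon$ then yields~\eqref{eq:estimate-1}.

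The pressure estimate follows from the Necas inequality combined with the restriction operator of Tartar: for the extension $\widetilde P^{10,\varepsilon}$ I choose $\vecee \varphi \in H^1_0(\Omega)^2$ with $\div \vecee \varphi = \widetilde P^{10,\varepsilon}$ and $\norm{\nabla \vecee \varphi}_{L^2(\Omega)^{2\times 2}} \leq C \norm{\widetilde P^{10,\varepsilon}}_{L^2(\Omega)}$, then restrict $\vecee \varphi$ to $\Omeps$ via the Tartar operator (whose gradient in $\Omepspm$ costs a factor of $\varepsilon^{-1}$ relative to its $L^2$-norm). Testing the error momentum equation with this restriction and invoking the gradient bound~\eqref{eq:estimate-1} yields the $\varepsilon^{3/2}$ bound~\eqref{eq:estimate-5}.

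Finally, the refined $L^2$ bounds on subdomains come from Poincare-type inequalities tailored to the geometry. In $\Omepspm$, the error vanishes on the solid obstacles so the Poincare constant scales as $\varepsilon$, which gains one factor of $\varepsilon$ over the gradient bound and delivers the $\varepsilon^{7/2}$ estimate. In $\Omega_\FF$, one exploits the vanishing boundary data on $\Gamma_h$ and the exponential decay of the boundary layers away from $\Sigma$, obtaining an additional $\sqrt{\varepsilon}$; the standard multiplicative trace inequality between $\Omega_\FF$ and $\Sigma$ then gives the corresponding $\Sigma$-bound. The main technical obstacle will be the residual bookkeeping in the first step: enumerating all boundary-layer and cell-problem contributions, verifying that the engineered cancellations in~\eqref{eq:IC-HO-normal}--\eqref{eq:IC-HO-tangential} are sharp, and confirming that the resulting $\vecee F^\varepsilon$, $G^\varepsilon$ and interfacial trace defects are of precisely the size required to close the energy estimate at order $\varepsilon^{5/2}$.
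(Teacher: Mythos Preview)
Your outline matches the paper's approach closely: the energy estimate via testing with the error itself (Corollary in section~\ref{sec:compressibility}), the pressure bound through the Tartar restriction operator (section~\ref{sec:pressure-estimate}, which is exactly your Ne\v{c}as--Tartar step and yields $\|\widetilde P^{10,\varepsilon}\|_{L^2(\Omega)}\le C\varepsilon^{-1}(\|\nabla \vecee U^{10,\varepsilon}\|+C\varepsilon^{5/2})$), and the scaled Poincar\'e inequalities~\eqref{eq:Poincare-estimate} for $\Omega_\PM$ and $\Sigma$. The only cosmetic difference is that you lift the divergence defect $G^\varepsilon$ before testing, whereas the paper tests directly with $\vecee U^{10,\varepsilon}$ and controls the resulting pressure term via the bound $\|\div \vecee U^{10,\varepsilon}\|_{L^2(\Omeps)}\le C\varepsilon^{7/2}$ together with the coupled pressure estimate; both routes close the same way.

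One step in your proposal does not go through as written: the $\varepsilon^3$ bound on $\|\vecee U^{10,\varepsilon}\|_{L^2(\Omega_\FF)}$. The Poincar\'e constant in $\Omega_\FF$ is $O(1)$, so Poincar\'e alone gives only $\varepsilon^{5/2}$; and ``exponential decay of the boundary layers'' is a property of the correctors you have already subtracted off, not of the remainder $\vecee U^{10,\varepsilon}$, so it cannot be invoked to gain an extra $\sqrt{\varepsilon}$ here. The paper is explicit about this: in section~\ref{sec:global-energy-estimates} it obtains only $\|\vecee U^{10,\varepsilon}\|_{L^2(\Omega_\FF)^2}\le C\varepsilon^{5/2}$ from Poincar\'e and remarks that reaching $\varepsilon^3$ requires the theory of very weak solutions (a duality argument as in~\cite{Carraro_etal_15}), which it does not spell out. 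So your plan is correct up to this point, but the last $\sqrt{\varepsilon}$ in $\Omega_\FF$ needs a genuinely different ingredient than the one you name.
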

\begin{proof}
    We prove estimates~\eqref{eq:estimate-1}--\eqref{eq:estimate-5} in section~\ref{sec:global-energy-estimates}.
\end{proof}

\section{Derivation of coupling conditions and error estimates}
\label{sec:derivation}
In this section, we present the derivation of the higher-order interface conditions~\eqref{eq:IC-HO-normal}--\eqref{eq:IC-HO-tangential} and the error estimates~\eqref{eq:estimate-1}--\eqref{eq:estimate-5} for the constructed model. 
The current work is an extension of~\citep{Eggenweiler_Rybak_MMS20}
and uses the results presented therein.
Here, we follow a similar procedure for the derivation of coupling conditions. 
We introduce the space of test functions according to the pore-scale problem~\eqref{eq:pore-scale}:
\begin{align}\label{eq:Vperio}
\Vper = &\{ \vecee{\phi} \in H^1({\Omega^{\varepsilon}})^2: \vecee{\phi}= \vecee 0 \text{ on } \partial \Omega^{\varepsilon} \setminus \partial \Omega, \ \vecee{\phi}= \vecee 0 \text{ on } \Gamma_h, 
\ \ \phi_2=0 \text{ on } \Gamma_H, 
\ \vecee{\phi} \textnormal{ is $L$-periodic in $x_1$} \}\, .
\end{align}
We use the notation $H^1(\Omega^\varepsilon)^2$ for vector-valued functions in two space dimensions, where each component is an element of $H^1(\Omega^\varepsilon)$, and $H^1(\Omega^\varepsilon)^{{2\times2}}$ for the gradients of vector-valued functions. Similar notations are used for $L^2$-norms. 

The main goal of this section is to construct an accurate approximation of the pore-scale solution $\{\vecee v^\varepsilon, p^\varepsilon\}$ using homogenization and boundary layer theory and to derive the corresponding rigorous error estimates. 
The latter are up to now not proven for the approximation proposed in~\citep{Eggenweiler_Rybak_MMS20}. 
To obtain such estimates additional higher-order terms w.r.t.~$\varepsilon$ are needed in the pore-scale approximation.
Thus, the aim of this work is to extend the approximation derived in~\citep{Eggenweiler_Rybak_MMS20} 
such that rigorous error estimates for the new model approximation can be obtained.

In the following, we first provide the pore-scale approximation from~\citep{Eggenweiler_Rybak_MMS20} 
and explain why additional higher-order terms w.r.t. parameter $\varepsilon$ are needed 
(section~\ref{sec:approx-from-MMS}). Then, we construct additional boundary layer correctors that will appear in our new pore-scale approximation, and explain how interface conditions~\eqref{eq:IC-HO-normal}--\eqref{eq:IC-HO-tangential} are derived from the constructed approximation (sections~\ref{sec:additional-BLP}--\ref{sec:compressibility}). 
Finally, we derive estimates for the pressure error function (section~\ref{sec:pressure-estimate}), and prove Theorem~\ref{theo} 
(section~\ref{sec:global-energy-estimates}).

\subsection{Pore-scale approximation from~\cite{Eggenweiler_Rybak_MMS20}}
\label{sec:approx-from-MMS}
In this section, we present the pore-scale approximation proposed in~\citep{Eggenweiler_Rybak_MMS20} for which rigorous error estimates are not obtained.
%
%
After a thorough analysis of our previous work and the findings presented in~\citep{Carraro_etal_15} we came to the conclusion that employing the solution $\{\vecee v^\mathrm{cf}, p^\mathrm{cf}\}$ to~\citep[eqs.~(3.44),~(3.45)]{Eggenweiler_Rybak_MMS20} for the pore-scale approximation is not necessary.
It leads to an undesirable contribution of the velocity approximation 
on the top boundary $\Gamma_h$ that we would need to correct. 
However, the required corrector would then lead to the same problems that were present before the functions $\vecee v^\mathrm{cf}$ and $ p^\mathrm{cf}$ were considered in the pore-scale approximation.
Thus, instead of the original approximations of pore-scale velocity and pressure from~\cite[section 3.2.6]{Eggenweiler_Rybak_MMS20}, we start our work from slightly modified versions, that do not include~$\{\vecee v^\mathrm{cf}, p^\mathrm{cf}\}$:
\begingroup
\allowdisplaybreaks
\begin{align}
    \vecee{v}^{6,\varepsilon}_{\app} = \, & \mathcal{H}(x_2)\vecee{v}^\FF - \mathcal{H}(-x_2) \varepsilon^2 \sum_{j=1}^2 \vecee{w}^{j, \varepsilon} \frac{\partial p^\PM }{\partial x_j}  \notag
    - \varepsilon \left(\vecee t^{\bl,\varepsilon} - \mathcal{H}(x_2) \vecee N^{\bl} \right) \frac{\partial v_1^\FF}{\partial x_2}\bigg|_\Sigma 
    + \varepsilon^2 \sum_{j=1}^2 \left( \vecee \beta^{j,\bl,\varepsilon} - \mathcal{H}(x_2) \vecee M^{j,\bl}\right) \frac{\partial p^\PM }{\partial x_j}\bigg|_\Sigma \notag
    \\
    &
    - \varepsilon^2 \sum_{j=1}^2  \frac{\partial p^\PM }{\partial x_j}(x_1, -H) 
    \vecee q^{j,\bl,\varepsilon} 
    + \varepsilon^2 
    \vecee \zeta^{\bl,\varepsilon} \frac{\partial}{\partial x_1} \frac{\partial v_1^\FF}{\partial x_2}\bigg|_\Sigma 
    + \mathcal{H}(-x_2)\varepsilon^2  \sum_{i,j=1}^2 \vecee \gamma^{j,i,\varepsilon} \frac{\partial^2 p^\PM}{\partial x_i x_j} \notag
    \\
    &+ \varepsilon^2  \sum_{i,j=1}^2   \left( \vecee \gamma^{j,i,\bl,\varepsilon}  -  \mathcal{H}(x_2)  \varepsilon \vecee  C^{j,i,\bl} \right)\frac{\partial^2 p^\PM}{\partial x_i x_j}\bigg|_\Sigma
     +\varepsilon^2 \sum_{j=1}^2 \frac{\partial}{\partial x_1} \frac{\partial p^\PM}{\partial x_j}(x_1,-H) \left( 
     \vecee Z^{j,\bl,\varepsilon} + \varepsilon R^\varepsilon(\vecee e_2) \int_{Z^-} \!q_1^{j,\bl}  \text{d} \vecee y \right)
     \, ,
     \label{eq:approx-6-MMS-v}
    \\
    p^{6,\varepsilon}_{\app} = \, & \mathcal{H}(x_2)p^\FF 
    + \mathcal{H}(-x_2) \bigg( p^\PM - \varepsilon \sum_{j=1}^2 \pi^{j,\varepsilon} \frac{\partial p^\PM}{\partial x_j} \bigg)
    -  \left(s^{\bl,\varepsilon} - \mathcal{H}(x_2) N_s^{\bl} \right) \frac{\partial v_1^\FF}{\partial x_2}\bigg|_\Sigma 
    \notag
    \\
    &
    +\varepsilon \sum_{j=1}^2 \left( \omega^{j,\bl, \varepsilon} - \mathcal{H}(x_2)M_\omega^{j,\bl} \right) \frac{\partial p^\PM }{\partial x_j}\bigg|_\Sigma 
    -\varepsilon \sum_{j=1}^2 \frac{\partial p^\PM }{\partial x_j}(x_1, -H) 
    z^{j,\bl,\varepsilon}
    +\varepsilon^2 \sum_{i,j=1}^2 \left( \pi^{j,i,\bl,\varepsilon} - C_\pi^{j,i,\bl} \right)\frac{\partial^2 p^\PM}{\partial x_i x_j}\bigg|_\Sigma \, .
    \label{eq:approx-6-MMS-p}
\end{align}
\endgroup
Here, we denote the Heaviside step function by $\mathcal{H}$, and we set $\vecee w^{j,\varepsilon}(\vecee x)= \vecee w^j(\frac{\vecee x}{\varepsilon})$,~$\pi^{j,\varepsilon}(\vecee x)= \pi^j(\frac{\vecee x}{\varepsilon})$ for $\vecee x \in \Omega_\PM^\varepsilon$, where $\{\vecee w^j, \pi^j\}$ are the solutions to the cell problems~\eqref{eq:cell-problems} for $j=1,2$.
We define the functions $\vecee t^{\bl,\varepsilon} (\vecee x)= \vecee t^{\bl} (\frac{\vecee x}{\varepsilon})$ and $s^{\bl,\varepsilon} (\vecee x)= s^{\bl} (\frac{\vecee x}{\varepsilon})$ for $\vecee x \in \Omega^\varepsilon$ that are the  solutions to problem~\eqref{eq:BLP-t} extended to the pore-scale flow domain.
The constants $\vecee N^{\bl}$ and~$N_s^{\bl}$ are boundary layer constants defined by~\eqref{eq:BLP-constants-N}.
In an analogous way, $\{\vecee \beta^{j,\bl,\varepsilon}, \omega^{j,\bl,\varepsilon}\}$ is obtained from $\{\vecee \beta^{j,\bl}, \omega^{j,\bl}\}$ which is the solution to~\eqref{eq:BLP-beta}, and the boundary layer constants $\vecee M^{j,\bl}$ and $M_\omega^{j,\bl}$ are introduced in~\eqref{eq:BLP-constants-M} for $j=1,2$.
Moreover, for $\vecee x \in \Omega^\varepsilon$ we use following notation 
\[
\vecee q^{j,\bl,\varepsilon}(\vecee x) = \vecee q^{j,\bl}\left(\frac{x_1}{\varepsilon}, -\frac{x_2+H}{\varepsilon} \right)
\, , \quad
z^{j,\bl,\varepsilon}(\vecee x) = z^{j,\bl}\left(\frac{x_1}{\varepsilon}, -\frac{x_2+H}{\varepsilon} \right) \, , 
\]
where $\{\vecee q^{j,\bl}, z^{j,\bl}\}$ is the solution to~\eqref{eq:BLP-q}.
The boundary layer velocity $\vecee \zeta^{\bl}$ is given by~\eqref{eq:BLP-zeta} and we set $\vecee \zeta^{\bl,\varepsilon}(\vecee x) = \vecee \zeta^{\bl}\left(\frac{\vecee x}{\varepsilon}\right) $ in $\Omega^\varepsilon$.
The auxiliary function $\vecee \gamma^{j,i}$ is the solution to~\eqref{eq:BLP-gamma} and for $\vecee x \in \Omega_\PM^\varepsilon$ we define $\gamma^{j,i,\varepsilon} (\vecee x) = \varepsilon \vecee \gamma^{j,i} \left(\frac{\vecee x}{\varepsilon}\right)$.
The boundary layer correctors $\vecee \gamma^{j,i,\bl,\varepsilon}(\vecee x) = \varepsilon \vecee \gamma^{j,i,\bl} (\frac{\vecee x}{\varepsilon})$ and $\pi^{j,i,\bl,\varepsilon}(\vecee x) = \pi^{j,i,\bl}(\frac{\vecee x}{\varepsilon})$, $\vecee x \in \Omega^\varepsilon$, are defined by the solutions to boundary layer problem~\eqref{eq:BLP-gamma}, and the corresponding boundary layer constants $\vecee C^{j,i,\bl}$ and $C_\pi^{j,i,\bl}$ are given by~\eqref{eq:BLP-constants-gamma}.
Finally, $R^\varepsilon$ denotes the restriction operator defined in, e.g.~\cite[section~4.5]{Carraro_etal_15}, and we introduce the velocity function 
\[
\vecee Z^{j,\bl,\varepsilon}(\vecee x) = \varepsilon \vecee Z^{j,\bl}\left(\frac{x_1}{\varepsilon}, -\frac{x_2+H}{\varepsilon}\right)\, , \quad \text{for } \vecee x \in \Omega^\varepsilon \, ,
\]
where $\vecee Z^{j,\bl}$ is the solution to~\eqref{eq:BLP-Z}.

We introduce the velocity and pressure error functions in a standard way
\begin{align}
    \vecee U^{6,\varepsilon}= \vecee v^\varepsilon - \vecee v^{6,\varepsilon}_{\app} \, , \qquad 
    P^{6,\varepsilon} = p^\varepsilon - p^{6,\varepsilon}_{\app} \, .
\end{align}
%
In the process of deriving estimates for the model approximation error~$\{\vecee U^{6,\varepsilon}, P^{6,\varepsilon}\}$ proposed in~\citep{Eggenweiler_Rybak_MMS20}, we found that the obtained estimates for~$\{\vecee U^{6,\varepsilon}, P^{6,\varepsilon}\}$ do not meet the desired level of accuracy.
%
The reason for that is explained in the following.
If we consider the approximation~\eqref{eq:approx-6-MMS-v} of pore-scale velocity up to order $\varepsilon^2$  and the approximation~\eqref{eq:approx-6-MMS-p} of pore-scale pressure up to order $\varepsilon^1$, for the corresponding error functions $\{\vecee U^{6,\varepsilon}, P^{6,\varepsilon} \}$ it should hold
\begin{equation*}
\begin{aligned}
    \| \vecee U^{6,\varepsilon} \|_{L^2(\Omeps)^2} 
    \leq C\varepsilon^i \, , \quad i>2 \,  ,
    \qquad
    \| P^{6,\varepsilon} \|_{L^2(\Omeps)} 
    &\leq C\varepsilon^j \, , \quad j>1 \, .
\end{aligned}
\end{equation*}
However, we found that such estimates are only possible if in~\cite[Corollary 3.4]{Eggenweiler_Rybak_MMS20} the factor $\varepsilon^{3/2}$ is replaced by $\varepsilon^{5/2}$, i.e., if
for all  $\vecee \varphi \in \Vper$ the following inequality holds true
\begin{align}\label{eq:cor:4}
    \bigg| \int_{\Omega^\varepsilon} \nabla \vecee U^{6,\varepsilon} & \colon \nabla  \vecee  \varphi - \int_{\Omega^\varepsilon} P^{6,\varepsilon} \div \vecee \varphi 
    \bigg| 
    \leq C\varepsilon^{5/2} \norm{\nabla \vecee \varphi}_{L^2(\Omega^\varepsilon)^{{2\times2}}}
    \, . 
\end{align}
However, this inequality is not true for the approximations~\eqref{eq:approx-6-MMS-v},~\eqref{eq:approx-6-MMS-p}. Thus, we need to extend the pore-scale approximations~\eqref{eq:approx-6-MMS-v},~\eqref{eq:approx-6-MMS-p} by additional higher-order terms w.r.t. $\varepsilon$ in such a way that for the new error functions, inequality~\eqref{eq:cor:4} is fulfilled.

In order to identify the terms of low order, i.e., terms on the right-hand side of~\eqref{eq:cor:4} that cannot be estimated at least by $C \varepsilon^{5/2}$, we write the weak formulation w.r.t.~$\{\vecee U^{6,\varepsilon}, P^{6,\varepsilon} \}$ as follows
\begingroup
\allowdisplaybreaks
\begin{align}
    \int_{\Omega^\varepsilon}&   \nabla \vecee U^{6,\varepsilon} \colon \nabla  \vecee  \varphi
    - \int_{\Omega^\varepsilon} P^{6,\varepsilon} \div \vecee \varphi
    - \int_{\Sigma} \underbrace{p^\PM}_{=\colon I^1}    \varphi_2 
    - \int_{\Sigma} \underbrace{\left(
    \frac{\partial v_2^\FF}{\partial x_2} 
     - p^\FF  \right)}_{=\colon I^2} \varphi_2 
    + \int_{\Sigma} \underbrace{ N_s^{\bl}  \frac{\partial v_1^\FF}{\partial x_2} \bigg|_\Sigma }_{=\colon -I^3} \varphi_2 
    - \int_\Sigma \underbrace{\varepsilon\sum_{j=1}^2 M_\omega^{j,\bl} \dpxj }_{=\colon I^4}  \varphi_2
    \notag 
    \\
     &
    -\int_{\Sigma} \underbrace{ \big\llbracket \varepsilon^2 \nabla \vecee \zeta^{\bl,\varepsilon} \vecee e_2 \rrbracket_\Sigma \vdot \vecee e_2 \dvdto }_{=\colon I^5} \varphi_2 
    + \int_{\Omega_\PM^\varepsilon}   \underbrace{ \sum_{j=1}^2 \bigg(  2\varepsilon^2 \nabla \vecee{w}^{j, \varepsilon}   - \varepsilon \pi^{j,\varepsilon}\ten I \bigg)\nabla \frac{\partial p^\PM}{\partial x_j}}_{=\colon \vecee C_\varepsilon}  \vdot \vecee  \varphi 
    \notag 
    \\
    &+ \int_{ \Omeps}  \underbrace{
    \varepsilon \left( \left(\vecee t^{\bl,\varepsilon} - \mathcal{H}(x_2) \vecee N^{\bl} \right) \frac{\partial^2 }{\partial x_1^2}\frac{\partial v_1^\FF}{\partial x_2}\bigg|_\Sigma
    \right)}_{=\colon \vecee D_\varepsilon} \vdot \vecee \varphi  
    - \int_{\Omeps}  \underbrace{ \left(-  2 \varepsilon   \frac{\partial}{\partial x_1} \vecee t^{\bl,\varepsilon}
    +  s^{\bl,\varepsilon}\vecee e_1 - \mathcal{H}(x_2) N_s^{\bl} \vecee e_1 \right) \frac{\partial}{\partial x_1} \frac{\partial v_1^\FF}{\partial x_2}\bigg|_\Sigma }_{=\colon \vecee E_\varepsilon} \vdot \vecee \varphi 
    \notag
    \\
    & - \int_{ \Omepspm}  \underbrace{\varepsilon^2 \sum_{i,j=1}^2  \Delta \vecee \gamma^{j,i,\varepsilon} \dpxij}_{=\colon \vecee F_\varepsilon} \vdot \vecee \varphi 
    - \int_{\Omeps}   \underbrace{
    \varepsilon^2 \Delta \vecee \zeta^{\bl,\varepsilon}
    \dvdto }_{=\colon \vecee G_\varepsilon} \vdot \vecee \varphi 
    \notag 
    \\ 
    =& \int_{\Sigma}  \varepsilon^2 \sum_{j=1}^2 \left( \left(  \vecee{w}^{j, \varepsilon} \! \otimes \nabla \frac{\partial p^\PM }{\partial x_j} \right) \vecee e_2 \right) \vdot \vecee  \varphi
    -  \int\limits_{\{x_2 = -H\}}  \varepsilon^2 \sum_{j=1}^2  \left( \left(\vecee{w}^{j, \varepsilon} \! \otimes \nabla \frac{\partial p^\PM }{\partial x_j} \right) \vecee e_2 \right) \vdot \vecee  \varphi
    - \int_{\Omega_\PM^\varepsilon}   \sum_{j=1}^2  \varepsilon^2\vecee{w}^{j, \varepsilon} \Delta \frac{\partial p^\PM }{\partial x_j}  \vdot \vecee  \varphi
    \notag
    \\
    & - \intdel{\varepsilon^2\sum_{j=1}^2}{\beta^{j,\bl,\varepsilon} - \mathcal{H} (x_2)\vecee M^{j,\bl} }{\dpxj} 
    -\intotim{\varepsilon^2\sum_{j=1}^2}{\beta^{j,\bl,\varepsilon} - \mathcal{H} (x_2)\vecee M^{j,\bl}}{\dpxj} \notag
    \\
    & 
    - \intpeps{\varepsilon\sum_{j=1}^2}{\omega^{j,\bl,\varepsilon} - \mathcal{H}(x_2)M_\omega^{j,\bl} }{\dpxj}
    +\int_{\Omepspm} \underbrace{ \varepsilon^2 \sum_{j=1}^2 \left( \vecee q^{j,\bl,\varepsilon} \Delta \dpxjH
    \right)}_{=\colon \vecee M_\varepsilon} \vdot \vecee \varphi 
     \notag
    \\
    & 
    +\int_{\Omepspm} \underbrace{ \varepsilon \sum_{j=1}^2 \left( \nabla  \dpxjH z^{j,\bl,\varepsilon}
    \right)}_{=\colon \vecee N_\varepsilon} \vdot \vecee \varphi
    + 2 \int_{\Omepspm} \underbrace{\varepsilon^2 \sum_{j=1}^2 \left( \vecee q^{j,\bl,\varepsilon} \otimes \nabla \dpxjH
    \right) }_{=\colon \ten L_\varepsilon } \colon \nabla \vecee \varphi
    \notag
    \\
    &
    +\int_{\Gamma_H}  \varepsilon^2 \sum_{i,j=1}^2  \pd{}{x_2} \gamma_2^{j,i,\varepsilon} \dpxij    \varphi_2 \notag 
    -\int_{ \Omepspm}  \varepsilon^2 \sum_{i,j=1}^2 
    \vecee \gamma^{j,i,\varepsilon} \Delta  \dpxij  
    \vdot \vecee \varphi 
    - 2 \int_{\Omepspm} \varepsilon^2 \sum_{i,j=1}^2\left( \vecee \gamma^{j,i,\varepsilon} \otimes \nabla \dpxij \right) \colon \nabla \vecee \varphi
    \notag 
    \\
    & -\intdel{\varepsilon^2\sum_{i,j=1}^2}{\vecee \gamma^{j,i,\bl,\varepsilon} - \varepsilon\mathcal{H} (x_2)\vecee C^{j,i,\bl} }{\dpxij}  
    -\intotim{\varepsilon^2\sum_{i,j=1}^2}{\vecee \gamma^{j,i,\bl,\varepsilon} - \varepsilon \mathcal{H} (x_2)\vecee C^{j,i,\bl}}{\dpxij} \notag
    \\
    & 
    - \intpeps{\varepsilon^2\sum_{i,j=1}^2}{\pi^{j,i,\bl,\varepsilon} -  C_\pi^{j,i,\bl}}{\dpxij}
    - \int_{\Omeps}  \varepsilon^2 \vecee \zeta^{\bl,\varepsilon} \Delta \dvdto  \vdot \vecee \varphi 
    - 2 \int_{\Omeps}   \varepsilon^2  \left( \vecee \zeta^{\bl,\varepsilon} \otimes \nabla \dvdto  \right) \colon \nabla \vecee \varphi \notag 
    \\
    & - \int_{\Omega^\varepsilon}   \varepsilon^2  \sum_{j=1}^2 \nabla \left( 
    \frac{\partial}{\partial x_1} \frac{\partial p^\PM}{\partial x_j}(x_1,-H) \left( \vecee Z^{j,\bl,\varepsilon} + \varepsilon R^\varepsilon(\vecee e_2) \int_{Z^-} q_1^{j,\bl} \ \text{d} \vecee y \right) \right) \colon \nabla  \vecee  \varphi
    \notag 
    \\
    &+ \text{exponentially small terms} \, .
 \label{eq:weak-form-approx-6}
\end{align}
\endgroup
Note that equation~\eqref{eq:weak-form-approx-6} is already obtained in~\cite[eq. (3.53)]{Eggenweiler_Rybak_MMS20} with some additional terms on the right-hand side due to the corrector~$\{\vecee v^\mathrm{cf}, p^\mathrm{cf}\}$.
In this work, we put the integral terms including $I^4$, $I^5$, $\vecee C_\varepsilon$, $\vecee D_\varepsilon$,  $\vecee E_\varepsilon$,  $\vecee F_\varepsilon$ and  $\vecee G_\varepsilon$,  originally appearing on the right-hand side of the equation, to the left-hand side of~\eqref{eq:weak-form-approx-6}.
The terms $\int_\Sigma I^i \varphi_2$, $i=1,\ldots,5$, on the left-hand side of~\eqref{eq:weak-form-approx-6} will later vanish due to the new interface condition~\eqref{eq:IC-HO-p} for the pressure (see also equation~\eqref{eq:set-pressure-IC}). Exponentially small terms appearing on the right-hand side of~\eqref{eq:weak-form-approx-6} include integrals of boundary layer velocities and their gradients over the lower boundary $\Gamma_H$ as in~\cite[section 3.2.2]{Eggenweiler_Rybak_MMS20}. This notation will be used for such integrals throughout the manuscript.
The integral terms including $\vecee C_\varepsilon$, $\vecee D_\varepsilon$, $\vecee E_\varepsilon$, $\vecee F_\varepsilon$ and $\vecee G_\varepsilon$ are terms of low order w.r.t. $\varepsilon$ since it holds
\begin{align}
    \bigg| \int_{\Omepspm} \vecee C_\varepsilon \vdot \vecee \varphi \bigg|
    &
    \leq C \varepsilon^2 \normgradphipm \, ,
    \label{eq:low-order-estimate-1}
    \qquad 
    \bigg| \int_{ \Omeps} \vecee D_\varepsilon\vdot \vecee \varphi   \bigg|
    \leq C \varepsilon^{3/2} \normgradphi \, ,
    \\
    \bigg|   \int_{\Omeps}  \vecee E_\varepsilon \vdot \vecee \varphi  \bigg|
    & \leq C \varepsilon^{3/2} \normgradphi \, ,
    \qquad 
    \bigg| \int_{ \Omepspm}  \vecee F_\varepsilon \vdot \vecee \varphi  \bigg|
     \leq C \varepsilon^2 \normgradphipm \, ,
    \\
    \bigg|\int_{\Omeps} \vecee G_\varepsilon \vdot \vecee \varphi \bigg|
    &\leq C \varepsilon^{1/2} \normgradphi \, .
    \label{eq:low-order-estimate-5}
\end{align}
In order to obtain estimates~\eqref{eq:low-order-estimate-1}--\eqref{eq:low-order-estimate-5}, we used the assumptions given in Remark~\ref{rem},  estimates~\eqref{eq:Poincare-estimate},~\eqref{eq:estimates-cell-problems},~\eqref{eq:estimates-gamma-cell},~\eqref{eq:estimates-t} and~\eqref{eq:estimates-zeta}.

In addition, we derive the following sharp estimates for the terms appearing on the right-hand side of equation~\eqref{eq:weak-form-approx-6}:
\begin{align}
     \bigg| \int_{\Omepspm}  \vecee M_\varepsilon\vdot \vecee \varphi \bigg|
     \leq C\varepsilon^{5/2} \normphipm &\leq C \varepsilon^{7/2} \normgradphipm \, ,
     \label{eq:estiamtes-q-1}
     \\
    \bigg| \int_{\Omepspm}  \vecee N_\varepsilon\vdot \vecee \varphi \bigg|
    \leq C\varepsilon^{3/2} \normphipm
    &\leq C\varepsilon^{5/2} \normgradphipm \, ,
    \\
    \bigg|  \int_{\Omepspm}  \vecee L_\varepsilon  \colon \nabla \vecee \varphi \bigg|
    &\leq C\varepsilon^{5/2} \normgradphipm \, .
    \label{eq:estiamtes-q-3}
\end{align}
Estimates~\eqref{eq:estiamtes-q-1}--\eqref{eq:estiamtes-q-3} are obtained by applying~\eqref{eq:Poincare-estimate} and~\eqref{eq: estimates-q-z} provided in the appendix.
All remaining  terms on the right-hand side of equation~\eqref{eq:weak-form-approx-6} are estimated at least by $C \varepsilon^{5/2} \normgradphi$ that is proven in~\cite[section 3.2]{Eggenweiler_Rybak_MMS20}.
Thus, in total, we obtain the following result
\begin{align}
    \bigg| \int_{\Omega^\varepsilon}&   \nabla \vecee U^{6,\varepsilon} \colon \nabla  \vecee  \varphi
    - \int_{\Omega^\varepsilon} P^{6,\varepsilon} \div \vecee \varphi
    - \int_{\Sigma} \sum_{i=1}^5 I^i   \varphi_2  
    + \int_{\Omega_\PM^\varepsilon} \vecee C_\varepsilon  \vdot \vecee  \varphi 
    + \int_{ \Omeps}  \vecee D_\varepsilon \vdot \vecee \varphi  
    - \int_{\Omeps}   \vecee E_\varepsilon \vdot \vecee \varphi 
    - \int_{ \Omepspm} \vecee F_\varepsilon \vdot \vecee \varphi 
    - \int_{\Omeps}  \vecee G_\varepsilon \vdot \vecee \varphi  \bigg|
    \notag 
    \\[1ex]
    &\leq C \varepsilon^{5/2} \normgradphi \, ,\qquad  \forall \vecee \varphi \in \Vper \, .
 \label{eq:weak-form-approx-6-estimate}
\end{align}
%
Now, our task is to eliminate the five integral terms of low order on the left-hand side of inequality~\eqref{eq:weak-form-approx-6-estimate}.
This is done in the next section.

\subsection{Elimination of low-order terms w.r.t. $\varepsilon$}
\label{sec:additional-BLP}
In this section, we present the cell and boundary layer problems that are needed to eliminate the five low-order terms w.r.t. $\varepsilon$ (that cannot be estimated at least by $C \varepsilon^{5/2}$) appearing on the left-hand side of~\eqref{eq:weak-form-approx-6-estimate}. 
We get rid of the low-order integral terms by adding corrector functions, obtained from the cell and boundary layer problems, to the error functions $\vecee U^{6,\varepsilon}$ and $P^{6,\varepsilon}$.
Each term that needs to be eliminated requires its own correctors, thus, we treat each correction step separately.

\subsubsection{Elimination of integral term including $\vecee C_\varepsilon$}
\label{sec:problem-AUX-cell}
In order to eliminate the term in~\eqref{eq:weak-form-approx-6-estimate} containing $\vecee C_\varepsilon$, we introduce the following auxiliary cell problems 
\begin{equation}
    \begin{aligned}
        - \Delta_{\vecee y} \vecee w^{j,i} + \nabla_{\vecee y} \pi^{j,i} &= \left( -2\nabla_{\vecee y} \vecee w^{j} +\pi^j \ten I \right) \vecee e_i \quad &&\text{ in } Y_\text{f} \, ,
        \\
        \divy \vecee w^{j,i} &= 0 \quad &&\text{ in } Y_\text{f} \, ,
        \\
        \vecee w^{j,i} &= \vecee 0 \quad &&\text{ on } \partial Y_\text{s} \, , 
        \\
        \{ \vecee w^{j,i} , \pi^{j,i} \} \text{ are $\vecee y$-periodic} \, , & \int_{Y_\text{f}} \pi^{j,i} d \vecee y = 0 \, .
    \end{aligned}
    \label{eq:cell-w}
\end{equation}
We set $\vecee w^{j,i,\varepsilon} (\vecee x)= \vecee w^{j,i} \left(\frac{\vecee x}{\varepsilon} \right)$, $\pi^{j,i,\varepsilon} (\vecee x) = \pi^{j,i} \left(\frac{\vecee x}{\varepsilon} \right)$ for $\vecee x \in \Omepspm$ and $\vecee w^{j,i,\varepsilon} = \vecee 0$ in $\Omega_\PM \setminus \Omepspm$.

We introduce the boundary layer corrector 
\begin{align}\label{eq:corrector-w}
    \left\{ \varepsilon^3 \vecee w^{j,i,\varepsilon}{\dpxijdom}\, , \ {\varepsilon^2}\pi^{j,i,\varepsilon}{\dpxijdom} \right\} \, 
\end{align}
that becomes a part of the new error functions $\vecee U^{7,\varepsilon}$ and $P^{7,\varepsilon}$ defined in~\eqref{eq:error-7-U} and~\eqref{eq:error-7-P}.
Corrector~\eqref{eq:corrector-w} leads to the following terms in the weak formulation for the errors
\begingroup
\allowdisplaybreaks
\begin{align}
    -&\intpmvel{\varepsilon^3}{\sum_{i,j=1}^2 \vecee w^{j,i,\varepsilon}}{\dpxijdom}
    + \intpmp{\varepsilon^2}{\sum_{i,j=1}^2 \pi^{j,i,\varepsilon}}{\dpxijdom}
     -\int_{\Omepspm} \varepsilon \sum_{i,j=1}^2 \underbrace{\left( \Delta_{\vecee y} \vecee w^{j,i} - \nabla_{\vecee y} \pi^{j,i} \right)}_{=- \left( -2 \nabla_{\vecee y} \vecee w^{j} +\pi^j \ten I \right) \vecee e_i} \dpxijdom
    \vdot \vecee \varphi \notag
    \\
    =&
    -\int_{\Sigma} \varepsilon^3 \sum_{i,j=1}^2  \left( \nabla \vecee w^{j,i,\varepsilon} \dpxijdom + \vecee w^{j,i,\varepsilon} \otimes \nabla \dpxijdom  \right)  \vecee e_2 \vdot \vecee \varphi 
    +\int_{\Gamma_H} \varepsilon^3 \sum_{i,j=1}^2 \left( \nabla \vecee w^{j,i,\varepsilon} \dpxijdom + \vecee w^{j,i,\varepsilon} \otimes \nabla \dpxijdom  \right) \vecee e_2 \vdot \vecee \varphi 
    \notag
    \\
    & +\int_{\Omepspm} \varepsilon^3 \sum_{i,j=1}^2 \left(2 \nabla \vecee w^{j,i,\varepsilon} \nabla \dpxijdom  + \vecee w^{j,i,\varepsilon} \Delta \dpxijdom \right) \vdot \vecee \varphi 
    +  \int_{\Sigma} \varepsilon^2 \sum_{i,j=1}^2 \pi^{j,i,\varepsilon} \dpxijdom  \vecee e_2 \vdot \vecee \varphi
    - \int_{\Omepspm} \varepsilon^2 \sum_{i,j=1}^2 \pi^{j,i,\varepsilon} \nabla  \dpxijdom  \vdot \vecee \varphi 
    \, .
    \label{eq:rhs-w-kappa}
\end{align}
\endgroup
We note that for the third term on the left-hand side of~\eqref{eq:rhs-w-kappa} we have
\begin{align}
    \int_{\Omepspm} \varepsilon \sum_{i,j=1}^2 
     \left( -2\nabla_{\vecee y} \vecee w^{j} +\pi^j \ten I \right) \vecee e_i \dpxijdom
    \vdot \vecee \varphi 
    =
    \int_{\Omepspm} \varepsilon \sum_{j=1}^2 \left( -2\nabla_{\vecee y} \vecee w^{j} +\pi^j \ten I \right) \nabla \pd{p^\PM}{x_j}
    \vdot \vecee \varphi = 
    -  \int_{\Omepspm} \vecee C_\varepsilon \vdot \vecee \varphi \, .
\end{align}
That is exactly the term that appeared on the left-hand side of~\eqref{eq:weak-form-approx-6} but with the opposite sign. Thus, when adding the corrector~\eqref{eq:corrector-w} to the error $\{\vecee U^{6,\varepsilon}, P^{6,\varepsilon} \}$, the integral term including $\vecee C_\varepsilon$ will cancel when considering the weak formulation of the new error functions.
Note that the cell problem solution $\{ \vecee w^{j,i} ,\pi^{j,i}\}$ is independent of $\varepsilon$ and it holds
\begin{align}
\label{eq:estimates-w}
    \| \vecee w^{j,i,\varepsilon} \|_{L^2(\Omepspm)^2}  &\leq C  \, ,
    \qquad 
    \| \nabla  \vecee w^{j,i,\varepsilon} \|_{L^2(\Omepspm)^{2\times 2}}  \leq C \varepsilon^{-1} \, ,
    \qquad 
    \| \pi^{j,i,\varepsilon} \|_{L^2(\Omepspm)}  \leq C \, .
\end{align}
Using estimates~\eqref{eq:estimates-w},~\eqref{eq:Poincare-estimate} and the assumptions from Remark~\ref{rem}, all  terms on the right-hand side of~\eqref{eq:rhs-w-kappa} can be estimated at least by $C \varepsilon^{5/2} \normgradphipm$.


\subsubsection{Elimination of integral term including $\vecee D_\varepsilon$}
\label{sec:elimination-D}
For the elimination of $\int_{\Omega^\varepsilon} \vecee D_\varepsilon \vdot \vecee \varphi$ in equation~\eqref{eq:weak-form-approx-6} we construct a problem defined in the boundary layer stripe $Z^\bl = Z^+ \cup S \cup Z^-$, where $Z^+ = (0,1) \times (0, \infty)$ is the free-flow part, $S=(0,1) \times \{0\}$ the interface, and $Z^-= \cup_{k=1}^\infty \left( Y_\text{f} - (0,k)\right) $ the porous part of the stripe (figure~\ref{fig:setting}, right).
The boundary layer problem reads
\begin{equation}\label{eq:BLP-nu}
\begin{aligned}
    \Delta_{\vecee y} \vecee \nu^{\bl} - \nabla_{\vecee y} \sigma^{\bl} &= - (\vecee t^{\bl} -  \mathcal{H}(y_2) \vecee N^{\bl}) \quad \text{ in } Z^+ \cup Z^- \, , 
    \\
    \divy \vecee \nu^{\bl} &=  0 \quad \text{ in } Z^+ \cup Z^- \, , 
    \\
    \llbracket   \vecee \nu^{\bl} \rrbracket_S  &= \vecee 0  \quad \text{ on } S \, , 
    \\
    \llbracket  \nabla_{\vecee y} \vecee \nu^{\bl} -  \sigma^{\bl} \ten I\rrbracket_S \vecee e_2 &= \vecee 0  \quad \text{ on } S\, , 
    \\
    \vecee \nu^{\bl} = \vecee 0 \quad \text{ on } \cup_{k=1}^{\infty}(\partial Y_\text{s} - (0,k)), & \quad \{\vecee \nu^{\bl}, \sigma^{\bl} \} \text{ is 1-periodic in $y_1$}\, .
\end{aligned}
\end{equation}
Problem~\eqref{eq:BLP-nu} is a boundary layer problem in the sense of problem (AUX) introduced by~\cite{Jaeger_Mikelic_96} since for $\gamma>0$ we have $e^{\gamma |y_2|}(\vecee t^{\bl} -  \mathcal{H}(y_2) \vecee N^{\bl}) \in L^2(Z^\bl)^2$ which is proven in~\cite[Corollary 3.4]{Jaeger_etal_01}.
Thus, we know that the solution $\{\vecee \nu^{\bl}, \sigma^{\bl}\}$ to~\eqref{eq:BLP-nu} decays exponentially fast to zero for $y_2 \to - \infty$ and stabilizes to the following boundary layer constants for $y_2 \to  \infty$:
\begin{align*}
    \vecee R^{\bl} = \left( \int_S \nu_1^{\bl} d y_1, 0  \right)^\top \, ,\qquad 
    R_\sigma^{\bl} = \int_S \sigma^{\bl} d y_1 \,. 
\end{align*}

As usual, we set $\vecee \nu^{\bl,\varepsilon} (\vecee x)= \vecee \nu^{\bl} \left(\frac{\vecee x}{\varepsilon} \right)$, $\sigma^{\bl,\varepsilon} (\vecee x) = \sigma^{\bl} \left(\frac{\vecee x}{\varepsilon} \right)$ for $\vecee x \in \Omeps$ and extend the boundary layer velocity $\vecee \nu^{\bl,\varepsilon}$ to zero in $\Omega \setminus \Omeps$.
After~\citep[Corollary 3.11 and Proposition 3.12]{Jaeger_Mikelic_96} we have the following estimates for the boundary layer functions
\begin{equation}\label{eq:estimates-nu}
\begin{aligned}
     \| \vecee \nu^{\bl, \varepsilon} - \mathcal{H}(x_2)\vecee R^\bl \|_{L^2(\Omega)^2} &\leq C \varepsilon^{1/2}  \, ,
     \qquad 
     \| \nabla \vecee \nu^{\bl, \varepsilon} \|_{L^2(\Omega)^{2\times 2}} \leq C \varepsilon^{-1/2} \, ,
     \\
     \| \sigma^{\bl, \varepsilon} - \mathcal{H}(x_2)R_\sigma^\bl \|_{L^2(\Omega^\varepsilon)} &\leq C \varepsilon^{1/2}  \, .
\end{aligned}
\end{equation}

The part of the weak form corresponding to the boundary layer corrector 
\begin{align}\label{eq:corrector-nu}
\left\{ {\varepsilon^3 }{\left( \vecee \nu^{\bl,\varepsilon} - \mathcal{H} (x_2)\vecee R^{\bl} \right)}{\dvdtoo} , \ {\varepsilon^2}{\left( \sigma^{\bl,\varepsilon} - \mathcal{H}(x_2) R_\sigma^{\bl} \right)}{\dvdtoo}  \right\} \, ,
\end{align}
which is later added to the velocity and pressure error, reads
\begingroup
\begin{align}
    &\intvel{\varepsilon^3 }{\left( \vecee \nu^{\bl,\varepsilon} - \mathcal{H} (x_2)\vecee R^{\bl} \right)}{\dvdtoo} 
    - \intp{\varepsilon^2}{\left( \sigma^{\bl,\varepsilon} - \mathcal{H}(x_2) R_\sigma^{\bl} \right)}{\dvdtoo} 
    \notag 
    \\ 
    & \qquad 
     -
    \int_{\Omeps} \underbrace{\varepsilon \left(\vecee t^{\bl,\varepsilon} - \mathcal{H}(x_2) \vecee N^{\bl}\right)\dvdtoo}_{= \vecee D_\varepsilon} \vdot \vecee \varphi
    \notag
    \\
    &= \quad  
    -\int_\Sigma {\varepsilon^2}{R_\sigma^{\bl} }{ \dvdtoo} \varphi_2
    +\intdel{\varepsilon^2}{\vecee \nu^{\bl,\varepsilon} - \mathcal{H} (x_2)\vecee R^{\bl} }{\dvdtoo} \notag
    \\
    & \qquad 
    +\intotim{\varepsilon^3}{\vecee \nu^{\bl,\varepsilon} - \mathcal{H} (x_2)\vecee R^{\bl}}{\dvdtoo}  + \intpff{\varepsilon^2}{\sigma^{\bl,\varepsilon} -  R_\sigma^{\bl}}{\dvdtoo}
    \notag 
    \\
    & \qquad + \intppm{\varepsilon^2}{\sigma^{\bl,\varepsilon}}{\dvdtoo}  
     + \text{exponentially small terms} \, .
    \label{eq:rhs-nu}
\end{align}
\endgroup
Note that the term including $\vecee D_\varepsilon$ we wanted to eliminate in~\eqref{eq:weak-form-approx-6} appears now on the left-hand side of~\eqref{eq:rhs-nu} with the opposite sign, as desired.
Using the assumptions in Remark~\ref{rem} and estimates~\eqref{eq:estimates-nu}, all terms staying on the right-hand side (RHS) of~\eqref{eq:rhs-nu} are estimated by
\begin{align}
    | RHS\eqref{eq:rhs-nu} | \leq C \varepsilon^{5/2} \normgradphi \, .
\end{align}


\subsubsection{Elimination of integral term including $\vecee E_\varepsilon$}
To get rid of the low-order term $\int_{\Omega^\varepsilon} \vecee E_\varepsilon \vdot \vecee \varphi$  in~\eqref{eq:weak-form-approx-6}, we define the following  boundary layer problem
\begin{equation}\label{eq:BLP-xi}
\begin{aligned}
    \Delta_{\vecee y} \vecee \xi^{\bl} - \nabla_{\vecee y} \eta^{\bl} &= - \lrp{ 2\frac{\partial \vecee t^{\bl} }{\partial y_1} - s^{\bl} \vecee e_1 { \color{black}+ \mathcal{H}(y_2) N_s^{\bl} \vecee e_1}} \quad \text{ in } Z^+ \cup Z^- \, ,
    \\
    \divy \vecee \xi^{\bl} &=  0
    \quad \text{ in } Z^+ \cup Z^- \, ,
    \\
    \llbracket   \vecee \xi^{\bl} \rrbracket_S  &= \vecee 0  \quad \text{ on } S \, , 
    \\
    \llbracket  \nabla_{\vecee y} \vecee \xi^{\bl} -  \eta^{\bl} \ten I\rrbracket_S \vecee e_2 &= \vecee 0  \quad \text{ on } S \, , 
    \\
    \vecee \xi^{\bl} = \vecee 0 \quad \text{ on }  \cup_{k=1}^{\infty}&(\partial Y_\text{s} - (0,k)),  \quad \{\vecee \xi^{\bl}, \eta^{\bl} \} \text{ is 1-periodic in $y_1$} \, . 
\end{aligned}
\end{equation}
Since for $\gamma>0$ it is $e^{\gamma|y_2|}\nabla_{\vecee y} \vecee t^{\bl} \in L^2(Z^{\bl})^2$ and  $e^{\gamma|y_2|}\left( s^{\bl} - \mathcal{H}(y_2)N_s^{\bl} \right) \in L^2(Z^{\bl})$, we know that problem~\eqref{eq:BLP-xi} fits in the form of the (AUX) problem from~\citep{Jaeger_Mikelic_96}. Thus, it is a boundary layer problem in the classical sense and there exists a solution $\{\vecee \xi^\bl, \eta^\bl\}$ that is unique (the pressure up to a constant).
We extend the boundary layer velocity by zero in $\Omega \setminus \Omega^\varepsilon$ and set  $ \vecee \xi^{\bl,\varepsilon} (\vecee x) = \vecee \xi^{\bl} \lrp{\frac{\vecee x}{\varepsilon}} $ and $\eta^{\bl,\varepsilon} (\vecee x) = \eta^{\bl} \lrp{\frac{\vecee x}{\varepsilon}} $ for $\vecee x \in \Omeps$.
After~\citep{Jaeger_Mikelic_96}  the boundary layer velocity and pressure stabilize to zero in the porous medium and to the following constants in the free-flow region 
\begin{align}\label{eq:BLP-constants-L}
    \vecee L^{\bl} = \left( L_1^\bl, 0\right)^\top= \left( \int_S \xi_1^{\bl}  d y_1, 0 \right)^\top \, ,
    \qquad 
    L_\eta^{\bl} =  \int_S \eta^{\bl}  d y_1\, .
\end{align}
Moreover, the following estimates for boundary layer velocity and pressure hold
\begin{equation}\label{eq:estimates-xi}
\begin{aligned}
     \| \vecee \xi^{\bl, \varepsilon} - \mathcal{H}(x_2)\vecee L^\bl \|_{L^2(\Omega)^2} &\leq C \varepsilon^{1/2}  \, ,
     \qquad 
     \| \nabla \vecee \xi^{\bl, \varepsilon} \|_{L^2(\Omega)^{2\times 2}} \leq C \varepsilon^{-1/2} \, ,
     \\
     \| \eta^{\bl, \varepsilon} - \mathcal{H}(x_2)L_\eta^\bl \|_{L^2(\Omega^\varepsilon)} &\leq C \varepsilon^{1/2}  \, .
\end{aligned}
\end{equation}

We introduce the following corrector that will appear in the new error functions
\begin{align}\label{eq:corrector-xi}
 \left\{ {\varepsilon^2 }{\left( \vecee \xi^{\bl,\varepsilon} - \mathcal{H} (x_2)\vecee L^{\bl} \right)}{\dvdto} , \
 {\varepsilon}{\left( \eta^{\bl,\varepsilon} - \mathcal{H}(x_2) L_\eta^{\bl} \right)}{\dvdto}  \right\} \, .
\end{align}
The corrector generates the subsequent terms when considering the weak formulation for the velocity error~$\vecee U^{7,\varepsilon}$ and pressure error~$P^{7,\varepsilon}$:
\begingroup
\allowdisplaybreaks
\begin{align}
    & \intvel{\varepsilon^2 }{\left( \vecee \xi^{\bl,\varepsilon} - \mathcal{H} (x_2)\vecee L^{\bl} \right)}{\dvdto} 
    - \intp{\varepsilon}{\left( \eta^{\bl,\varepsilon} - \mathcal{H}(x_2) L_\eta^{\bl} \right)}{\dvdto} 
     \notag
    \\
    & \quad
    + \int_\Sigma \underbrace{\varepsilon L_\eta^{\bl}  \dvdto}_{=\colon I^6} \varphi_2
    +\int_{\Omeps}  \underbrace{ \left(-  2 \varepsilon   \frac{\partial}{\partial x_1} \vecee t^{\bl,\varepsilon}
    +  s^{\bl,\varepsilon}\vecee e_1 - \mathcal{H}(x_2) N_s^{\bl} \vecee e_1 \right) \frac{\partial}{\partial x_1} \frac{\partial v_1^\FF}{\partial x_2}\bigg|_\Sigma }_{= \vecee E_\varepsilon} \vdot \vecee \varphi 
    \notag
    \\
    =&\quad 
    \intdel{\varepsilon^2}{\vecee \xi^{\bl,\varepsilon} - \mathcal{H} (x_2)\vecee L^{\bl} }{\dvdto}
    + \intppm{\varepsilon}{\eta^{\bl,\varepsilon}}{\dvdto}
    \notag
    \\
    & \quad 
    +\intotim{\varepsilon^2}{\vecee \xi^{\bl,\varepsilon} - \mathcal{H} (x_2)\vecee L^{\bl}}{\dvdto} \notag
    \\
    & \quad
    + \intpff{\varepsilon}{\eta^{\bl,\varepsilon} -  L_\eta^{\bl}}{\dvdto}  
    + \text{exponentially small terms} \, .
    \label{eq:rhs-xi}
\end{align}
\endgroup
The integral including $I^6$ in the second line will later vanish due to the interface condition for the pressure.
Estimation of the last integral term over $\Omega_\FF$ on the right-hand side of~\eqref{eq:rhs-xi} is done in the same way as in~\citep[eqs. (3.22)--(3.24)]{Eggenweiler_Rybak_MMS20}.
Thus, under the assumptions given in Remark~\ref{rem} and using~\eqref{eq:Poincare-estimate} as well as \eqref{eq:estimates-xi}, we obtain the following estimate for all integral terms on the right-hand side of~\eqref{eq:rhs-xi}:
\begin{align}
    | RHS\eqref{eq:rhs-xi} | \leq C \varepsilon^{5/2} \normgradphi 
    \, .
\end{align}


\subsubsection{Elimination of integral term including $\vecee F_\varepsilon$}
In the same way as in section~\ref{sec:problem-AUX-cell}, we introduce other auxiliary cell problems in order to eliminate the integral in~\eqref{eq:weak-form-approx-6} that contains $\vecee F_\varepsilon$. For $i,j=1,2$ these problems read
\begin{equation}\label{eq:cell-gamma}
\begin{aligned}
    - \Delta_{\vecee y} \vecee W^{j,i} + \nabla_{\vecee y} \Lambda^{j,i} &= \Delta_{\vecee y } \vecee \gamma^{j,i} \quad &&\text{ in } Y_\text{f} \, ,
    \\
    \divy \vecee W^{j,i} &= 0 \quad &&\text{ in } Y_\text{f} \, ,
    \\
    \vecee W^{j,i} &= \vecee 0 \quad &&\text{ on } \partial Y_\text{s} \, , 
    \\
    \{ \vecee W^{j,i} , \Lambda^{j,i} \} \text{ are $\vecee y$-periodic} \, , & \int_{Y_\text{f}} \Lambda^{j,i} d \vecee y = 0 \, ,
\end{aligned}
\end{equation}
where $\vecee \gamma^{j,i}$ is the solution to problem~\eqref{eq:cell-problems-gamma}.
In a standard way, the solution to~\eqref{eq:cell-gamma} is extended to the pore space $\Omepspm$, i.e., $\vecee W^{j,i,\varepsilon} (\vecee x)= \vecee W^{j,i} \left(\frac{\vecee x}{\varepsilon} \right)$, $\Lambda^{j,i,\varepsilon} (\vecee x) = \Lambda^{j,i} \left(\frac{\vecee x}{\varepsilon} \right)$ for $\vecee x \in \Omepspm$. Moreover, we set $\vecee W^{j,i,\varepsilon} = \vecee 0$ in $\Omega_\PM \setminus \Omepspm$, and introduce the boundary layer corrector
\begin{align}\label{eq:corrector-W}
\left\{ {\varepsilon^3}{\sum_{i,j=1}^2 \vecee W^{j,i,\varepsilon}}{\dpxij} , \
 {\varepsilon^2}{\sum_{i,j=1}^2 \Lambda^{j,i,\varepsilon}}{\dpxij} \right\} \, .
\end{align}
Corresponding to this corrector the following terms appear in the weak formulation for the error functions
\begin{align}
    &-\intpmvel{\varepsilon^3}{\sum_{i,j=1}^2 \vecee W^{j,i,\varepsilon}}{\dpxij}
    + \intpmp{\varepsilon^2}{\sum_{i,j=1}^2 \Lambda^{j,i,\varepsilon}}{\dpxij}
    -\int_{\Omepspm} \varepsilon \sum_{i,j=1}^2 \underbrace{\left( \Delta_{\vecee y} \vecee W^{j,i} - \nabla_{\vecee y} \Lambda^{j,i} \right)}_{=- \Delta_{\vecee y} \vecee \gamma^{j,i}} \dpxij
    \vdot \vecee \varphi \notag
    \\
    & \  = 
    -\int_{\Sigma} \varepsilon^3 \sum_{i,j=1}^2 \left( \nabla \vecee W^{j,i,\varepsilon} \dpxij + \vecee W^{j,i,\varepsilon} \otimes \nabla \dpxij  \right) \vecee e_2 \vdot \vecee \varphi 
    +\int_{\Gamma_H} \varepsilon^3 \sum_{i,j=1}^2 \left( \nabla \vecee W^{j,i,\varepsilon} \dpxij + \vecee W^{j,i,\varepsilon} \otimes \nabla \dpxij  \right) \vecee e_2 \vdot \vecee \varphi 
    \notag 
    \\
    & \quad +\int_{\Omepspm} \varepsilon^3 \sum_{i,j=1}^2 \left(2 \nabla \vecee W^{j,i,\varepsilon} \nabla \dpxij  + \vecee W^{j,i,\varepsilon} \Delta \dpxij\right) \vdot \vecee \varphi 
    +  \int_{\Sigma} \varepsilon^2 \sum_{i,j=1}^2 \Lambda^{j,i,\varepsilon} \dpxij  \vecee e_2 \vdot \vecee \varphi
    \notag 
    \\
    & \quad
    - \int_{\Omepspm} \varepsilon^2 \sum_{i,j=1}^2 \Lambda^{j,i,\varepsilon} \nabla  \dpxij  \vdot \vecee \varphi \, .
    \label{eq:rhs-W-K}
\end{align}
The third term on the left-hand side of~\eqref{eq:rhs-W-K} is reformulated as follows
\[
    -\int_{\Omepspm} \varepsilon \sum_{i,j=1}^2 \underbrace{\left( \Delta_{\vecee y} \vecee W^{j,i} - \nabla_{\vecee y} \Lambda^{j,i} \right)}_{=- \Delta_{\vecee y} \vecee \gamma^{j,i}} \dpxij
    \vdot \vecee \varphi 
    = 
    \int_{ \Omepspm}  \underbrace{\varepsilon^2 \sum_{i,j=1}^2  \Delta \vecee \gamma^{j,i,\varepsilon} \dpxij}_{=\vecee F_\varepsilon} \vdot \vecee \varphi \, .
\]
Since we have
\begin{align*}
    \| \vecee W^{j,i,\varepsilon} \|_{L^2(\Omepspm)^2} \leq C  \, ,
    \quad 
    \| \nabla  \vecee W^{j,i,\varepsilon} \|_{L^2(\Omepspm)^{2\times 2}}  \leq C \varepsilon^{-1} \, ,
    \quad 
     \| \Lambda^{j,i,\varepsilon} \|_{L^2(\Omepspm)} &\leq C   \, ,
\end{align*}
and we assume the estimates presented in Remark~\ref{rem} hold, all terms on the right-hand side of~\eqref{eq:rhs-W-K} can be estimated at least by $C \varepsilon^{5/2} \normgradphipm$. Note that we also used the estimates~\eqref{eq:Poincare-estimate} here.


\subsubsection{Elimination of integral term including $\vecee G_\varepsilon$}
In the weak formulation~\eqref{eq:weak-form-approx-6}, we observe that the term including $\vecee G_\varepsilon$ is of low estimation order w.r.t.~$\varepsilon$. In order to eliminate this term, we construct the following boundary layer problem
\begin{equation}\label{eq:BLP-c}
\begin{aligned}
    \Delta_{\vecee y} \vecee c^{\bl} - \nabla_{\vecee y} b^{\bl} &= \Delta_{\vecee y} \vecee \zeta^{\bl} \quad \text{ in } Z^+ \cup Z^- \, , 
    \\
    \divy \vecee c^{\bl} &=  0 \qquad \quad  \text{in } Z^+ \cup Z^- \, ,
    \\
    \llbracket   \vecee c^{\bl} \rrbracket_S  &= \vecee 0  \qquad \quad  \text{on } S \, , 
    \\
    \llbracket  \nabla_{\vecee y} \vecee c^{\bl} -  b^{\bl} \ten I\rrbracket_S \vecee e_2 &= \vecee 0  \qquad \quad  \text{on } S \, , 
    \\
    \vecee c^{\bl} = \vecee 0 \quad \text{ on } \cup_{k=1}^{\infty}(\partial Y_\text{s} - (0,k)), &\quad \{\vecee c^{\bl}, b^{\bl} \} \text{ is 1-periodic in $y_1$} \, ,
\end{aligned}
\end{equation}
where $\vecee \zeta^{{\bl}}$ is the solution to~\eqref{eq:BLP-zeta}.
Problem~\eqref{eq:BLP-c} is a boundary layer problem in the sense of~\cite[problem (AUX)]{Jaeger_Mikelic_96} due to the following argument. 
We differentiate the first equation of~\eqref{eq:BLP-zeta} two times w.r.t. $y_1$ and obtain
\begin{align}\label{eq:zeta-fits-AUX}
    \frac{\partial^2}{\partial y_1^2}\left(  \divy  \vecee  \zeta^{\bl} \right)=  \divy  \left(  \frac{\partial^2 \vecee \zeta^{\bl} }{\partial y_1^2} \right)  = \frac{\partial^2 t_1^{\bl} }{\partial y_1^2} \, .
\end{align}
We know from~\cite[Corollary~3.8]{Jaeger_etal_01} that for $\gamma^\text{lower} =\frac{1}{2}\ln\left(\frac{C^\text{lower}+1}{C^\text{lower}} \right)$ with constant $C^\text{lower}>0$ 
it holds
\[
|D^{\alpha} \vecee t^{\bl} (y_1, y_2) | \leq C(\bar a, \alpha) e^{- \gamma^\text{lower} |y_2|} \, , 
\]
for any $\bar a<0$, $\alpha \in \mathbb{N}$ and $y_2<\bar a$.
For this reason, we also know that $ e^{\gamma |y_2|} \Delta \vecee \zeta^{\bl} \in L^2(Z^+ \cup Z^-) $ and after~\cite[section~3]{Jaeger_Mikelic_96} problem~\eqref{eq:BLP-c} is a boundary layer problem.
Thus, the velocity and pressure stabilize exponentially to zero in the porous part and to the following boundary layer constants in the free-flow part of the boundary layer stripe
\begin{align}\label{eq:BLP-constants-E}
    \vecee E^{\bl} = \left(E_1^\bl,0 \right)^\top = \left( \int_S c_1^{\bl}  d y_1, 0 \right)^\top \, , 
    \qquad 
    E_b^{\bl} =  \int_S b^{\bl}  d y_1\, .
\end{align}
We extend the velocity $\vecee c^{\bl}$ to zero in the solid inclusions as usual and write $ \vecee c^{\bl,\varepsilon} (\vecee x) = \vecee c^{\bl} \lrp{\frac{\vecee x}{\varepsilon}}$ and $b^{\bl,\varepsilon} (\vecee x) = b^{\bl} \lrp{\frac{\vecee x}{\varepsilon}} $.
Then, following~\citep{Jaeger_Mikelic_96} it holds
\begin{equation}\label{eq:estimates-c}
\begin{aligned}
     \| \vecee c^{\bl, \varepsilon} - \mathcal{H}(x_2)\vecee E^\bl \|_{L^2(\Omega)^2} &\leq C \varepsilon^{1/2}  \, ,
     \qquad 
     \| \nabla \vecee c^{\bl, \varepsilon} \|_{L^2(\Omega)^{2\times 2}} \leq C \varepsilon^{-1/2} \, ,
     \\
     \| b^{\bl, \varepsilon} - \mathcal{H}(x_2) E_b^\bl \|_{L^2(\Omega^\varepsilon)} &\leq C \varepsilon^{1/2}  \, .
\end{aligned}
\end{equation}


In order to eliminate the integral term with~$\vecee G_\varepsilon$  in~\eqref{eq:weak-form-approx-6}, we define the following corrector
\begin{align}\label{eq:corrector-c}
 \left\{ 
 {\varepsilon^2 }{\left( \vecee c^{\bl,\varepsilon} - \mathcal{H} (x_2)\vecee E^{\bl} \right)}{\dvdto}   , \
 {\varepsilon}{\left( b^{\bl,\varepsilon} - \mathcal{H}(x_2) E_b^{\bl} \right)}{\dvdto} 
 \right\} \, .
\end{align}
The terms in the weak formulation corresponding to this corrector read
\begingroup
\allowdisplaybreaks
\begin{align}
    & \intvel{\varepsilon^2 }{\left( \vecee c^{\bl,\varepsilon} - \mathcal{H} (x_2)\vecee E^{\bl} \right)}{\dvdto} 
    - \intp{\varepsilon}{\left( b^{\bl,\varepsilon} - \mathcal{H}(x_2) E_b^{\bl} \right)}{\dvdto} 
    \notag 
    \\ 
    & \quad 
    + \int_\Sigma \underbrace{\varepsilon E_b^{\bl}  \dvdto}_{=\colon I^7} \varphi_2
    + \int_{\Omega^\varepsilon} \underbrace{ \varepsilon^2 \Delta \vecee \zeta^{\bl,\varepsilon}  \dvdto}_{=\vecee G_\varepsilon} \vdot \vecee \varphi 
    \notag
    \\
    =&\quad 
     \intdel{\varepsilon}{\vecee c^{\bl,\varepsilon} - \mathcal{H} (x_2)\vecee E_b^{\bl} }{\dvdto} 
     + \intppm{\varepsilon}{b^{\bl,\varepsilon}}{\dvdto} 
     \notag
    \\
    & \quad 
    +\intotim{\varepsilon^2}{\vecee c^{\bl,\varepsilon} - \mathcal{H} (x_2)\vecee E^{\bl}}{\dvdto} \notag
    \\
    & \quad
    + \intpff{\varepsilon}{b^{\bl,\varepsilon} -  E_b^{\bl}}{\dvdto}
    + \text{exponentially small terms} \, .
    \label{eq:rhs-c}
\end{align}
\endgroup
The integral $\int_{\Omega^\varepsilon} \vecee G_\varepsilon \vdot \vecee \varphi$ will cancel with the same integral appearing in the weak formulation for the new error functions~$\vecee U^{7,\varepsilon}$ and $P^{7,\varepsilon}$ defined in~\eqref{eq:error-7-U} and~\eqref{eq:error-7-P}. 
Using Remark~\ref{rem} and estimates~\eqref{eq:estimates-c}, the integrals on the right-hand side of~\eqref{eq:rhs-c} can be estimated at least by
\begin{align}
    |RHS\eqref{eq:rhs-c} | \leq C \varepsilon^{5/2} \normgradphi \, .
\end{align}

With the correctors~\eqref{eq:corrector-w},~\eqref{eq:corrector-nu},~\eqref{eq:corrector-xi},~\eqref{eq:corrector-W},~\eqref{eq:corrector-c} defined using the solutions to the boundary layer problems~\eqref{eq:BLP-nu},~\eqref{eq:BLP-xi},~\eqref{eq:BLP-c} and cell problems~\eqref{eq:cell-w},~\eqref{eq:cell-gamma}, we introduce the following new error functions
\begin{align}
     \vecee U^{7,\varepsilon} &= \vecee U^{6,\varepsilon}
     + \mathcal{H}(-x_2) {\varepsilon^3}{\sum_{i,j=1}^2 \vecee w^{j,i,\varepsilon}}{\dpxijdom} \notag
     +   \varepsilon^3 \left( \vecee \nu^{\bl,\varepsilon} - \mathcal{H}(x_2) \vecee R^{\bl} \right) \frac{\partial^2}{\partial x_1^2}  \frac{\partial v_1^\FF}{\partial x_2}\bigg|_\Sigma 
     +  \varepsilon^2 \left( \vecee \xi^{\bl,\varepsilon} - \mathcal{H}(x_2) \vecee L^{\bl} \right) \frac{\partial}{\partial x_1}  \frac{\partial v_1^\FF}{\partial x_2}\bigg|_\Sigma  
     \notag
     \\
     & \quad 
     - \mathcal{H}(-x_2)\varepsilon^3 {\sum_{i,j=1}^2 \vecee W^{j,i,\varepsilon}}{\dpxij}
     + \varepsilon^2 \left( \vecee c^{\bl,\varepsilon} - \mathcal{H}(x_2) \vecee E^{\bl} \right) \dvdto  
     \, ,
     \label{eq:error-7-U}
     \\
     P^{7,\varepsilon} &= P^{6,\varepsilon} 
     + \mathcal{H}(-x_2) {\varepsilon^2}{\sum_{i,j=1}^2 \pi^{j,i,\varepsilon}}{\dpxijdom} \notag
     +  \varepsilon^2 \left( \sigma^{\bl,\varepsilon} - \mathcal{H}(x_2)  R_\sigma^{\bl} \right) \frac{\partial^2}{\partial x_1^2}  \frac{\partial v_1^\FF}{\partial x_2}\bigg|_\Sigma 
     +  \varepsilon \left( \eta^{\bl,\varepsilon} - \mathcal{H}(x_2)  L_\eta^{\bl} \right) \frac{\partial}{\partial x_1}  \frac{\partial v_1^\FF}{\partial x_2}\bigg|_\Sigma 
     \notag
     \\
     & \quad 
     - \mathcal{H}(-x_2) \varepsilon^2 {\sum_{i,j=1}^2 \Lambda^{j,i,\varepsilon}}{\dpxij} +\varepsilon \left( b^{\bl,\varepsilon} - \mathcal{H}(x_2) E_b^{\bl} \right) \dvdto 
     \label{eq:error-7-P}
      \, . 
\end{align}

We obtain the following result for the new velocity and pressure errors $\vecee U^{7,\varepsilon} $ and $P^{7,\varepsilon}$.
\begin{corollary}\label{cor-1}
    For all $\vecee \varphi \in \Vper$ it holds
    \begin{align}
        \bigg| \int_{\Omega^\varepsilon} &\nabla \vecee U^{7,\varepsilon} \colon \nabla \vecee \varphi 
        - \int_{\Omega^\varepsilon} P^{7,\varepsilon} \nabla \vdot \vecee \varphi - \int_{\Sigma} \sum_{i=1}^5 I^i   \varphi_2  
        + \int_{\Sigma} \varepsilon \left( L_\eta^\bl + E_b^\bl \right) \dvdto  \varphi_2          \bigg|
        \notag 
        \\
        &
        \leq C \varepsilon^{5/2} \normgradphi + \text{exponentially small terms} \, .
        \label{eq:cor-1}
    \end{align}
\end{corollary}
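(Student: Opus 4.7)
The plan is simply to assemble the five correction steps already carried out in Sections~4.2.1--4.2.5 with the baseline estimate~\eqref{eq:weak-form-approx-6-estimate} for $\{\vecee U^{6,\varepsilon}, P^{6,\varepsilon}\}$. By the definitions~\eqref{eq:error-7-U}--\eqref{eq:error-7-P}, the new error functions $\{\vecee U^{7,\varepsilon}, P^{7,\varepsilon}\}$ differ from $\{\vecee U^{6,\varepsilon}, P^{6,\varepsilon}\}$ by exactly the five correctors~\eqref{eq:corrector-w},~\eqref{eq:corrector-nu},~\eqref{eq:corrector-xi},~\eqref{eq:corrector-W},~\eqref{eq:corrector-c} whose weak-formulation identities have already been computed.

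First I would rewrite the quantity to be bounded by substituting~\eqref{eq:error-7-U}--\eqref{eq:error-7-P} and using linearity of the bilinear form. This produces the left-hand side of~\eqref{eq:weak-form-approx-6-estimate} plus the left-hand sides of~\eqref{eq:rhs-w-kappa},~\eqref{eq:rhs-nu},~\eqref{eq:rhs-xi},~\eqref{eq:rhs-W-K},~\eqref{eq:rhs-c}. The key structural observation, already recorded in each subsection, is that each corrector has been designed precisely so that one term on the left of its weak identity coincides, with opposite sign, with one of the five low-order integrals $\int \vecee C_\varepsilon \vdot \vecee\varphi$, $\int \vecee D_\varepsilon \vdot \vecee\varphi$, $\int \vecee E_\varepsilon \vdot \vecee\varphi$, $\int \vecee F_\varepsilon \vdot \vecee\varphi$, $\int \vecee G_\varepsilon \vdot \vecee\varphi$ in~\eqref{eq:weak-form-approx-6-estimate}. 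After these five cancellations, what survives on the left are the original interface integrals $\int_\Sigma I^i \varphi_2$ for $i=1,\ldots,5$, together with the two new $\Sigma$-integrals $\int_\Sigma I^6 \varphi_2 = \int_\Sigma \varepsilon L_\eta^\bl \dvdto \varphi_2$ generated by~\eqref{eq:rhs-xi} and $\int_\Sigma I^7 \varphi_2 = \int_\Sigma \varepsilon E_b^\bl \dvdto \varphi_2$ generated by~\eqref{eq:rhs-c}, which combine into the single boundary term $\int_\Sigma \varepsilon(L_\eta^\bl + E_b^\bl) \dvdto \varphi_2$ appearing in~\eqref{eq:cor-1}.

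Next, every remaining right-hand side contribution from~\eqref{eq:rhs-w-kappa},~\eqref{eq:rhs-nu},~\eqref{eq:rhs-xi},~\eqref{eq:rhs-W-K},~\eqref{eq:rhs-c} has already been bounded pointwise by $C\varepsilon^{5/2}\normgradphi$ in its respective subsection, using the regularity assumptions of Remark~\ref{rem}, the Poincaré-type bound~\eqref{eq:Poincare-estimate}, and the cell/boundary-layer estimates~\eqref{eq:estimates-w},~\eqref{eq:estimates-nu},~\eqref{eq:estimates-xi},~\eqref{eq:estimates-c} together with the corresponding bounds on $\vecee W^{j,i,\varepsilon}$ and $\Lambda^{j,i,\varepsilon}$. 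The integrals over $\Gamma_H$ of exponentially decaying boundary-layer functions are absorbed into the ``exponentially small terms'' notation. Summing the baseline bound $C\varepsilon^{5/2}\normgradphi$ from~\eqref{eq:weak-form-approx-6-estimate} with these five $O(\varepsilon^{5/2})$ contributions and the exponentially small remainders yields~\eqref{eq:cor-1}.

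There is essentially no analytical obstacle: the corollary is an exercise in bookkeeping. The only point that requires care is verifying that the prefactors and signs in~\eqref{eq:error-7-U}--\eqref{eq:error-7-P} are chosen so that the five cancellations are exact and no stray terms outside of the two new $\Sigma$-boundary contributions $\int_\Sigma \varepsilon L_\eta^\bl \dvdto \varphi_2$ and $\int_\Sigma \varepsilon E_b^\bl \dvdto \varphi_2$ remain. Once this alignment is confirmed term by term with the five corrector identities, the estimate follows immediately by the triangle inequality.
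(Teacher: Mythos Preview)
Your proposal is correct and follows essentially the same approach as the paper's proof: combine the weak-form identities~\eqref{eq:weak-form-approx-6},~\eqref{eq:rhs-w-kappa},~\eqref{eq:rhs-nu},~\eqref{eq:rhs-xi},~\eqref{eq:rhs-W-K},~\eqref{eq:rhs-c}, observe the designed cancellations of the five low-order terms, and sum the $O(\varepsilon^{5/2})$ bounds already established for each right-hand side. The paper's own proof is a one-line statement of exactly this bookkeeping.
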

\begin{proof}
    Combining~\eqref{eq:weak-form-approx-6},~\eqref{eq:rhs-w-kappa},~\eqref{eq:rhs-nu},~\eqref{eq:rhs-xi},~\eqref{eq:rhs-W-K},~\eqref{eq:rhs-c} and using the previously derived estimates 
    \begin{align*}
    &|RHS\eqref{eq:weak-form-approx-6}| + |RHS\eqref{eq:rhs-w-kappa}| + |RHS\eqref{eq:rhs-nu}| + |RHS\eqref{eq:rhs-xi}| + |RHS\eqref{eq:rhs-W-K}|+ |RHS\eqref{eq:rhs-c}|
    \\
    & \leq C \varepsilon^{5/2} \normgradphi  + \text{exponentially small terms} \, ,
    \end{align*}
    yields estimate~\eqref{eq:cor-1}.
\end{proof}

In order to eliminate the integrals over $\Sigma$ appearing on the left-hand side of~\eqref{eq:cor-1}, we set the following condition
\begin{align}
    0 &= \sum_{i=1}^5 I^i  + \varepsilon \left( L_\eta^\bl + E_b^\bl \right) \dvdto \notag 
    \\
    &=  p^\PM +  \frac{\partial v_2^\FF}{\partial x_2}\bigg|_\Sigma  - p^\FF 
    -N_s^\bl \frac{\partial v_1^\FF}{\partial x_2}\bigg|_\Sigma
    + \varepsilon \sum_{j=1}^2  M_\omega^{j,\bl} \frac{\partial p^\PM}{\partial x_j}\bigg|_\Sigma
    -\varepsilon \left( N_1^\bl + L_\eta^\bl + E_b^\bl \right) \dvdto \qquad \text{on } \Sigma  \, .
    \label{eq:set-pressure-IC}
\end{align}
Here, we used the definitions of $I^i$ for $i=1,\ldots,5$ introduced in~\eqref{eq:weak-form-approx-6} and the equality
\begin{align*}
    \big\llbracket \varepsilon^2 \nabla \vecee \zeta^{\bl,\varepsilon} \vecee e_2 \rrbracket_\Sigma 
    = 
    \big\llbracket \varepsilon \nabla_{\vecee y} \vecee \zeta^{\bl} (\vecee y) \vecee e_2 \rrbracket_S 
    \overset{\eqref{eq:zeta-gradient-jump}}{=} 
    - \varepsilon N_1^\bl \vecee e_2 \, .
\end{align*}
In this way, interface condition~\eqref{eq:IC-HO-p} is derived.

To obtain estimates for the error functions defined by~\eqref{eq:error-7-U} and~\eqref{eq:error-7-P}, we need to use the velocity error $\vecee U^{7,\varepsilon}$ as a test function in~\eqref{eq:cor-1}. The difficulty with $\vecee U^{7,\varepsilon}$ is that its trace on~$\Sigma$ is not correct yet and the boundary conditions at the lower boundary $\Gamma_H$ are not satisfied. Hence, we have to adjust the values of $\vecee U^{7,\varepsilon}$ at the interface and the lower boundary.

\subsection{Correction of trace and outer boundary effects}
\label{sec:trace-and-lower-boundary}
In this section, we correct the velocity trace as well as the velocity error function $\vecee U^{7,\varepsilon} $ on the lower boundary $\Gamma_H$.
These corrections are needed in order to get a velocity error that is an element of the test function space $\Vper$ defined in~\eqref{eq:Vperio}.

\subsubsection{Trace correction}
Since our goal is $\vecee U^{7,\varepsilon} \in \Vper$, we need the continuity of velocity trace across the interface, i.e., $ \llbracket \vecee U^{7,\varepsilon} \rrbracket_\Sigma = \vecee 0 $. At the moment, 
we have
\begingroup
\allowdisplaybreaks
\begin{align}
     \llbracket \vecee U^{7,\varepsilon} \rrbracket_\Sigma 
    &= 
     -\vecee{v}^\FF 
    - \varepsilon \vecee N^{\bl}  \frac{\partial v_1^\FF}{\partial x_2}\bigg|_\Sigma 
    - \varepsilon^2 \sum_{j=1}^2  k_{2j}\vecee e_2  \frac{\partial p^\PM }{\partial x_j}\bigg|_\Sigma 
    + \varepsilon^2 \sum_{j=1}^2 \vecee M^{j,\bl} \frac{\partial p^\PM }{\partial x_j}\bigg|_\Sigma 
    - \varepsilon^2
    \underbrace{\llbracket \vecee \zeta^{\bl,\varepsilon} \rrbracket_\Sigma}_{=W^\bl \vecee e_2} \frac{\partial}{\partial x_1} \frac{\partial v_1^\FF}{\partial x_2}\bigg|_\Sigma 
     + \varepsilon^3  \sum_{i,j=1}^2   \vecee  C^{j,i,\bl} \frac{\partial^2 p^\PM}{\partial x_i x_j}\bigg|_\Sigma
     \notag
     \\
     & \quad 
     -  \varepsilon^3 \vecee R^{\bl} \frac{\partial^2}{\partial x_1^2}  \frac{\partial v_1^\FF}{\partial x_2}\bigg|_\Sigma 
     -  \varepsilon^2  \left( \vecee L^{\bl} + \vecee E^{\bl} \right)\frac{\partial}{\partial x_1}  \frac{\partial v_1^\FF}{\partial x_2}\bigg|_\Sigma 
     - {\varepsilon^3}{\sum_{i,j=1}^2 \vecee w^{j,i,\varepsilon}}{\dpxijdom}\bigg|_\Sigma
     + \varepsilon^3 {\sum_{i,j=1}^2 \vecee W^{j,i,\varepsilon}}{\dpxij}
      \, . 
      \label{eq:trace-U-7}
\end{align}
We observe that it is not possible to formulate interface conditions for the velocity components such that $ \llbracket \vecee U^{7,\varepsilon} \rrbracket_\Sigma = \vecee 0$ since the functions $\vecee w^{j,i,\varepsilon}$, $\vecee W^{j,i,\varepsilon}$ depend on $\vecee y$ and do not cancel each other.
Thus,
we need to eliminate the last two terms appearing in equation~\eqref{eq:trace-U-7} 
such that velocity trace continuity can be enforced (see eq.~\eqref{eq:set-velocity-IC}).
We correct the trace of $\vecee U^{7,\varepsilon}$ in the same way as in~\cite[section 3.2.3]{Eggenweiler_Rybak_MMS20}, i.e., we define two boundary layer problems similar to~\eqref{eq:BLP-beta}. 
These problems are defined for $i,j=1,2$ and read
\begin{equation}\label{eq:BLP-beta-w-W}
    \begin{aligned}
    -\Delta_{\vecee y} \vecee \beta_{m}^{j,i,\bl} + \nabla_{\vecee y} \omega_{m}^{j,i,\bl} &= \vecee 0  \quad \ \ \, \text{ in } Z^{+}  \cup Z^{-} \, , 
    \\ 
    \divy \vecee \beta_{m}^{j,i,\bl} &= 0 \quad \ \ \, \text{ in } Z^{+}  \cup Z^{-} \, , 
    \\
    \big\llbracket \vecee \beta_{m}^{j,i,\bl} \big\rrbracket_S &= \vecee j_{m}^{j,i}
    \quad \ \ \text{on } S \, , 
    \\
    \big\llbracket ( \nabla_{\vecee y} \vecee \beta_{m}^{j,i,\bl} - \omega_{m}^{j,i,\bl} \ten{I} ) \vecee{e}_2\big\rrbracket_S &= \vecee J_{m}^{j,i}
    \quad \ \ \text{on } S \, , 
    \\
    \vecee \beta_{m}^{j,i,\bl} = \vecee 0 \quad \text{on } \cup_{k=1}^{\infty}(\partial Y_\text{s} - (0,k)) \, ,& \qquad \{\vecee \beta_{m}^{j,i,\bl}, \omega_{m}^{j,i,\bl}\} \text{ is 1-periodic in $y_1$} \, ,
    \end{aligned}
\end{equation}
where the velocity jump $\vecee j_{m}^{j,i}$ and the stress jump $\vecee J_{m}^{j,i}$ depend on $m \in \{w, W\}$.
To eliminate the second last term in~\eqref{eq:trace-U-7}, we solve problems~\eqref{eq:BLP-beta-w-W} with $m=w$ and the following jumps across the interface $S$:
\begin{align}\label{eq:BLP-beta-w-jump}
    \vecee j_{w}^{j,i} = - \vecee w^{j,i} 
    \, ,\quad 
    \vecee J_{w}^{j,i} = - \left( \nabla_{\vecee y} \vecee w^{j,i} - \pi^{j,i} \ten I \right) \vecee{e}_2 
    \, .
\end{align}
In order to correct the last term in~\eqref{eq:trace-U-7}, we use the solutions to boundary layer problems~\eqref{eq:BLP-beta-w-W} with $m=W$ and
\begin{align}\label{eq:BLP-beta-W-jump}
    \vecee j_{W}^{j,i}  = - \vecee W^{j,i} 
    \, , \quad \
    \vecee J_{W}^{j,i}  = - \left( \nabla_{\vecee y} \vecee W^{j,i} - \Lambda^{j,i} \ten I \right) \vecee{e}_2 
    \, .
\end{align}
Clearly, problems~\eqref{eq:BLP-beta-w-W} with~\eqref{eq:BLP-beta-w-jump} respective~\eqref{eq:BLP-beta-W-jump} are boundary layer problems in the sense of~\citep{Jaeger_Mikelic_96} and we have all the boundary layer problem properties, i.e., exponential stabilization for $|y_2| \to \infty $ to constants and estimates for the solution analogous to~\eqref{eq:estimates-t}.

We denote the boundary layer constants corresponding to problems~\eqref{eq:BLP-beta-w-W} for $i,j=1,2$ and $m \in \{w,W\}$ as follows
\begin{align}
    \vecee{M}_{m}^{j,i,\bl} &= \bigg(\int_S {\beta_{m,1}}^{j,i, \bl}(y_1, +0) \ \text{d} y_1, 0\bigg)^\top \, ,  
    \quad \
    M_{\omega,m}^{j,i,\bl} = \int_S \omega_{m}^{j,i,\bl}(y_1, +0) \ \text{d} y_1  \, ,
    \label{eq:BLP-constants-w-W}
\end{align}
where we used $\vecee \beta_{m}^{j,i, \bl} = \lrp{ {\beta_{m,1}}^{j,i, \bl} , {\beta_{m,2}}^{j,i, \bl} }^\top$.
As usual, we set $\vecee \beta_{m}^{j,i,\bl,\varepsilon} (\vecee x) = \vecee \beta_{m}^{j,i,\bl} \left( \frac{\vecee x}{\varepsilon} \right)$ and $\omega_{m}^{j,i,\bl,\varepsilon} (\vecee x) = \omega_{m}^{j,i,\bl} \left( \frac{\vecee x}{\varepsilon} \right)$ for $m \in \{w, W\}$, $\vecee x \in \Omega^\varepsilon$ and $i,j=1,2$, and the velocity is extended to zero in the solid inclusions.
Then, as it is well-known for boundary layer solutions~\citep{Jaeger_Mikelic_96}, the following estimates hold
\begin{equation}\label{eq:estimates-w-W}
\begin{aligned}
     \| \vecee \beta_{m}^{j,i, \bl,\varepsilon} - \mathcal{H}(x_2)\vecee{M}_{m}^{j,i,\bl} \|_{L^2(\Omega)^2} &\leq C \varepsilon^{1/2}  \, ,
     \qquad 
     \| \nabla \vecee \beta_{m}^{j,i, \bl,\varepsilon}  \|_{L^2(\Omega)^{2\times 2}} \leq C \varepsilon^{-1/2} \, ,
     \\
     \| \omega_{m}^{j,i,\bl,\varepsilon} - \mathcal{H}(x_2) M_{\omega,m}^{j,i,\bl} \|_{L^2(\Omega^\varepsilon)} &\leq C \varepsilon^{1/2}  \, .
\end{aligned}
\end{equation}

We define new velocity and pressure error functions
\begin{align}
     \vecee U^{8,\varepsilon} &= \vecee U^{7,\varepsilon}
     - \varepsilon^3 {\sum_{i,j=1}^2 \left( \vecee \beta_{w}^{j,i,\varepsilon} - \mathcal{H}(x_2) \vecee M_{w}^{j,i,\bl} \right)}{\dpxijdom}\bigg|_\Sigma 
     + \varepsilon^3 {\sum_{i,j=1}^2 \left( \vecee \beta_{W}^{j,i,\varepsilon} - \mathcal{H}(x_2) \vecee M_{W}^{j,i,\bl} \right)}{\dpxij} 
     \, ,
     \label{eq:error-8-U}
     \\
     P^{8,\varepsilon} &= P^{7,\varepsilon} 
     - \varepsilon^2 {\sum_{i,j=1}^2 \left( \omega^{j,i,\varepsilon} - \mathcal{H}(x_2) M_{\omega, W}^{j,i,\bl} \right)}{\dpxijdom}\bigg|_\Sigma
     + \varepsilon^2 {\sum_{i,j=1}^2 \left( \omega^{j,i,\varepsilon} - \mathcal{H}(x_2) M_{\omega, W}^{j,i,\bl} \right)}{\dpxij} 
     \label{eq:error-8-P}
      \, . 
\end{align}
Due to~\eqref{eq:estimates-w-W}, both corrections to $\vecee U^{7,\varepsilon}$ in~\eqref{eq:error-8-U} and~\eqref{eq:error-8-P}
are of order $\textit{O}(\varepsilon^{7/2})$ 
for the velocity and of order $\textit{O}(\varepsilon^{5/2})$ 
for the pressure. Thus, the corrections do not change the result presented in Corollary~\ref{cor-1}.

\subsubsection{Correction on the lower boundary $\Gamma_H$}
\label{sec:lower-boundary}
Next, we need to correct the values of $\vecee U^{8,\varepsilon}$ and its gradient on the lower boundary $\Gamma_H$ to make the velocity error function an element of the test function space $\Vper$. At the moment, we have
\begin{align}
    U_2^{8,\varepsilon}\big|_{\Gamma_H} &=  
     {\varepsilon^3}{\sum_{i,j=1}^2  w_2^{j,i,\varepsilon}\big|_{\Gamma_H}}{\dpxijdom}\bigg|_{\Gamma_H} 
     - \varepsilon^3 {\sum_{i,j=1}^2  W_2^{j,i,\varepsilon}\big|_{\Gamma_H}}{\dpxij}
    + \text{exponentially small terms} \, ,
    \\
    \frac{\partial U_1^{8,\varepsilon}}{\partial x_2}\bigg|_{\Gamma_H}
    &=  {\varepsilon^2}{\sum_{i,j=1}^2  \frac{\partial w_1^{j,i}}{\partial y_2} (y_1,0)}{\dpxijdom}\bigg|_{\Gamma_H} 
    +{\varepsilon^3}{\sum_{i,j=1}^2  w_1^{j,i} (y_1,0)}\frac{\partial }{\partial x_2}{\dpxijdom}\bigg|_{\Gamma_H}
    - \varepsilon^2 \sum_{i,j=1}^2  \frac{\partial W_1^{j,i}}{\partial y_2}(y_1,0){\dpxij}
    \notag
    \\
    & \quad 
     - \varepsilon^3 \sum_{i,j=1}^2  W_1^{j,i}(y_1, 0) \frac{\partial}{\partial y_2}{\dpxij} 
    + \text{exponentially small terms} \, .
\end{align}
The corrections of the outer boundary effects follow exactly the same lines as in detail presented in~\cite[section 3.2.4]{Eggenweiler_Rybak_MMS20}. In total, we construct two boundary layer problems similar to~\eqref{eq:BLP-q}. Since the resulting boundary layer correctors neither contribute to the effective interface conditions~\eqref{eq:IC-HO-normal}--\eqref{eq:IC-HO-tangential} nor play a role for later calculations, we do not provide the corresponding boundary layer problems here. 
We highlight that both boundary layer correctors are at least of order $\textit{O}(\varepsilon^{5/2})$  for the velocity and of order $\textit{O}(\varepsilon^{3/2})$  for the pressure, and hence, do not deteriorate result~\eqref{eq:cor-1}.

The velocity and pressure errors including the two above-mentioned correctors are denoted by $\vecee U^{9,\varepsilon}$ and $P^{9,\varepsilon}$.
Then, by setting
\begin{align}\label{eq:set-velocity-IC}
   \llbracket \vecee U^{9,\varepsilon} \rrbracket_\Sigma = \vecee 0 \, ,
\end{align}
we obtain the interface conditions~\eqref{eq:IC-HO-normal} and~\eqref{eq:IC-HO-tangential}, and ensure that $\vecee U^{9,\varepsilon} \in \Vper$.
Using equations~\eqref{eq:set-pressure-IC},~\eqref{eq:set-velocity-IC} and Corollary~\ref{cor-1}, we obtain the following result.
\begin{corollary}\label{cor-2}
    We have $\vecee U^{9,\varepsilon} \in \Vper$ and for all  $\vecee \varphi \in V_\text{per}(\Omega^\varepsilon)$ the following estimate holds
    \begin{align}
        \bigg| \int_{\Omega^\varepsilon} &\nabla \vecee U^{9,\varepsilon} \colon \nabla \vecee \varphi 
        - \int_{\Omega^\varepsilon} P^{9,\varepsilon} \nabla \vdot \vecee \varphi 
        \bigg|
        \leq C \varepsilon^{5/2} \normgradphi 
        + \text{exp. small terms} \, .
        \label{eq:cor-2}
    \end{align}
\end{corollary}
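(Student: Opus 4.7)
The plan is to derive Corollary~\ref{cor-2} as a direct consequence of Corollary~\ref{cor-1} together with the two auxiliary constructions (trace correction and outer boundary correction) carried out just above its statement. The key observation is that the left-hand side in Corollary~\ref{cor-1} carries two ``unwanted'' interface contributions over $\Sigma$, and these are precisely the quantities that are forced to vanish by the pressure interface condition. Indeed, expanding $\sum_{i=1}^5 I^i + \varepsilon(L_\eta^\bl + E_b^\bl)\dvdto$ as in~\eqref{eq:set-pressure-IC} gives exactly the residual in condition~\eqref{eq:IC-HO-p}, and since $\{\vecee v^\FF, p^\FF, \vecee v^\PM, p^\PM\}$ solves the macroscopic Stokes--Darcy system with that condition imposed on $\Sigma$, the entire $\Sigma$-integral on the left-hand side of~\eqref{eq:cor-1} vanishes identically. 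After this cancellation, Corollary~\ref{cor-1} already yields the bound $|\int_{\Omeps} \nabla \vecee U^{7,\varepsilon}\colon \nabla \vecee \varphi - \int_{\Omeps} P^{7,\varepsilon} \div \vecee \varphi| \le C \varepsilon^{5/2}\,\normgradphi + \text{exp.~small}$, for every $\vecee \varphi \in \Vper$.

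Next I would account for the passage from $\{\vecee U^{7,\varepsilon}, P^{7,\varepsilon}\}$ to $\{\vecee U^{9,\varepsilon}, P^{9,\varepsilon}\}$. Each of the two intermediate correction steps (trace correction in section~\ref{sec:trace-and-lower-boundary} via the boundary layer problems~\eqref{eq:BLP-beta-w-W}, then the outer boundary correction on $\Gamma_H$) inserts into the error functions a boundary layer field of the type $\{\vecee \beta_{m}^{j,i,\bl,\varepsilon} - \mathcal{H}(x_2)\vecee M_m^{j,i,\bl},\, \omega_{m}^{j,i,\bl,\varepsilon} - \mathcal{H}(x_2)M_{\omega,m}^{j,i,\bl}\}$ multiplied by $\varepsilon^3$ resp.\ $\varepsilon^2$ and by uniformly bounded derivatives of $v_1^\FF$ or $p^\PM$ on $\Sigma$. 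Testing each such block against $\vecee \varphi$ and integrating by parts generates volume terms controlled by the standard estimates~\eqref{eq:estimates-w-W} together with the regularity assumptions of Remark~\ref{rem}; these are exactly of order $\varepsilon^{5/2}\,\normgradphi$ (plus exponentially small boundary contributions on $\Gamma_H$). This is the calculation sketched in~\cite[section~3.2.3--3.2.4]{Eggenweiler_Rybak_MMS20} and already summarized at the end of sections~\ref{sec:trace-and-lower-boundary} and~\ref{sec:lower-boundary}; its effect on the weak form is thus compatible with the right-hand side of~\eqref{eq:cor-1}.

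It then remains to check that $\vecee U^{9,\varepsilon} \in \Vper$. By construction, the trace correction~\eqref{eq:error-8-U} kills the two $\vecee y$-dependent terms in the jump computation~\eqref{eq:trace-U-7}, and the lower boundary correction in section~\ref{sec:lower-boundary} cancels both $U_2^{8,\varepsilon}|_{\Gamma_H}$ and $\partial U_1^{8,\varepsilon}/\partial x_2|_{\Gamma_H}$ modulo exponentially small remainders. Imposing the velocity interface conditions~\eqref{eq:IC-HO-normal} and~\eqref{eq:IC-HO-tangential}, which is the content of~\eqref{eq:set-velocity-IC}, then forces $\llbracket \vecee U^{9,\varepsilon}\rrbracket_\Sigma = \vecee 0$, and combined with the $L$-periodicity inherited from the cell and boundary layer fields and the zero no-slip trace on the pore-scale obstacles, this shows $\vecee U^{9,\varepsilon} \in \Vper$.

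Collecting the three contributions --- Corollary~\ref{cor-1} (after cancellation of the $\Sigma$ terms), the trace correction, and the $\Gamma_H$ correction --- and bounding each by $C\varepsilon^{5/2}\,\normgradphi$ up to exponentially small terms yields~\eqref{eq:cor-2}. The main obstacle I expect is the bookkeeping in the second step: one has to verify that every single term produced when the correctors from sections~\ref{sec:trace-and-lower-boundary}--\ref{sec:lower-boundary} are tested against an arbitrary $\vecee \varphi \in \Vper$ --- including the gradient and divergence terms coming from the Leibniz rule on products of oscillating fields and smooth macroscopic factors --- is genuinely of order $\varepsilon^{5/2}$ after invoking the Poincaré-type estimate~\eqref{eq:Poincare-estimate}, the sharp boundary layer bounds~\eqref{eq:estimates-w-W}, and the regularity in Remark~\ref{rem}; the potentially borderline contributions are those involving $\nabla \vecee \beta_m^{j,i,\bl,\varepsilon}$, where the factor $\varepsilon^{-1/2}$ must be absorbed by the explicit $\varepsilon^3$ prefactor.
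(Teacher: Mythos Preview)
Your proposal is correct and follows essentially the same route as the paper's own proof: use the pressure interface condition~\eqref{eq:set-pressure-IC} to annihilate the $\Sigma$-integrals in Corollary~\ref{cor-1}, observe that the trace and lower-boundary correctors added in passing from $\{\vecee U^{7,\varepsilon},P^{7,\varepsilon}\}$ to $\{\vecee U^{9,\varepsilon},P^{9,\varepsilon}\}$ contribute only at order $\varepsilon^{5/2}$ to the weak form, and verify membership in $\Vper$ by construction together with~\eqref{eq:set-velocity-IC}. Your write-up is in fact more explicit than the paper's (which is quite terse), and your closing remark about the borderline $\nabla\vecee\beta_m^{j,i,\bl,\varepsilon}$ terms correctly anticipates the one place where the $\varepsilon$-bookkeeping needs care.
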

\begin{proof}
    We constructed $\vecee U^{9,\varepsilon}$ in such a way that $\vecee U^{9,\varepsilon} \in H^1(\Omega^\varepsilon)^2$, $\vecee U^{9,\varepsilon}$ is $L$-periodic in $x_1$ and $\vecee U^{9,\varepsilon} = \vecee 0$ on $\partial \Omega^\varepsilon\setminus \partial \Omega$. Moreover, we have $\vecee U^{9,\varepsilon} = \vecee 0 + \emph{exponentially small terms}$ on the upper boundary $\Gamma_h$ due to the exponential decay of all boundary layer velocities.
    The condition $U_2^{9,\varepsilon} = 0 + \emph{exponentially small terms}$ on $\Gamma_H$ is guaranteed by the corrections described in section~\ref{sec:lower-boundary}.
    Hence, $\vecee U^{9,\varepsilon} \in \Vper$.
    Using the results from Corollary~\ref{cor-1} and the fact that all additional correctors to $\vecee U^{7,\varepsilon}, P^{7,\varepsilon}$ are of higher-order, i.e., the terms on the right-hand side in the corresponding weak form can be estimated at least by $C \varepsilon^{5/2}\normgradphi$, estimate~\eqref{eq:cor-2} is proven.
\end{proof}

It remains to estimate the pressure error $P^{9,\varepsilon}$ through the velocity error and then use the velocity error function as a test
function in~\eqref{eq:cor-2}. However, 
the difficulties are coming from the compressibility effects of $\vecee U^{9,\varepsilon}$ in the term $\int_{\Omega^\varepsilon} P^{9,\varepsilon} \nabla \vdot \vecee U^{9,\varepsilon}$ in~\eqref{eq:cor-2}. At this stage, we have
\begin{align}
    \div \vecee U^{9,\varepsilon}
    =& -\varepsilon^2 \sum_{j=1}^2 \left( {\beta}_1^{j,\bl, \varepsilon} - \mathcal{H}(x_2) M_1^{j,\bl} \right) \frac{\partial}{\partial x_1} \frac{\partial p^\PM }{\partial x_j}\bigg|_\Sigma 
    - \varepsilon^2 \zeta_1^{\bl,\varepsilon}  \frac{\partial}{\partial x_1} \frac{\partial v_1^\FF}{\partial x_2}\bigg|_\Sigma
    + \varepsilon^2 \left( c_1^{\bl,\varepsilon} - \mathcal{H}(x_2) E_1^{\bl} \right) \frac{\partial^2}{\partial x_1^2}  \frac{\partial v_1^\FF}{\partial x_2}\bigg|_\Sigma 
    \notag
    \\
    &
    +\varepsilon^2 \left( \xi_1^{\bl,\varepsilon} - \mathcal{H}(x_2) L_1^{\bl} \right) \frac{\partial^2}{\partial x_1^2}  \frac{\partial v_1^\FF}{\partial x_2}\bigg|_\Sigma 
    - \mathcal{H}(-x_2) \varepsilon^3\sum_{i,j=1}^2  W^{j,i,\varepsilon}_1 \pd{}{x_1} \dpxij 
    \notag 
    \\
    &
    - \mathcal{H}(-x_2) \varepsilon^2 \sum_{j=1}^2 \underbrace{\vecee \gamma^{j,i,\varepsilon}}_{=\varepsilon\vecee \gamma^{j,i}(\vecee y)} \vdot \nabla \frac{\partial^2 p^\PM}{\partial x_i x_j} 
    - \mathcal{H}(-x_2) \varepsilon^3\sum_{i,j=1}^2 \vecee w^{j,i,\varepsilon}  \vdot \nabla  \dpxijdom
    + \textit{O}( \varepsilon^{7/2})\, 
    \label{eq:div-U9}
\end{align}
leading to $\|\div \vecee U^{9,\varepsilon}\|_{L^2(\Omeps)} \leq C \varepsilon^{5/2}$. To obtain a sufficient estimate for the pressure error function, we need to derive an estimate of order $\textit{O}(\varepsilon^{7/2})$ for $\div \vecee U^{9,\varepsilon} \text{ in } L^2(\Omeps)$.
To correct the divergence of the velocity error, we need to eliminate all the terms appearing on the right-hand side of~\eqref{eq:div-U9} which is done in the next section.

\subsection{Correction of compressibility effects}
\label{sec:compressibility}
We correct the compressibility effects of the velocity error using similar constructions as in~\cite[section 3.2.5]{Eggenweiler_Rybak_MMS20} and ~\cite[section 4.4]{Carraro_etal_15}. 
For this purpose, we need to construct ten auxiliary problems in total: seven boundary layer problems and three cell problems.
For the elimination of the first four terms on the right-hand side of~\eqref{eq:div-U9}, we construct the following boundary layer problems 
\begin{equation}\label{eq:BLP-theta}
\begin{aligned}
    \divy \vecee \theta_a^{\bl} &=  a  \quad \text{ in } Z^+ \cup Z^-,
    \\
    \llbracket \vecee \theta_a^{\bl} \rrbracket_S &= - \left( \int_{Z^{\bl}}  a  \ \text{d} \vecee y \right) \vecee e_2 \quad \text{ on } S, 
    \\
    \vecee \theta_a^{\bl} &= \vecee 0 \quad \text{ on } \cup_{k=1}^{\infty}(\partial Y_\text{s} - (0,k)), \quad \vecee \theta_a^{\bl} \text{ is 1-periodic in $y_1$}\, ,
\end{aligned}
\end{equation}
where the subscript $a$ is given by
\begin{align}
    a \in A:= \bigg\{&
    \left( {\beta}_1^{j,\bl} - \mathcal{H}(x_2) M_1^{j,\bl} \right), \ \,
    \zeta_1^{\bl}, 
     \ \, -\left( c_1^{\bl} - \mathcal{H}(x_2) E_1^{\bl} \right), \ \,
    -\left( \xi_1^{\bl} - \mathcal{H}(x_2) L_1^{\bl} \right) 
    \bigg\} \, , \quad  j=1,2 
    \, . \label{eq:a}
\end{align}
After~\cite[Proposition 3.20 and 3.21]{Jaeger_Mikelic_96}, problem~\eqref{eq:BLP-theta} has at least one solution $\vecee \theta_a^{\bl} \in H^1(Z^+ \cup Z^-)^2 \cap C_\text{loc}^\infty (Z^=\cup Z^-)^2$ such that $e^{\gamma|y_2|}\vecee \theta_a^\bl \in H^1(Z^+ \cup Z^-)^2$ for some $\gamma>0$ and all $a$ given in~\eqref{eq:a}.
We set $\vecee \theta^{\bl,\varepsilon}_a(\vecee x) = \vecee \theta^\bl_a (\frac{\vecee x}{\varepsilon})$ in $\Omeps$, and from the work of~\cite{Jaeger_Mikelic_96}, we get the following estimate
\begin{align}\label{eq:estimate-theta}
\|\vecee \theta^{\bl,\varepsilon}_a\|_{L^2(\Omeps)^2} \leq C \varepsilon^{1/2} \, .
\end{align}

We introduce the boundary layer correctors obtained from solving~\eqref{eq:BLP-theta} that are added to the velocity error function $\vecee U^{9,\varepsilon}$ as 
\begin{align}\label{eq:corrector-theta}
    \varepsilon^3 \vecee \theta^{\bl,\varepsilon}_a G_a(\vecee x) \, , 
    \quad G_a \in \bigg\{ \frac{\partial}{\partial x_1} \frac{\partial p^\PM}{\partial x_j}\bigg|_\Sigma, \ \,
    \frac{\partial}{\partial x_1}\frac{\partial v_1^\FF}{\partial x_2}\bigg|_\Sigma, \ \,
    \frac{\partial^2}{\partial x_1^2}\frac{\partial v_1^\FF}{\partial x_2}\bigg|_\Sigma 
    \bigg\} \, ,
\end{align}
where $G_a$ is the same function that is multiplied with $a$ in equation~\eqref{eq:div-U9}. 
For example, for $a=\left( {\beta_1}^{j,\bl} - \mathcal{H}(x_2) M_1^{j,\bl} \right)$ we have $G_a= \frac{\partial}{\partial x_1}\frac{\partial p^\PM}{\partial x_j}\big|_\Sigma$.
Under the assumptions from Remark~\ref{rem} and using estimate~\eqref{eq:estimate-theta}, we obtain
\begin{align}
    \bigg| \int_{\Omeps} \nabla  \left(\varepsilon^3 \vecee \theta^{\bl,\varepsilon}_a G_a(\vecee x)\right) \colon \nabla \vecee \varphi \bigg|
    \leq C \varepsilon^{5/2} \normgradphi \, .
\end{align}
The divergence corresponding to correctors~\eqref{eq:corrector-theta} is 
\begin{align}
    \nabla \vdot \left( \varepsilon^3 \vecee \theta^{\bl,\varepsilon}_a G_a(\vecee x) \right)
    &=
    \varepsilon^2 a(\vecee y) G_a(\vecee x) 
    +
     \varepsilon^3 \vecee \theta^{\bl,\varepsilon}_a \nabla \vdot G_a(\vecee x) \qquad \text{in } \Omeps
     \, ,
     \label{eq:divergence-theta}
\end{align}
where the first term on the right-hand side of~\eqref{eq:divergence-theta} corresponds to one of the first four terms on the right-hand side of~\eqref{eq:div-U9} dependent on $a \in A$. Thus, when we add all correctors~\eqref{eq:corrector-theta} to the velocity error function, all the first four terms on the right-hand side of~\eqref{eq:div-U9} vanish.

In order to correct the last three terms in~\eqref{eq:div-U9}, we define the following cell problems
\begin{equation}\label{eq:cell-alpha}
\begin{aligned}
    \divy \vecee  \alpha_{b} &= b \quad \text{ in } Y_\text{f} \, ,
    \\
    \vecee \alpha_{b} &= \vecee 0 \quad \text{ on } \partial Y_\text{s} \, ,
    \quad \vecee \alpha_{b} \text{ is $1$-periodic in $\vecee y$}\, ,
\end{aligned}
\end{equation}
for $b \in B:=\big\{ W_1^{j,i}, \gamma_k^{j,i}, w_k^{j,i} \big\}$ and $i,j,k=1,2$.
Here, $\vecee W^{j,i}=( W_1^{j,i},  W_2^{j,i})^\top$ is the solution to~\eqref{eq:BLP-beta-w-W} with the velocity and stress jump conditions~\eqref{eq:BLP-beta-W-jump}, $\vecee \gamma^{j,i} = (\gamma_1^{j,i}, \gamma_2^{j,i})^\top$ is the solution to~\eqref{eq:cell-problems-gamma}, and $\vecee w^{j,i} = (w_1^{j,i}, w_2^{j,i})^\top$ is the solution to~\eqref{eq:BLP-beta-w-W} with~\eqref{eq:BLP-beta-w-jump}.
%
Setting $\vecee  \alpha_b^\varepsilon (\vecee x) = \vecee  \alpha_b(\frac{\vecee x}{\varepsilon})$ for $\vecee x \in \Omepspm$, we construct the correctors which contribute to the velocity error function as 
\begin{align}
   \mathcal{H}(-x_2)\varepsilon^4 \vecee  \alpha_b^\varepsilon F_b(\vecee x)
\end{align}
with $F_b \in \big\{ \frac{\partial}{\partial x_1}\frac{\partial^2 p^\PM}{\partial x_i x_j}\big|_\Sigma, \frac{\partial^3 p^\PM}{\partial x_ix_jx_k} \big\}$, $i,j,k=1,2$, chosen as the same factor which is multiplied with $b$ in~\eqref{eq:div-U9}. 
Using Remark~\ref{rem} and the fact that $\|\vecee \alpha_b^{\varepsilon}\|_{L^2(\Omepspm)^2} \leq C$, we obtain the following estimate
\begin{align}\label{eq:estimate-alpha}
    \bigg| \intpmvel{\varepsilon^4}{ \vecee  \alpha_b^{\varepsilon}}{F_b(\vecee x)} \bigg| \leq C\varepsilon^3 \normgradphipm \, .
\end{align}

Moreover, we have
\begin{align}
    \div \left( \varepsilon^4\vecee  \alpha_b^{\varepsilon} F_b(\vecee x) \right) 
    &= \varepsilon^3 \underbrace{\varepsilon \div   \vecee  \alpha_b^{\varepsilon}F_b(\vecee x) }_{= b F_b(\vecee x) }
    +
    \varepsilon^4 \vecee \alpha_b^{\varepsilon} \vdot \nabla F_b(\vecee x)
    \, \qquad \text{in } \Omepspm \, .
    \label{eq:divergence-alpha}
\end{align}
Thus, the first term on the right-hand side of~\eqref{eq:divergence-alpha} cancels with the corresponding term on the right-hand side of~\eqref{eq:div-U9} when adding the corrector $\mathcal{H}(-x_2)\varepsilon^4\vecee  \alpha_b^{\varepsilon} F_b(\vecee x) $ to the velocity error $\vecee U^{9,\varepsilon}$.

Since the problems given by~\eqref{eq:cell-alpha} are defined only in the porous-medium domain $\Omepspm$, auxiliary boundary layer velocities and pressures correcting their values on the interface~$\Sigma$ are needed.
These problems are of the same form as~\eqref{eq:BLP-gamma} but with different jump conditions across the interface. The construction of these problems is analogous to those in~\citep{Eggenweiler_Rybak_MMS20,Carraro_etal_15} and we refer the reader therein.
However, we note that the corresponding boundary layer correctors are of higher order and do not contribute to the interface conditions nor to the result from Corollary~\ref{cor-2}.
In the following, we denote these additional velocity trace correctors by $\{ \vecee v^{\bl,\text{trace}}, p^{\bl,\text{trace}}\}$
in the velocity and pressure error functions~\eqref{eq:error-10-U} and~\eqref{eq:error-10-P}.

We define new error functions
\begin{align}
    \vecee U^{10,\varepsilon} &= \vecee U^{9,\varepsilon} 
    + \varepsilon^3 \sum_{a \in A} \vecee \theta^{\bl,\varepsilon}_a G_a(\vecee x)
    + \mathcal{H}(-x_2) \varepsilon^4 \sum_{b \in B} \vecee  \alpha_b^\varepsilon F(\vecee x)
    + \vecee v^{\bl,\text{trace}}
    \, ,
    \label{eq:error-10-U}
    \\
    P^{10,\varepsilon} &= P^{9,\varepsilon} 
    + p^{\bl,\text{trace}}
    \, ,
    \label{eq:error-10-P}
\end{align}
and we prove the following result.
\begin{corollary}
    It is $\vecee U^{10,\varepsilon} \in \Vper$, and for all  $\vecee \varphi \in V_\text{per}(\Omega^\varepsilon)$ it holds
\begin{align}
    \bigg| \int_{\Omega^\varepsilon} \nabla \vecee U^{10,\varepsilon} \colon \nabla \vecee \varphi 
    - \int_{\Omega^\varepsilon} P^{10,\varepsilon} \nabla \vdot \vecee \varphi 
    \bigg|
    &\leq C \varepsilon^{5/2} \normgradphi  + \text{exp. small terms}\, ,
    \label{eq:cor-3-1}
    \\
    \| \div \vecee U^{10,\varepsilon} \|_{L^2(\Omeps)} &\leq C \varepsilon^{7/2} \, .
    \label{eq:cor-3-2}
\end{align}
Moreover, we get
\begin{align}
    \int_{\Omega^\varepsilon} \! |\nabla \vecee U^{10,\varepsilon}|^2
    &\leq C \varepsilon^{5/2} \! \left( \varepsilon \| P^{10,\varepsilon} \|_{L^2(\Omeps)}  +  \norm{\nabla \vecee  U^{10,\varepsilon} }_{L^2(\Omega^\varepsilon)^{{2\times2}}} \right)
    \! + \text{exp. small terms} \, .
    \label{eq:cor-3-3}
\end{align}
\end{corollary}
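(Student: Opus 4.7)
The plan is to combine the estimate from Corollary~\ref{cor-2} for $\{\vecee U^{9,\varepsilon}, P^{9,\varepsilon}\}$ with the controlled contributions of the compressibility correctors introduced in section~\ref{sec:compressibility}, and then extract the energy inequality by using $\vecee U^{10,\varepsilon}$ itself as a test function. The main obstacle will be the divergence cancellation in \eqref{eq:cor-3-2}: every low-order term on the right-hand side of~\eqref{eq:div-U9} has to be matched exactly by one of the new correctors, sign-for-sign and prefactor-for-prefactor.

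First I would verify that $\vecee U^{10,\varepsilon} \in \Vper$. Since $\vecee U^{9,\varepsilon} \in \Vper$ by Corollary~\ref{cor-2}, it is enough to check that each added term is $L$-periodic in $x_1$, vanishes on the solid obstacles, and respects the boundary conditions on $\Gamma_h$ and $\Gamma_H$ up to exponentially small residuals. The functions $\varepsilon^3 \vecee \theta_a^{\bl,\varepsilon} G_a$ and $\mathcal{H}(-x_2)\varepsilon^4 \vecee \alpha_b^\varepsilon F_b$ satisfy these requirements by construction, and the trace correctors $\vecee v^{\bl,\text{trace}}$ are precisely there to restore trace continuity across $\Sigma$ while being of higher order.

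Next, for \eqref{eq:cor-3-1}, I would expand the weak form of the contributions from the three new families of correctors by integration by parts. The $\vecee \theta_a^{\bl,\varepsilon}$-contributions are bounded by $C \varepsilon^{5/2} \normgradphi$ using \eqref{eq:estimate-theta} and Remark~\ref{rem}; the $\vecee \alpha_b^\varepsilon$-contributions by $C \varepsilon^3 \normgradphipm$ from \eqref{eq:estimate-alpha}; and the $\vecee v^{\bl,\text{trace}}$-contributions are treated as in section~\ref{sec:trace-and-lower-boundary} and yield residuals of order $\varepsilon^{5/2}$ up to exponentially small terms. Summing with Corollary~\ref{cor-2} yields \eqref{eq:cor-3-1}. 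The hardest step is \eqref{eq:cor-3-2}: I would rewrite $\div \vecee U^{10,\varepsilon}$ by starting from \eqref{eq:div-U9} and adding the divergence contributions of each new corrector via~\eqref{eq:divergence-theta} and~\eqref{eq:divergence-alpha}. By construction, the leading terms $\varepsilon^2 a(\vecee y) G_a$ and $\varepsilon^3 b F_b$ precisely cancel, respectively, the four $O(\varepsilon^{5/2})$ boundary-layer terms and the three cell-problem terms on the right-hand side of~\eqref{eq:div-U9}. What remains is the tail $\varepsilon^3 \vecee \theta_a^{\bl,\varepsilon} \vdot \nabla G_a + \varepsilon^4 \vecee \alpha_b^\varepsilon \vdot \nabla F_b$, the $O(\varepsilon^{7/2})$ remainder already present in~\eqref{eq:div-U9}, and the divergence of the higher-order trace correctors. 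Each surviving piece has $L^2(\Omega^\varepsilon)$-norm of order $\varepsilon^{7/2}$ using Remark~\ref{rem}, \eqref{eq:estimate-theta} and the $L^2$-boundedness of $\vecee \alpha_b^\varepsilon$, giving \eqref{eq:cor-3-2}.

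Finally, \eqref{eq:cor-3-3} will follow from testing \eqref{eq:cor-3-1} with $\vecee \varphi = \vecee U^{10,\varepsilon}$, which is allowed by the membership just verified. Moving the pressure term to the right-hand side and applying Cauchy--Schwarz together with \eqref{eq:cor-3-2} bounds $\bigl|\int_{\Omega^\varepsilon} P^{10,\varepsilon} \div \vecee U^{10,\varepsilon}\bigr|$ by $C \varepsilon^{7/2} \|P^{10,\varepsilon}\|_{L^2(\Omega^\varepsilon)}$, which matches the $\varepsilon \|P^{10,\varepsilon}\|_{L^2(\Omega^\varepsilon)}$ factor inside the prefactor $C\varepsilon^{5/2}$ in the stated bound; the $\norm{\nabla \vecee U^{10,\varepsilon}}_{L^2(\Omega^\varepsilon)^{2\times 2}}$ factor comes directly from the right-hand side of~\eqref{eq:cor-3-1}.
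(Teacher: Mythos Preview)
Your proposal is correct and follows essentially the same route as the paper: membership in $\Vper$ by construction, estimate~\eqref{eq:cor-3-1} from Corollary~\ref{cor-2} combined with~\eqref{eq:estimate-theta} and~\eqref{eq:estimate-alpha}, the divergence bound~\eqref{eq:cor-3-2} from the exact cancellations in~\eqref{eq:div-U9},~\eqref{eq:divergence-theta},~\eqref{eq:divergence-alpha}, and then~\eqref{eq:cor-3-3} by testing with $\vecee U^{10,\varepsilon}$ and applying Cauchy--Schwarz together with~\eqref{eq:cor-3-2}. Your write-up is simply more explicit about the individual cancellations and the residual terms than the paper's terse proof.
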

\begin{proof}
    By construction, we have $\vecee U^{10,\varepsilon} \in \Vper$ since all correctors added to $\vecee U^{9,\varepsilon}$ in~\eqref{eq:error-10-U} are either of boundary layer type (i.e. $\vecee \theta^{\bl,\varepsilon}_a$), vanishing away from the interface and having a constant value at the interface, or are derived from cell problems (i.e. $\vecee a^\varepsilon_b$) for which the values at the interface are corrected.
    Then, estimate~\eqref{eq:cor-3-1} is a direct consequence of~\eqref{eq:cor-2},~\eqref{eq:estimate-theta} and~\eqref{eq:estimate-alpha}.
    Combining equations~\eqref{eq:div-U9},~\eqref{eq:divergence-theta} and~\eqref{eq:divergence-alpha} yields estimate~\eqref{eq:cor-3-2}.
    Furthermore, by applying the Cauchy-Schwarz inequality to~\eqref{eq:cor-3-1}, inserting $\vecee U^{10,\varepsilon}$ as a test function in~\eqref{eq:cor-3-1} and using the result given in~\eqref{eq:cor-3-2}, we obtain~\eqref{eq:cor-3-3}.
\end{proof}

As the next step, we estimate the pressure error function~$P^{10,\varepsilon}$ using the velocity error function $\vecee U^{10,\varepsilon}$.

\subsection{Pressure estimates}
\label{sec:pressure-estimate}
In this section, we derive an estimate for the pressure error~$P^{10,\varepsilon}$. For this purpose, we follow the ideas presented in~\cite[section 4.5]{Carraro_etal_15} and use some of the proven results therein.
This is possible since the pore-scale problem~\eqref{eq:pore-scale} is of the same form as the one in~\citep{Carraro_etal_15}, only with different boundary conditions on the upper and lower boundaries~$\Gamma_h$ and $\Gamma_H$. 
We extend the pressure error function $P^{10,\varepsilon}$ defined in $\Omeps$ to $\widetilde P^{10,\varepsilon}$ defined in $\Omega$ in a standard way~\citep[eq. (1.23)]{Hornung_97}: 
\begin{align}
    \widetilde P^{10,\varepsilon} = \begin{cases}
      P^{10,\varepsilon} & \text{in } \Omeps \, ,
      \\
      \frac{1}{\varepsilon^2|Y_\text{f}|} \displaystyle
      \int_{\varepsilon (Y_\text{f} -(k_1,k_2) )} P^{10,\varepsilon} \, \mathrm{d}\vecee y &  \text{in } \Omega \setminus\Omeps \, ,
    \end{cases}    
    \label{eq:pressure-extension}
\end{align}  
where $k_1, k_2 \in \mathbb{N}$.
Then, Proposition~14 from~\citep{Carraro_etal_15} yields
\begin{align}\label{eq:pressure-estimate-in-Omega}
    \| \widetilde P^{10,\varepsilon} \|_{L^2(\Omega)} \leq \frac{C}{\varepsilon} \left( \norm{\nabla \vecee  U^{10,\varepsilon} }_{L^2(\Omega^\varepsilon)^{{2\times2}}} + C \varepsilon^{5/2} \right) \, .
\end{align}
In the next section, we insert~\eqref{eq:pressure-estimate-in-Omega} in~\eqref{eq:cor-3-3} in order to prove Theorem~\ref{theo}, i.e., to obtain rigorous estimates for the velocity and pressure error functions.

\subsection{Global energy estimates (proof of Theorem~\ref{theo})}
\label{sec:global-energy-estimates}
In this section, we prove the results presented in Theorem~\ref{theo}. We derive error estimates for the velocity error function, its gradient, and the pressure error.
Using estimate~\eqref{eq:pressure-estimate-in-Omega} in~\eqref{eq:cor-3-3}, we get
\begin{align}\label{eq:estimate-grad-velocity}
    \norm{\nabla \vecee  U^{10,\varepsilon} }^2_{L^2(\Omega^\varepsilon)^{{2\times2}}} 
    &\leq 
    C \varepsilon^{5/2}   \norm{\nabla \vecee  U^{10,\varepsilon} }_{L^2(\Omega^\varepsilon)^{{2\times2}}} + C \varepsilon^{5/2}   \, ,
\end{align}
which leads to
\begin{align}
    \norm{\nabla \vecee  U^{10,\varepsilon} }_{L^2(\Omega^\varepsilon)^{{2\times2}}} \leq C \varepsilon^{5/2} \, .
\end{align}
Thus, the estimate for the pressure is 
\begin{align}
    \| \widetilde P^{10,\varepsilon} \|_{L^2(\Omega)} \leq C \varepsilon^{3/2} \, .
\end{align}
Using the classical Poincaré inequality, estimate~\eqref{eq:estimate-grad-velocity} and the inequalities given in~\eqref{eq:Poincare-estimate}, we obtain
\begin{align}\label{eq:estimate-velocity}
    \norm{\vecee  U^{10,\varepsilon} }_{L^2(\Omega_\FF)^2} \leq C \varepsilon^{5/2} \, , \quad \
    \norm{\vecee  U^{10,\varepsilon} }_{L^2(\Omepspm)^2} \leq C \varepsilon^{7/2} \, , \quad \ 
    \norm{\vecee  U^{10,\varepsilon} }_{L^2(\Sigma)^2} \leq C \varepsilon^{3}  \, .
\end{align}
Note that the $L^2$-estimate for the velocity $\vecee  U^{10,\varepsilon}$ in the free-flow region $\Omega_\FF$ could also be improved by the order of $\varepsilon^{1/2}$ using the theory of very weak solutions, similar as it is done in~\citep{Carraro_etal_15}. 
However, since estimates~\eqref{eq:estimate-velocity} are already accurate enough, we do not provide the steps for the improvement of estimating $\norm{\vecee  U^{10,\varepsilon} }_{L^2(\Omega_\FF)^2}$ in this work.

\section{Numerical results}\label{sec:numerics}

In this section we provide a test case to validate the proposed coupling conditions. We  
compare numerical simulation results for the macroscale problem using the new interface conditions and the averaged pore-scale resolved results. We impose boundary conditions such that we obtain the flow, which approaches the interface with at arbitrary angles.
For numerical results, we reformulate the new conditions and neglect terms of order $\varepsilon^3$ for the velocity and $\varepsilon^2$ for the pressure (see~\ref{appendix:reformulated-IC}).
We also demonstrate advantages of the new conditions~\eqref{eq:IC-HO-normal-reform}--\eqref{eq:IC-HO-tangential-reform} in comparison to the generalized interface conditions~\eqref{eq:ER-mass}--\eqref{eq:ER-tangential} and the classical conditions~\eqref{eq:IC-classical-mass}--\eqref{eq:IC-classical-BJJ}. In sections that follow we first briefly provide the discretization scheme and the numerical solution algorithm. Then we describe the test case, and finally illustrate the velocity and pressure profiles. 

\subsection{Discretization scheme and numerical solution method}
The pore-scale and macroscale numerical simulations are conducted using the open source PDE solver~\textsc{FreeFEM++} \cite{Hecht_2012}. Here, we use the Taylor--Hood elements (P2/P1).
%
%
Because of the solid inclusions in the porous-medium domain the pore-scale simulation results contain physical oscillations.
Therefore, it is necessary to perform averaging of the pore-scale simulation results in order to facilitate their comparison with macroscale simulations. 
In this work, we apply ensemble averaging~\citep{Eggenweiler_Rybak_MMS20,Sudhakar_etal_2021} to mitigate the microscopic effects present in the pore-scale results.
The idea of ensemble averaging is as follows: 
First we generate 50 samples of pore-scale simulation results by shifting solid inclusions horizontally at $\varepsilon/50$. Secondly, we compute the averaged pore-scale velocity and pressure as the arithmetic mean of the velocities and pressures obtained from these $50$ samples.
%

Effective model parameters (permeability of the porous medium, boundary layer coefficients appearing in the coupling conditions) are computed from the solutions to cell and boundary layer problems. 
Since we cannot handle infinitely long domains numerically, we consider a cut-off stripe $Z^m := Z^\bl \cap \{(0,1) \times (-m,m)\}$, $m \in \mathbb{N}$ (figure~\ref{fig:BL-stripe-flow-field}, left) as it is done in~\cite{Carraro_etal_15,Jaeger_Mikelic_00}.
We solve the boundary layer problems~\eqref{eq:BLP-t},~\eqref{eq:BLP-beta},~\eqref{eq:BLP-zeta}~\eqref{eq:BLP-xi} and~\eqref{eq:BLP-c} in the cut-off stripe~$Z^4$ and use the resulting solutions to compute the boundary layer constants given in~\eqref{eq:BLP-constants-N},~\eqref{eq:BLP-constants-M},~\eqref{eq:BLP-constant-W},~\eqref{eq:BLP-constants-L} and~\eqref{eq:BLP-constants-E}. 
All these computations are also performed with the solver \textsc{FreeFEM++}.

\subsection{Test case setting}
In this work we examine the lid-driven cavity problem above the porous layer that has been extensively studied and serves as a benchmark for assessing fluid--porous interface conditions~\cite{Lacis_Bagheri_17,Rybak_etal_21,Sudhakar_etal_2021}. The more detailed validation study in beyond the scope of this manuscript and is subject to additional paper. 

We consider the flow domains $\Omega_\FF=(0,1) \times (0,0.5)$ and $\Omega_\PM=(0,1) \times (-0.5,0)$ separated by the fluid--porous  interface $\Sigma=(0,1) \times \{0\}$. At the pore scale, the consider $20 \times 10$ in-line arranged circular solid obstacles with diameter $d^\varepsilon=d \varepsilon$, where $\varepsilon=1/20$ and $d=0.5$ is diameter of inclusion within the unit cell and the cut-off stripe~$Z^4$ (figure~\ref{fig:BL-stripe-flow-field}, left). 
The computed values of permeability $\ten K = k \ten I$ and the boundary layer coefficients in the higher-order interface conditions~\eqref{eq:IC-HO-normal-reform}--\eqref{eq:IC-HO-tangential-reform} are provided in table~\ref{tab:BL-constants} for the considered geometry.

\begin{table}[h]
    \centering
    \begin{tabular}{|c | c | c | c | c | c | c | c | c | c | c | c | c |}
     \hline 
        $k$ & $N_1^\bl$ & $N_s^\bl$ &  
        $M_1^{1,\bl}$ & $M_\omega^{1,\bl}$ & 
        $M_1^{2,\bl}$ & $M_\omega^{2,\bl}$ &  
        $E_1^{\bl}$ & $E_b^{\bl}$ &  
        $L_1^{\bl}$ & $L_\eta^{\bl}$ &  
        $W^\bl$ 
        \\
        \hline \hline
        $1.99 \mathrm{e-}2$ & $-3.04 \mathrm{e-}1$ & $0$ &  
        $-4.76\mathrm{e-}2$ & $0$ & 
        $0$ & $2.58\mathrm{e-}2$ &  
        $0$ & $3.04\mathrm{e-}1$ &  
        $0$ & $4.70\mathrm{e-}3$ & 
        $4.76\mathrm{e-}2$ 
         \\ \hline 
    \end{tabular}
    \caption{Effective coefficients appearing in conditions~\eqref{eq:IC-HO-normal}--\eqref{eq:IC-HO-tangential} and~\eqref{eq:IC-HO-normal-reform}--\eqref{eq:IC-HO-tangential-reform} in case of circular solid inclusions with diameter $d=0.5$.}
    \label{tab:BL-constants}
\end{table}

\begin{figure}
\centering
    \includegraphics[width=0.23\textwidth]{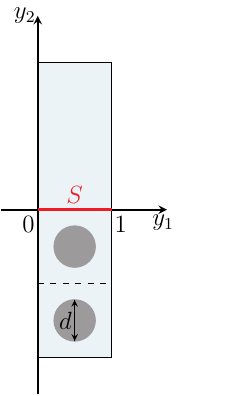}
    \qquad 
    \includegraphics[width=0.5\textwidth]{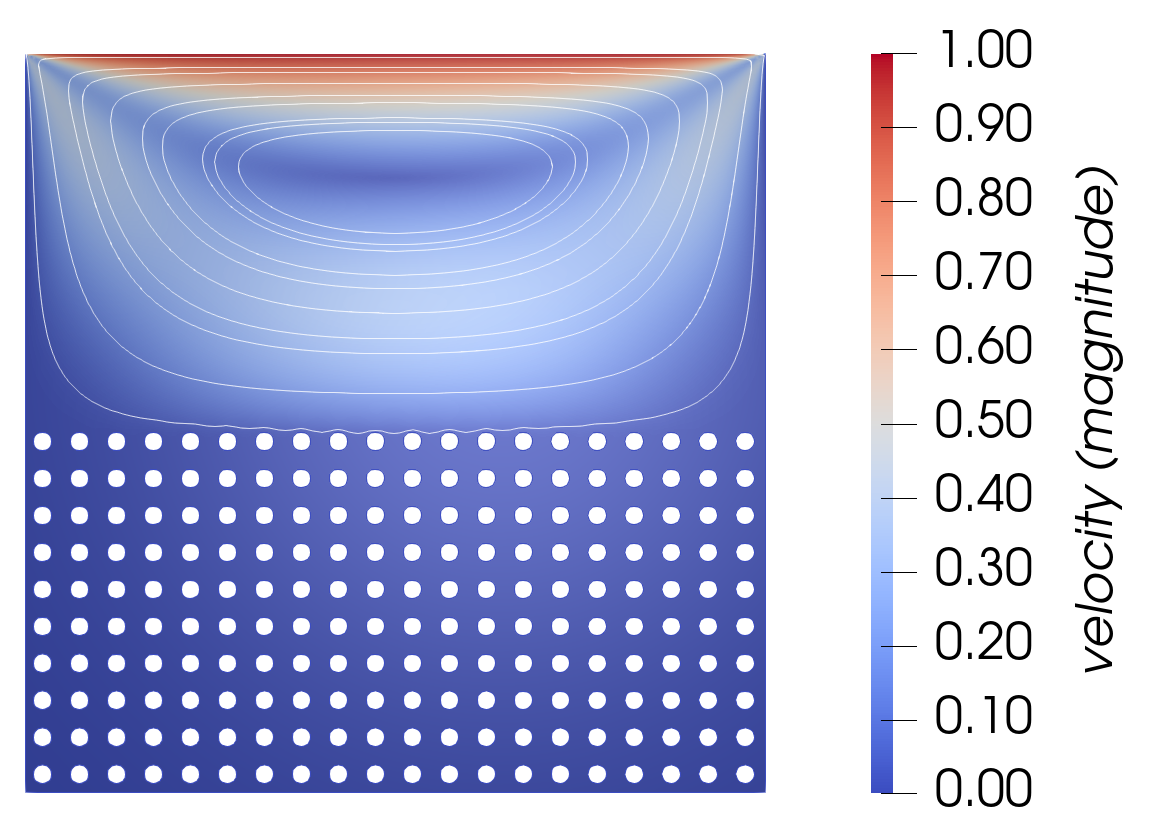}
    \caption{Cut-off stripe $Z^m$ for $m=2$ (left) and magnitude of pore-scale velocity (right).}
    \label{fig:BL-stripe-flow-field}
\end{figure}

%
For the microscopic flow model~\eqref{eq:pore-scale}, we impose the following external boundary conditions 
\begin{align}
    \vecee v^\varepsilon = (1, 0)^\top \quad \text{ on } \Gamma_\text{top} \, ,
    \quad \ \ 
    \vecee v^\varepsilon = \vecee 0 \quad \text{ on } \Gamma_\text{wall} \, ,
    \quad \ \
    \left( \nabla \vecee v^\varepsilon - p^\varepsilon\ten I \right)\vecee n = \vecee 0 \quad \text{ on } \Gamma_\text{bot} \, ,
    \label{eq:LDC-BC-pore-scale}
\end{align}
where $\Gamma_\text{top} = (0,1) \times \{0.5\}$, $\Gamma_\text{bot} = (0,1) \times \{-0.5\}$ and $\Gamma_\text{wall} = \partial \Omega \setminus (\Gamma_\text{top} \cup \Gamma_\text{bot})$.
Note that in this way we obtain the flow, which is non-parallel to the fluid--porous interface (figure~\ref{fig:BL-stripe-flow-field}, right) and both velocity components are nonzero away from $x_1=0.5$.

For the coupled macroscale Stokes--Darcy problem~\eqref{eq:macro-FF},~\eqref{eq:macro-PM} the boundary conditions read
\begin{align}
    \vecee v^\FF &= (1, 0)^\top \quad \text{ on } \Gamma_\text{top} \, ,
    \qquad \ \
    \vecee v^\FF = \vecee 0 \quad \text{ on } \partial \Omega_\FF \setminus (\Gamma_\text{top} \cup \Sigma) \, ,
    \label{eq:LDC-BC-FF}
    \\
    p^\PM &= 0   \quad \qquad \ \text{ on } \Gamma_\text{bot} \, , 
    \quad   \,
    \vecee v^\PM \vdot \vecee n = 0 \quad \text{ on } \partial \Omega_\PM \setminus (\Sigma \cup \Gamma_\text{bot}) 
     \, .
     \label{eq:LDC-BC-PM}
\end{align}
To complete the macroscale model formulation, we apply different sets of interface conditions. These include the classical conditions~\eqref{eq:IC-classical-mass}--\eqref{eq:IC-classical-BJJ}, the generalized conditions~\eqref{eq:ER-mass}--\eqref{eq:ER-tangential} and the newly derived higher-order coupling conditions in their reformulated version~\eqref{eq:IC-HO-normal-reform}--\eqref{eq:IC-HO-tangential-reform} as explained in~\ref{appendix:reformulated-IC}.


\subsection{Numerical simulation results}
In figure~\ref{fig:cross-section}, we provide velocity 
profiles along fixed cross-sections corresponding to the pore-scale model~\eqref{eq:pore-scale},~\eqref{eq:LDC-BC-pore-scale} and the Stokes--Darcy model~\eqref{eq:macro-FF},~\eqref{eq:macro-PM},~\eqref{eq:LDC-BC-FF} and~\eqref{eq:LDC-BC-PM} with three different sets of interface conditions (classical, generalized, higher-order).
Horizontal velocity profiles along the interface at $x_2=0$ are presented in figure~\ref{fig:cross-section} (left). We observe that the profiles corresponding to the macroscale model with both the generalized and the higher-order conditions fit very well with the averaged pore-scale velocity profile. However, this is not the case for the classical interface conditions. 
Profiles for the vertical velocity component along the interface ($x_2=0$) are shown in figure~\ref{fig:cross-section} (right). 
Here, the disparities between the macroscale model using either the classical or the generalized interface conditions when compared to the newly developed higher-order conditions are significant.
It becomes evident that the Stokes--Darcy model with the coupling conditions~\eqref{eq:IC-HO-normal-reform}--\eqref{eq:IC-HO-tangential-reform} is the most accurate one. 

\begin{figure}[ht]
    \centering
    \includegraphics[scale=0.75]{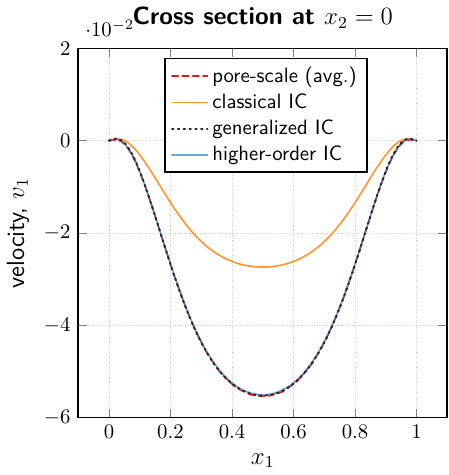} \quad 
    \includegraphics[scale=0.75]{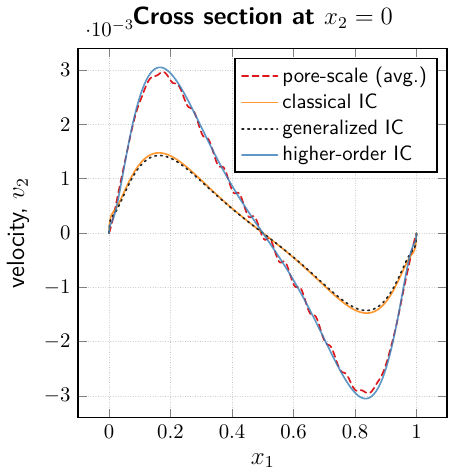}
    \caption{Horizontal (left) and vertical (right) velocity profiles at the cross-section $x_2=0$.}
    \label{fig:cross-section}
\end{figure}

For vertical cross-sections at $x_1=0.7$, where the flow is non-parallel to the interface (figure~\ref{fig:cross-section-x1}), we observe almost no differences between the macroscale velocity and pressure profiles in the case of the generalized and higher-order interface conditions.
These profiles closely align with the pore-scale results. 
The classical interface conditions are not very accurate 
for such flow problems, especially for the pressure profile we observe significant deviations from the pore-scale simulation results (figure~\ref{fig:cross-section-x1}, right).

\begin{figure}[t]
    \centering
    \includegraphics[scale=0.75]{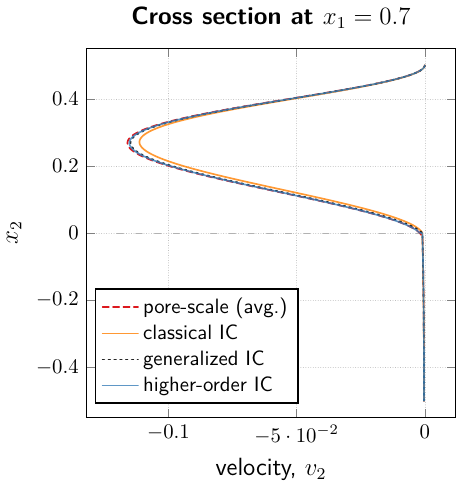} \quad
    \includegraphics[scale=0.75]{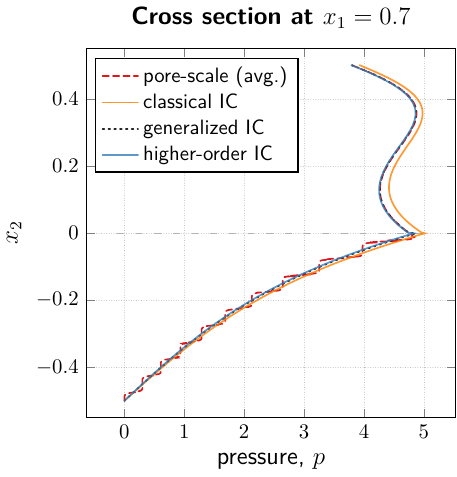}
    \caption{Vertical velocity (left) and pressure (right) profiles at the cross-section $x_1=0.7$.}
    \label{fig:cross-section-x1}
\end{figure}

To summarize, the Stokes--Darcy model with the higher-order interface conditions~\eqref{eq:IC-HO-normal-reform}--\eqref{eq:IC-HO-tangential-reform} is the most accurate macroscale model to describe fluid flows in coupled systems. 
The classical conditions~\eqref{eq:IC-classical-mass}--\eqref{eq:IC-classical-BJJ}  
%
are not well-suited for the accurate representation of the flow behavior.
The generalized conditions~\eqref{eq:ER-mass}--\eqref{eq:ER-tangential} perform significantly better than the classical ones but not as accurate as the higher-order conditions.
The importance of the additional terms appearing in the newly developed conditions~\eqref{eq:IC-HO-normal-reform}--\eqref{eq:IC-HO-tangential-reform} in comparison to the generalized interface conditions~\eqref{eq:ER-mass}--\eqref{eq:ER-tangential} becomes obvious when considering the normal velocity component along the interface.


\section{Conclusions}
\label{sec:conclusion}
In this paper, we have rigorously derived higher-order interface conditions to couple the Stokes and Darcy flow equations.
These coupling conditions are developed for arbitrary flows to the fluid--porous interface applying periodic homogenization and boundary layer theory. 
Rigorous error estimates that justify the obtained homogenization result are proven under an assumption on regularity and boundedness of the free-flow velocity and the porous-medium pressure.
All effective coefficients appearing in the developed coupling conditions are computed numerically based on the pore geometry.
We provide numerical simulation results that show the validity of the new interface conditions for arbitrary flows to the interface as well as their advantage in comparison to existing coupling concepts.

\appendix


\renewcommand{\thesection}{\Alph{section}}
\renewcommand{\thesection}{\appendixname\ \Alph{section}}

\section{Estimates\label{app1}}
For the estimation of integral terms appearing in weak formulations, we use the Poincaré inequality and the results from \cite[Lemma 4.10]{Espedal_98}: \\
For $\varphi \in H^1(\Omega^\varepsilon_\PM)$ with $\varphi= 0$ on $\partial \Omega^\varepsilon_\PM \setminus \partial \Omega_\PM$, we have
\begin{align}\label{eq:Poincare-estimate}
    \norm{ \varphi}_{L^2(\Omega_\PM^\varepsilon)} \leq & \ C \varepsilon \norm{\nabla \varphi}_{L^2(\Omega_\PM^\varepsilon)^2} \, , 
    \quad
    \norm{ \varphi}_{L^2(\Sigma)} \leq \ C \varepsilon^{1/2} \norm{\nabla \varphi}_{L^2(\Omega_\PM^\varepsilon)^2} \, .
\end{align}


\section{Cell problems and boundary layer problems\label{appendix:cell-and-boundary-layer-problems}}

In this section, we provide the cell and boundary layer problems that were used in~\citep{Eggenweiler_Rybak_MMS20} to construct the pore-scale approximation~$\{\vecee v^{6,\varepsilon}_\app, p^{6,\varepsilon}_\app\}$ given by~\eqref{eq:approx-6-MMS-v},~\eqref{eq:approx-6-MMS-p}.

The cell problems are defined in the fluid part $Y_\text{f}$ of the unit cell $Y=(0,1)\times (0,1)$ presented in figure~\ref{fig:setting} (middle).
The boundary layer problems are defined on the boundary layer stripe $Z^\bl = Z^+ \cup S \cup Z^-$ (figure~\ref{fig:setting}, right) that is introduced in section~\ref{sec:elimination-D}.
The most important property of boundary layer solutions is their exponentially fast stabilization to some boundary layer constants for $|y_2| \to \infty$. For details, we refer the reader to~\cite[section 3]{Jaeger_Mikelic_96} where this property was proven. 

\subsection{Cell problems}
As a part of the approximation of the pore-scale solution $\{\vecee v^\varepsilon, p^\varepsilon\}$ in section~\ref{sec:derivation}, we consider the following cell problems for $j=1,2$: 
\begin{equation}\label{eq:cell-problems}
    \begin{aligned}
    - \Delta_ {{\vecee y}} \vecee{w}^{j}  &+ \nabla_{{\vecee y}} \pi^j = \vecee 0 \quad \text{in 
    $Y_\text{f}$} \, , 
    \quad 
    \divy \vecee{w}^{j}  =0 \quad \text{in 
    $Y_\text{f}$} \, , 
    \quad \int_{Y_\text{f}} \pi^j \ \text{d} {\vecee y} =0 \, ,
    \\
    \vecee{w}^{j} &=  \vecee{0} \quad \text{on $\partial Y_\text{f} \setminus \partial Y$} \, ,  \quad
    \{ \vecee{w}^{j}, \pi^j \} \text{ is 1-periodic} \text{ in } {\vecee y} \, .
    \end{aligned}
\end{equation}
Problems~\eqref{eq:cell-problems} are well-known in homogenization theory~\citep{Hornung_97,Jaeger_Mikelic_96,Allaire_89} and were also applied in~\cite[eq. (3.3)]{Eggenweiler_Rybak_MMS20} for the pore-scale approximation~$\{\vecee v^{6,\varepsilon}_\app, p^{6,\varepsilon}_\app\}$.
As usual, the velocity is extended to zero in the solid inclusions, i.e., $\vecee w^{j,\varepsilon} = \vecee 0$ on $\Omega_\PM \setminus \Omega_\PM^\varepsilon$. 
The following bounds have been proven in~\cite[eqs. (1.11), (1.12)]{Jaeger_Mikelic_96}:
\begin{align}\label{eq:estimates-cell-problems}
    \| \pi^{j,\varepsilon}\|_{L^2(\Omepspm)} &\leq C \, ,
     \qquad 
     \| \vecee w^{j,\varepsilon}\|_{L^2(\Omepspm)^2} \leq C \, .
\end{align}

\subsection{Cell problems~\cite[eq. (3.42)]{Eggenweiler_Rybak_MMS20}\label{app2.a}}

In order to correct compressibility effects of the velocity error function in~\cite[eq. (3.41)]{Eggenweiler_Rybak_MMS20} the following cell problems are used 
\begin{equation}
\begin{aligned}
\divy \vecee \gamma^{j,i}  &= w_i^j - \frac{k_{ij}}{|Y_\text{f}|} \quad \text{in } Y_\text{f},
\\
 \vecee \gamma^{j,i} = \vecee 0 \quad \text{on } \partial Y_\text{f} \setminus \partial Y,& \quad \vecee \gamma^{j,i} \text{ is 1-periodic in $\vecee y$}.
\end{aligned}\label{eq:cell-problems-gamma}
\end{equation}
Moreover,  $\vecee \gamma^{j,i, \varepsilon}(\vecee x) = \varepsilon \vecee \gamma^{j,i} (\vecee y)$ is set for $\vecee x \in \Omega_\PM^\varepsilon$ and $\vecee \gamma^{j,i, \varepsilon}$ is extended by zero in $\Omega_\PM \setminus \Omega_\PM^\varepsilon$. After~\cite[eqs. (1.21),~(1.22)]{Jaeger_Mikelic_96} we know that
\begin{align}
    \| \vecee \gamma^{j,i, \varepsilon} \|_{L^2(\Omepspm)^2} \leq C \varepsilon \, , 
    \qquad 
    \| \nabla \vecee \gamma^{j,i, \varepsilon} \|_{L^2(\Omepspm)^{2 \times 2}} \leq C  \, .
    \label{eq:estimates-gamma-cell}
\end{align}

\subsection{Boundary layer problem~\cite[eq. (3.14)]{Eggenweiler_Rybak_MMS20}}
The first boundary layer problem introduced in~\cite[eq. (3.14)]{Eggenweiler_Rybak_MMS20} corrects the pore-scale approximation in the free-flow region and is given by
\begin{equation}\label{eq:BLP-t}
    \begin{aligned}
    -\Delta_{\vecee y} \vecee t^{\bl} + \nabla_{\vecee y} s^{\bl} &= \vecee0 \quad \text{in } Z^{+}  \cup Z^{-} \, , 
    \\
    \divy \vecee t^{\bl} &= 0 \quad \text{in } Z^{+}  \cup Z^{-} \, ,  
    \\ 
    \big\llbracket \vecee t^{\bl} \big\rrbracket_S &= \vecee 0 \quad \text{on } S \, , 
    \\
    \big\llbracket ( \nabla_{\vecee y} \vecee t^{\bl} - s^{\bl} \ten{I} ) \vecee{e}_2\big\rrbracket_S &=  \vecee{e}_1 \hspace{1.5ex} \text{on } S \, , 
    \\
    \vecee t^{\bl} = \vecee 0 \quad \text{on } \cup_{k=1}^{\infty}(\partial Y_\text{s} - (0,k)) \, ,& \quad \{\vecee t^{\bl}, s^{\bl}\} \text{ is 1-periodic in $y_1$} \, . 
    \end{aligned} 
\end{equation}
The boundary layer constants to which the velocity $\vecee t^{\bl}$ and pressure $s^{\bl}$ stabilize exponentially fast for $y_2 \to \infty$ are defined as follows
\begin{align}\label{eq:BLP-constants-N}
    \vecee N^{\bl} &= \left(N_1^{\bl}, 0\right)^\top = \bigg(\int_S  t_1^{\bl}(y_1, +0) \ \text{d} y_1, 0\bigg)^\top \, , 
    \qquad
    N_{s}^{\bl} = \int_S s^{\bl}(y_1, +0) \ \text{d} y_1 \, .
\end{align}
In \cite[eqs. (1.67)--(1.69)]{Jaeger_Mikelic_96} it is shown that the following estimates hold
\begin{equation}\label{eq:estimates-t}
\begin{aligned}
     \| \vecee t^{\bl, \varepsilon} - \mathcal{H}(x_2)\vecee N^\bl \|_{L^2(\Omega)^2} &\leq C \varepsilon^{1/2}  \, ,
     \qquad 
     \| \nabla \vecee t^{\bl, \varepsilon} \|_{L^2(\Omega)^{2\times 2}} \leq C \varepsilon^{-1/2} \, ,
     \\
     \| s^{\bl, \varepsilon} - \mathcal{H}(x_2)N_s^\bl \|_{L^2(\Omega^\varepsilon)} &\leq C \varepsilon^{1/2}  \, .
\end{aligned}
\end{equation}
For further details, regularity results, and estimates, we refer to~\cite[section 3.2.2]{Eggenweiler_Rybak_MMS20} and~\cite[section 1.2.7]{Jaeger_Mikelic_96}.

\subsection{Boundary layer problem~\cite[eq. (3.25)]{Eggenweiler_Rybak_MMS20}}
The second boundary layer correctors that are used in the pore-scale approximation~$\{\vecee v^{6,\varepsilon}_\app, p^{6,\varepsilon}_\app\}$ are obtained from the solutions to the following problem
\begin{equation}\label{eq:BLP-beta}
    \begin{aligned}
    -\Delta_{\vecee y} \vecee \beta^{j,\bl} + \nabla_{\vecee y} \omega^{j,\bl} &= \vecee0  \quad \text{in } Z^{+}  \cup Z^{-} \, , 
    \\ 
    \divy \vecee \beta^{j,\bl} &= 0 \quad \text{in } Z^{+}  \cup Z^{-} \, , 
    \\
    \big\llbracket \vecee \beta^{j,\bl} \big\rrbracket_S &= k_{2j}\vecee{e}_2 - \vecee w^j \quad \text{on } S \, , 
    \\
    \big\llbracket ( \nabla_{\vecee y} \vecee \beta^{j,\bl} - \omega^{j,\bl} \ten{I} ) \vecee{e}_2\big\rrbracket_S &= - \left( \nabla_{\vecee y} \vecee w^{j} - \pi^j \ten I \right) \vecee{e}_2 \quad \text{on } S \, , 
    \\
    \vecee \beta^{j,\bl} = \vecee 0 \quad \text{on } \cup_{k=1}^{\infty}(\partial Y_\text{s} - (0,k)) \, ,& \qquad \{\vecee \beta^{j,\bl}, \omega^{j,\bl}\} \text{ is 1-periodic in $y_1$} \, . 
    \end{aligned}
\end{equation}
The corresponding boundary layer constants are given by
\begin{align}\label{eq:BLP-constants-M}
    \vecee{M}^{j,\bl}= \left(M_1^{j,\bl}, 0\right)^\top &= \bigg(\int_S \beta_1^{j, \bl}(y_1, +0) \ \text{d} y_1, 0\bigg)^\top \, ,  
    \quad \
    M_{\omega}^{j,\bl} = \int_S \omega^{j,\bl}(y_1, +0) \ \text{d} y_1  \, . 
\end{align}
For further details on the construction and properties of the solution $\{\vecee \beta^{j,\bl}, \omega^{j,\bl}\}$, we refer to~\cite[section 3.2.3]{Eggenweiler_Rybak_MMS20} and~\cite[section 1.2.3]{Jaeger_Mikelic_96}.

\subsection{Boundary layer problem~\cite[eq. (3.32)]{Eggenweiler_Rybak_MMS20}}
The boundary layer problem correcting the values of the velocity error function and its gradient on the lower boundary reads
\begin{equation}\label{eq:BLP-q}
    \begin{aligned}
    -\Delta_{\vecee y} \vecee q^{j,\bl} + \nabla_{\vecee y} z^{j,\bl} &= \vecee 0  \quad \text{in } Z^{-} \, , 
    \\ 
    \divy \vecee q^{j,\bl} &= 0 \quad \text{in }  Z^{-} \, , 
    \\
    q_2^{j,\bl}  &= k_{2j} - w_2^j \quad \text{on } S \, , 
    \\
    \frac{\partial q_1^{j,\bl}}{\partial y_2} &= - \frac{\partial w_1^{j}}{\partial y_2} \quad \hspace*{2.5ex} \text{on } S \, , 
    \\
    \vecee q^{j,\bl} = \vecee 0 \quad \text{on } \cup_{k=1}^{\infty}(\partial Y_\text{s} - (0,k)) \, ,& \qquad \{\vecee q^{j,\bl}, z^{j,\bl}\} \text{ is 1-periodic in $y_1$} \, . 
    \end{aligned}
\end{equation}
Following the theory from~\cite{Jaeger_Mikelic_96} and the results from~\cite{Carraro_etal_15}, problem~\eqref{eq:BLP-q} is of boundary layer type. 
Thus, we have
\begin{align}
    \| \vecee q^{j,\bl,\varepsilon} \|_{L^2(\Omeps)^2} \leq C \varepsilon^{1/2} \, , 
    \quad 
    \| z^{j,\bl,\varepsilon} \|_{L^2(\Omeps)} \leq C \varepsilon^{1/2} \, .
    \label{eq: estimates-q-z}
\end{align}

\subsection{Boundary layer problem~\cite[eq. (3.43)]{Eggenweiler_Rybak_MMS20}}
To correct the divergence of the velocity error function, the following problem is used
\begin{equation}
\begin{aligned}
    -\Delta_{\vecee y} \vecee \gamma^{j,i,\bl} + \nabla_{\vecee y} \pi^{j,i,\bl} &= \vecee0 \quad \text{in } Z^{+}  \cup Z^{-} \, , 
    \\
    \divy \vecee \gamma^{j,i,\bl } &= 0 \quad \text{in } Z^{+}  \cup Z^{-}, 
    \\ 
    \big\llbracket \vecee \gamma^{j,i,\bl} \big\rrbracket_S &= \vecee \gamma^{j,i} \hspace{6.75ex} \text{on } S \, ,
    \\
    \big\llbracket ( \nabla_{\vecee y} \vecee \gamma^{j,i,\bl} - \pi^{j,i,\bl} \ten{I} ) \vecee{e}_2\big\rrbracket_S &=  \nabla_{\vecee y } \vecee \gamma^{j,i} \vecee e_2  \quad \text{on } S \, ,
    \\
    \vecee \gamma^{j,i,\bl} = \vecee 0 \quad \text{on } \cup_{k=1}^{\infty}(\partial Y_\text{s} - (0,k)),& \quad \{\vecee \gamma^{j,i,\bl}, \pi^{j,i,\bl}\} \text{ is 1-periodic in $y_1$} \, .
\end{aligned}  \label{eq:BLP-gamma}
\end{equation}
We set $\vecee \gamma^{j,i,\bl,\varepsilon}(\vecee x) = \varepsilon \vecee \gamma^{j,i,\bl}(\vecee y), \ \pi^{j,i,\bl,\varepsilon}(\vecee x) = \pi^{j,i,\varepsilon}(\vecee y)\text{ for } \vecee x \in \Omega^\varepsilon
$
and extend $\vecee \gamma^{j,i,\bl,\varepsilon}$ by zero for $\vecee x \in \Omega \setminus \Omega^\varepsilon$. 
The corresponding boundary layer constants
are given by
\begin{align}\label{eq:BLP-constants-gamma}
    \vecee{C}^{j,i,\bl} &= \bigg(\int_S \gamma_1^{j,i, \bl}(y_1, +0) \ \text{d} y_1, 0\bigg)^\top \, ,  
    \quad \
    C_{\pi}^{j,i,\bl} = \int_S \pi^{j,i,\bl}(y_1, +0) \ \text{d} y_1  \, . 
\end{align}
Estimates and further properties of the solution to problem~\eqref{eq:BLP-gamma} are presented in~\citep{Eggenweiler_Rybak_MMS20,Jaeger_Mikelic_96}.

\subsection{Boundary layer problem~\cite[eq. (3.44)]{Eggenweiler_Rybak_MMS20}}
Boundary layer problem~(3.44) from~\citep{Eggenweiler_Rybak_MMS20} also corrects compressibility effects of the velocity error. 
This problem reads
\begin{equation}
    \begin{aligned} \label{eq:BLP-zeta}
    \divy \vecee \zeta^{\bl} &=  t_1^{\bl} (\vecee y) - \mathcal{H}(x_2)N_1^{\bl} \quad \text{ in } Z^+ \cup Z^- \, ,
    \\
    \llbracket \vecee \zeta^{\bl} \rrbracket_S &= - \left( \int_{Z^{\bl}}  (t_1^{\bl} (\vecee y) - \mathcal{H}(x_2)N_1^{\bl}) \ \text{d} \vecee y \right) \vecee e_2 \quad \text{ on } S \, , 
    \\
    \vecee \zeta^{\bl} &= \vecee 0 \quad \text{ on } \cup_{k=1}^{\infty}(\partial Y_\text{s} - (0,k)), \quad \vecee \zeta^{\bl} \text{ is 1-periodic in $y_1$} \, .
    \end{aligned}
\end{equation}
After~\cite[Proposition 3.20]{Jaeger_Mikelic_96} problem~\eqref{eq:BLP-zeta} has at least one solution $\vecee \zeta^{\bl} \in H^1(Z^+ \cup Z^-)^2 \cap C_\text{loc}(Z^+ \cup Z^-)^2$. 
%
We introduce the boundary layer constant $W^\bl$ by
\begin{align}\label{eq:BLP-constant-W}
\llbracket \vecee \zeta^{\bl} \rrbracket_S = - \left(\int_{Z^{\bl}} (t_1^{\bl} - \mathcal{H}(x_2) N_1^{\bl}) \, \text{d} \vecee y  \right) \vecee e_2
= - \left( \int_{Z^-} t_1^{\bl} \, \text{d} \vecee y \right) \vecee e_2 =\colon W^\bl \vecee e_2 
\, .
\end{align}
Furthermore, we have
\begin{align}
    \llbracket \nabla_{\vecee y} \vecee \zeta^{\bl} \vecee e_2 \rrbracket_S
    = \bigg\llbracket\frac{\partial}{\partial y_2} \vecee \zeta^{\bl}   \bigg\rrbracket_S  
    &= - N_1^{\bl}\vecee e_2 + \big\llbracket \operatorname{curl} \zeta_1^{\bl}
    \big\rrbracket_S \, ,
\end{align}
where we applied~\eqref{eq:BLP-zeta} 
and used continuity of boundary layer velocity from~\eqref{eq:BLP-t} across the interface. 
Now, we make use of the construction of such boundary layer correctors (see~\cite[Proposition 3.20]{Jaeger_Mikelic_96}).
We know that $\vecee \zeta^{\bl} = \nabla \nu + \operatorname{curl} \theta$, where $\nu$ is the unique (up to a constant) solution to a problem in analogy to (3.69) from~\cite{Jaeger_Mikelic_96} and $\theta$ is the solution to a problem analogous to~(3.71) in the same reference~\citep{Jaeger_Mikelic_96}.
These facts lead to
\begin{align}
    \llbracket \nabla_{\vecee y} \vecee \zeta^{\bl} \vecee e_2 \rrbracket_S
    &= - N_1^{\bl}\vecee e_2 
    \quad \text{ on } S \, .
    \label{eq:zeta-gradient-jump}
\end{align}
Moreover, setting $\vecee \zeta^{\bl,\varepsilon}(\vecee x) = \vecee \zeta^{\bl} (\vecee y)$ for $\vecee x \in \Omeps$ and extending the velocity by zero in the solid inclusions, we know from~\citep{Jaeger_Mikelic_96}  that
\begin{align}\label{eq:estimates-zeta}
     \| \vecee \zeta^{\bl,\varepsilon} \|_{L^2(\Omega)^2} \leq C \varepsilon^{1/2} \, .
\end{align}

\subsection{Boundary layer problem~\cite[eq. (3.49)]{Eggenweiler_Rybak_MMS20}}
The last boundary layer problem for the correction of compressibility effects from~\citep{Eggenweiler_Rybak_MMS20} has the following form
\begin{equation}\label{eq:BLP-Z}
\begin{aligned}
     \divy \vecee Z^{j,\bl} &=  q_1^{j,\bl}\quad \text{in }  Z^- ,
     \\
     \llbracket 
     \vecee Z^{j,\bl}  \rrbracket_S 
     &= - \left( \int_{Z^-} q_1^{j,\bl} \ \text{d} \vecee y\right) \vecee e_2 \quad  \text{on } S ,\\
     \vecee Z^{j,\bl} &= \vecee 0 \quad \text{ on } \cup_{k=1}^{\infty}(\partial Y_\text{s} - (0,k)), \quad \vecee Z^{j,\bl} \text{ is 1-periodic in $y_1$}.
\end{aligned}
\end{equation}
The existence of a solution to~\eqref{eq:BLP-Z}, its exponential decay, 
and the usual $L^2$-estimates for boundary layer velocity 
follow from the theory developed by~\cite{Jaeger_Mikelic_96}.

\section{Reformulation of interface conditions for numerical simulations}
\label{appendix:reformulated-IC}
In section~\ref{sec:numerics}, where numerical simulation results are presented, we consider an isotropic porous medium. Recall that in this case the constants $N_s^\bl=M_\omega^{1,\bl}=M_1^{2,\bl}=E_1^\bl=L_1^\bl=0$. 
We modify the higher-order interface conditions~\eqref{eq:IC-HO-normal}--\eqref{eq:IC-HO-tangential} as follows
\begin{eqnarray}
    v_2^\FF     &=&  v_2^\PM  
    +\varepsilon \frac{W^\bl}{N_1^\bl} \frac{\partial v_1^\FF}{\partial x_1}\bigg|_\Sigma 
    \, ,
    \label{eq:IC-HO-normal-reform}
    \\
    p^\PM &=& p^\FF - \frac{\partial v_2^\FF}{\partial x_2}\bigg|_\Sigma 
    - \varepsilon M_\omega^{2,\bl} \frac{\partial p^\PM }{\partial x_2}\bigg|_\Sigma
    - \frac{1}{N_1^\bl} \left( L_\eta^\bl
     + E_b^\bl +  N_1^\bl \right)  \frac{\partial v_1^\FF}{\partial x_1}\bigg|_\Sigma 
     \, ,
     \label{eq:IC-HO-p-reform}
     \\
     v_1^\FF &=&  - \varepsilon N_1^\bl  \frac{\partial v_1^\FF}{\partial x_2}\bigg|_\Sigma 
    +\varepsilon^2 M_1^{1,\bl} \frac{\partial p^\PM }{\partial x_1}\bigg|_\Sigma 
    \label{eq:IC-HO-tangential-reform}
    \, .
\end{eqnarray}
Condition~\eqref{eq:IC-HO-tangential-reform} is condition~\eqref{eq:IC-HO-tangential} for $M_1^{2,\bl}=E_1^\bl=L_1^\bl=0$. 
We use~\eqref{eq:IC-HO-tangential-reform} to replace $\partial v_1^\FF / \partial x_2|_\Sigma$  in conditions~\eqref{eq:IC-HO-normal} and~\eqref{eq:IC-HO-p}. Then, we neglect terms of order $\varepsilon^3$ for the velocity and $\varepsilon^2$ for the pressure. As a result, we obtain interface conditions~\eqref{eq:IC-HO-normal-reform} and~\eqref{eq:IC-HO-p-reform} which we implemented in our \textsc{FreeFEM++} code.
Although we eliminated some of the higher-order terms from~\eqref{eq:IC-HO-normal}--\eqref{eq:IC-HO-tangential} to obtain~\eqref{eq:IC-HO-normal-reform}--\eqref{eq:IC-HO-tangential-reform}, we note that the modified conditions~\eqref{eq:IC-HO-normal-reform}--\eqref{eq:IC-HO-tangential-reform} still incorporate higher-order contributions.
We also demonstrated that these modified interface conditions provide more accurate simulation results for Stokes--Darcy problems than existing coupling concepts.

\renewcommand{\thesection}{\appendixname\ \Alph{section}}

\section*{Acknowledgements}
\vspace{-1ex}
\noindent The authors thank Joscha Nickl for valuable discussions related to boundary layer theory. Moreover, the authors express their gratitude to U\v{g}is L\={a}cis for his helpful tips on implementing the macroscale coupled Stokes--Darcy models in \textsc{FreeFEM++}.
\section*{Funding}
\vspace{-1ex}
\noindent The work is funded by the Deutsche Forschungsgemeinschaft (DFG, German Research Foundation) -- Project Number 490872182 and Project Number 327154368~-- SFB 1313.
\vspace{-2ex}
\section*{Data availability}
\vspace{-1ex}
\noindent The datasets generated and analysed during the current study are 
available from the corresponding author on reasonable request.
\vspace{-2ex}
\section*{Declaration of interest}
\vspace{-1ex}
\noindent The authors have no competing interests to declare that are relevant to the content of this paper.
\vspace{-2ex}
\section*{Declaration of generative AI and AI-assisted technologies in the writing process}
\vspace{-1ex}
\noindent During the preparation of this work, the authors did not use any generative AI and AI-assisted technologies.


\begin{thebibliography}{10}
\expandafter\ifx\csname url\endcsname\relax
  \def\url#1{\texttt{#1}}\fi
\expandafter\ifx\csname urlprefix\endcsname\relax\def\urlprefix{URL }\fi
\expandafter\ifx\csname href\endcsname\relax
  \def\href#1#2{#2} \def\path#1{#1}\fi

\bibitem{Gurau_Mann_2009}
V.~Gurau, J.~A. Mann, A critical overview of computational fluid dynamics
  multiphase models for proton exchange membrane fuel cells, SIAM J. Appl.
  Math. 70~(2) (2009) 410--454.
\newblock \href {https://doi.org/10.1137/080727993}
  {\path{doi:10.1137/080727993}}.

\bibitem{Smith_Humphrey_2007}
J.~H. Smith, J.~A.~C. Humphrey, Interstitial transport and transvascular fluid
  exchange during infusion into brain and tumor tissue, Microvasc. Res. 73~(1)
  (2007) 58--73.
\newblock \href {https://doi.org/10.1016/j.mvr.2006.07.001}
  {\path{doi:10.1016/j.mvr.2006.07.001}}.

\bibitem{Weishaupt_etal_2019}
K.~Weishaupt, V.~Joekar-Niasar, R.~Helmig, An efficient coupling of free flow
  and porous media flow using the pore-network modeling approach, J. Comput.
  Phys.: X 1 (2019) 100011.
\newblock \href {https://doi.org/10.1016/j.jcpx.2019.100011}
  {\path{doi:10.1016/j.jcpx.2019.100011}}.

\bibitem{Helmig_97}
R.~Helmig, Multiphase flow and transport processes in the subsurface: a
  contribution to the modeling of hydrosystems, Springer, 1997.

\bibitem{Vreugdenhil_1994}
C.~B. Vreugdenhil, {Numerical methods for shallow-water flow}, Kluwer Academic
  Publishers, Dordrecht, 1994.

\bibitem{Darcy_1856}
H.~Darcy, Les fontaines publiques de la ville de {D}ijon, Dalmont, Paris, 1856.

\bibitem{Brinkman_47}
H.~C. Brinkman, A calculation of the viscous force exerted by a flowing fluid
  on a dense swarm of particles, Appl. Sci. Res. 1~(27) (1949) 27--34.
\newblock \href {https://doi.org/10.1007/BF02120313}
  {\path{doi:10.1007/BF02120313}}.

\bibitem{Forchheimer_1901}
P.~Forchheimer, Wasserbewegung durch {B}oden, Z. Ver. Deutsch. Ing. 45 (1901)
  1782--1788.

\bibitem{Richards_31}
L.~A. Richards, Capillary conduction of liquids through porous mediums, J.
  Appl. Phys. 1~(5) (1931) 318--333.
\newblock \href {https://doi.org/10.1063/1.1745010}
  {\path{doi:10.1063/1.1745010}}.

\bibitem{Angot_etal_17}
P.~Angot, B.~Goyeau, J.~A. Ochoa-Tapia, Asymptotic modeling of transport
  phenomena at the interface between a fluid and a porous layer: jump
  conditions, Phys. Rev. E 95~(6) (2017) 063302.
\newblock \href {https://doi.org/10.1103/physreve.95.063302}
  {\path{doi:10.1103/physreve.95.063302}}.

\bibitem{Beavers_Joseph_67}
G.~S. Beavers, D.~D. Joseph, Boundary conditions at a naturally permeable wall,
  J. Fluid Mech. 30~(1) (1967) 197--207.
\newblock \href {https://doi.org/10.1017/S0022112067001375}
  {\path{doi:10.1017/S0022112067001375}}.

\bibitem{Discacciati_GerardoGiorda_18}
M.~Discacciati, L.~Gerardo-Giorda, Optimized {S}chwarz methods for the
  {S}tokes--{D}arcy coupling, IMA J. Numer. Anal. 38~(4) (2018) 1959--1983.
\newblock \href {https://doi.org/10.1093/imanum/drx054}
  {\path{doi:10.1093/imanum/drx054}}.

\bibitem{Eggenweiler_Rybak_MMS20}
E.~Eggenweiler, I.~Rybak, Effective coupling conditions for arbitrary flows in
  {S}tokes--{D}arcy systems, Multiscale Model. Simul. 19~(2) (2021) 731--757.
\newblock \href {https://doi.org/10.1137/20M1346638}
  {\path{doi:10.1137/20M1346638}}.

\bibitem{Jaeger_Mikelic_96}
W.~J\"{a}ger, A.~Mikeli\'{c}, On the boundary conditions at the contact
  interface between a porous medium and a free fluid, Ann. Scuola Norm. Sup.
  Pisa Cl. Sci. 23~(3) (1996) 403--465.

\bibitem{Lacis_Bagheri_17}
U.~L\={a}cis, S.~Bagheri, A framework for computing effective boundary
  conditions at the interface between free fluid and a porous medium, J. Fluid
  Mech. 812 (2017) 866--889.
\newblock \href {https://doi.org/10.1017/jfm.2016.838}
  {\path{doi:10.1017/jfm.2016.838}}.

\bibitem{Eggenweiler_Rybak_20}
E.~Eggenweiler, I.~Rybak, Unsuitability of the {B}eavers--{J}oseph interface
  condition for filtration problems, J. Fluid Mech. 892 (2020) A10.
\newblock \href {https://doi.org/10.1017/jfm.2020.194}
  {\path{doi:10.1017/jfm.2020.194}}.

\bibitem{Jones_73}
I.~P. Jones, Low {R}eynolds number flow past a porous spherical shell, Proc.
  Camb. Phil. Soc. 73~(1) (1973) 231--238.
\newblock \href {https://doi.org/10.1017/S0305004100047642}
  {\path{doi:10.1017/S0305004100047642}}.

\bibitem{Nield_09}
D.~A. Nield, The {B}eavers--{J}oseph boundary condition and related matters: a
  historical and critical note, Transp. Porous Media 78 (2009) 537--540.
\newblock \href {https://doi.org/10.1007/s11242-009-9344-y}
  {\path{doi:10.1007/s11242-009-9344-y}}.

\bibitem{Saffman}
P.~G. Saffman, On the boundary condition at the surface of a porous medium,
  Stud. Appl. Math. 50~(2) (1971) 93--101.
\newblock \href {https://doi.org/10.1002/sapm197150293}
  {\path{doi:10.1002/sapm197150293}}.

\bibitem{Strohbeck_etal_2021}
P.~Strohbeck, E.~Eggenweiler, I.~Rybak,
  \href{https://doi.org/10.1007/s11242-023-01919-3}{A modification of the
  {B}eavers--{J}oseph condition for arbitrary flows to the fluid-porous
  interface}, Transp. Porous Med. 147~(3) (2023) 605--628.
\newblock \href {https://doi.org/10.1007/s11242-023-01919-3}
  {\path{doi:10.1007/s11242-023-01919-3}}.
\newline\urlprefix\url{https://doi.org/10.1007/s11242-023-01919-3}

\bibitem{Sudhakar_etal_2021}
Y.~Sudhakar, U.~Lacis, S.~Pasche, S.~Bagheri, Higher-order homogenized boundary
  conditions for flows over rough and porous surfaces, Transp. Porous Media 136
  (2021) 1--42.
\newblock \href {https://doi.org/10.1007/s11242-020-01495-w}
  {\path{doi:10.1007/s11242-020-01495-w}}.

\bibitem{Angot_2010}
P.~Angot, A fictitious domain model for the {S}tokes/{B}rinkman problem with
  jump embedded boundary conditions, C. R. Math. Acad. Sci. Paris 348~(11-12)
  (2010) 697--702.
\newblock \href {https://doi.org/10.1016/j.crma.2010.04.022}
  {\path{doi:10.1016/j.crma.2010.04.022}}.

\bibitem{Carraro_etal_15}
T.~Carraro, C.~Goll, A.~Marciniak-Czochra, A.~Mikeli\'{c}, Effective interface
  conditions for the forced infiltration of a viscous fluid into a porous
  medium using homogenization, Comput. Methods Appl. Mech. Engrg. 292 (2015)
  195--220.
\newblock \href {https://doi.org/10.1016/j.cma.2014.10.050}
  {\path{doi:10.1016/j.cma.2014.10.050}}.

\bibitem{Jaeger_etal_01}
W.~J\"{a}ger, A.~Mikeli\'{c}, N.~Neuss, Asymptotic analysis of the laminar
  viscous flow over a porous bed, SIAM J. Sci. Comput. 22~(6) (2001)
  2006--2028.
\newblock \href {https://doi.org/10.1137/S1064827599360339}
  {\path{doi:10.1137/S1064827599360339}}.

\bibitem{Jaeger_Mikelic_00}
W.~J\"{a}ger, A.~Mikeli\'{c}, On the interface boundary condition of {B}eavers,
  {J}oseph, and {S}affman, SIAM J. Appl. Math. 60~(4) (2000) 1111--1127.
\newblock \href {https://doi.org/10.1137/S003613999833678X}
  {\path{doi:10.1137/S003613999833678X}}.

\bibitem{Hornung_97}
U.~Hornung, Homogenization and porous media, Springer, 1997.
\newblock \href {https://doi.org/10.1007/978-1-4612-1920-0}
  {\path{doi:10.1007/978-1-4612-1920-0}}.

\bibitem{Hecht_2012}
F.~Hecht, New development in \textsc{{F}ree{F}em++}, J. Numer. Math. 20 (2012)
  251--265.
\newblock \href {https://doi.org/10.1515/jnum-2012-0013}
  {\path{doi:10.1515/jnum-2012-0013}}.

\bibitem{Rybak_etal_21}
I.~Rybak, C.~Schwarzmeier, E.~Eggenweiler, U.~R\"{u}de, Validation and
  calibration of coupled porous-medium and free-flow problems using pore-scale
  resolved models, Comput. Geosci. 25~(2) (2021) 621--635.
\newblock \href {https://doi.org/10.1007/s10596-020-09994-x}
  {\path{doi:10.1007/s10596-020-09994-x}}.

\bibitem{Espedal_98}
M.~Espedal, A.~Fasano, A.~Mikeli\'{c}, Filtration in porous media and
  industrial application, Springer, 1998.

\bibitem{Allaire_89}
G.~Allaire, Homogenization of the {S}tokes flow in a connected porous medium,
  Asymptotic Anal. 2~(3) (1989) 203--222.
\newblock \href {https://doi.org/10.3233/ASY-1989-2302}
  {\path{doi:10.3233/ASY-1989-2302}}.

\end{thebibliography}

\end{document}